\newtheorem{theorem}{Theorem}[section]
\newtheorem{lemma}[theorem]{Lemma}
\newtheorem{conjecture}[theorem]{Conjecture}
\newtheorem{problem}[theorem]{Problem}
\newtheorem{corollary}[theorem]{Corollary}
\newtheorem{proposition}[theorem]{Proposition}
\theoremstyle{definition}
\newtheorem{definition}[theorem]{Definition}
\newtheorem{example}[theorem]{Example}
\theoremstyle{remark}
\newtheorem{remark}[theorem]{Remark}
\newtheorem{remarks}[theorem]{Remarks}
\numberwithin{equation}{section}
\renewcommand{\theequation}{\arabic{section}-\arabic{equation}}
\def\wt{\widetilde}
\def\ol{\overline}
\def\ra{\rightarrow}
\def\lra{\longrightarrow}
\def\({$($}
\def\){$)$}
\def\chit{\chi_{\rm top}}
\def\bbp{\mathbb P}
\def\Pic{\text{{\rm Pic\,}}}
\def\rank{\text{{\rm rank\,}}}
\begin{document}

\title[On the slope of hyperelliptic fibrations]{On the slope of hyperelliptic fibrations with positive relative irregularity}

\author{Xin Lu}
\address{Department of Mathematics, East China Normal University, Shanghai, China, 200241}
\curraddr{Institut f\"ur Mathematik, Universit\"at Mainz, Mainz, Germany, 55099}
\email{lvxinwillv@gmail.com}
\thanks{This work is supported by SFB/Transregio 45 Periods, Moduli Spaces and Arithmetic of Algebraic Varieties of the DFG (Deutsche Forschungsgemeinschaft),
and partially supported by National Key Basic Research Program of China (Grant No. 2013CB834202).}

\author{Kang Zuo}
\address{Institut f\"ur Mathematik, Universit\"at Mainz, Mainz, Germany, 55099}
\email{zuok@uni-mainz.de}

\subjclass[2010]{Primary 14D06, 14H10; Secondary 14D99, 14J29}

\date{January, 29, 2015}


\keywords{Fibrations, slope inequality, relative irregularity}

\phantomsection
\begin{abstract}
\addcontentsline{toc}{section}{Abstract}
Let $f:\,S \to B$ be a locally non-trivial relatively minimal fibration
of hyperelliptic curves of genus $g\geq 2$
with relative irregularity $q_f$.
We show a sharp lower bound on the slope $\lambda_f$ of $f$.
As a consequence, we prove
a conjecture of Barja and Stoppino on the lower bound of $\lambda_f$
as an increasing function of $q_f$ in this case,
and we also prove a conjecture of Xiao
on the ampleness of the direct image of the relative canonical sheaf if $\lambda_f<4$.
\end{abstract}

\maketitle


\section{Introduction}\label{introduction}
Let $f:\,S\to B$ be a fibration (or a family) of curves of genus $g\geq2$,
i.e., $f$ is a proper surjective morphism from a smooth complete surface $S$ to a smooth complete curve $B$ with connected fibers over complex number,
and the general fiber is a smooth complete curve of genus $g$.
If the general fiber is a hyperelliptic curve,
then we call $f$ a {\it hyperelliptic fibration}.
The fibration $f$ is called {\it relatively minimal},
if there is no $(-1)$-curve contained in the fibers of $f$.
Here a curve $C$ is called a {\it $(-k)$-curve}
if it is a smooth rational curve with self-intersection $C^2=-k$.
Without other statements, we always assume that fibrations in this paper
are relatively minimal.
The fibration $f$ is called {\it smooth} if all its fibers are smooth,
{\it isotrivial} if all its smooth fibers are isomorphic to each other,
{\it locally trivial} if it is both smooth and isotrivial,
and {\it semi-stable} if all its singular fibers are semi-stable.
Here a singular fiber $F$ of
$f$ is called {\it semi-stable} if it is a reduced nodal curve.

Let $\omega_S$ (resp. $K_S$) be the canonical sheaf (resp. the canonical divisor) of $S$.
Denote by $\omega_{S/B}=\omega_S\otimes f^*\omega_B^{\vee}$ (resp. $K_f=K_{S/B}=K_S-f^*K_B$) the
relative canonical sheaf (resp. the relative canonical divisor) of $f$.
If $f$ is relatively minimal, $K_f$ is numerical effective (nef),
i.e., $K_f\cdot C \geq 0$ for any curve $C\subseteq S$.
Set $b=g(B)$, $p_g=h^0(S,\,\omega_S)$, $q=h^1(S,\,\omega_S)$,
$\chi(\mathcal O_S)=p_g-q+1$, and let $\chit(S)$ be the topological Euler characteristic of $S$.
We consider the following relative invariants of $f$:
$$\begin{aligned}
\chi_f&=\deg f_*\omega_{S/B}=\chi(\mathcal O_S)-(g-1)(b-1),\\
K_f^2&=\omega_{S/B}\cdot \omega_{S/B}=K_S^2-8(g-1)(b-1),\\
e_f&=\chit(S)-4(g-1)(b-1),
\end{aligned}$$
They satisfy the Noether's formula:
\begin{equation}\label{formulanoether}
12\chi_f=K_f^2+e_f.
\end{equation}
If $f$ is relatively minimal, then these invariants are nonnegative,
and $\chi_f=0$ (equivalently, $K_f^2=0$)
if and only if $f$ is locally trivial (see \cite{arakelov}).
Note also that $e_f=0$ iff $f$ is smooth.

The {\it relative irregularity} $q_f$ of $f$ is defined to be
$$q_f=q-b.$$
It is clear that $0\leq q_f \leq g$.
The equality $q_f=g$ holds if and only if $S$ is birational to $B\times F$ by \cite{beaville82}.
And for $b\geq 1$, $q_f=0$ if and only if $f$ is the Albanese map of $S$.

If $f$ is not locally trivial, the {\it slope} of $f$ is defined to be
$$\lambda_f=\frac{K_f^2}{\chi_f}.$$
It follows immediately that $0< \lambda_f\leq 12$.
It turns out that the slope of a fibration is sensible to a lot of geometric
properties, both of the fibers of $f$ and of the surface $S$ itself (cf. \cite{ak00}).
We are mainly concerned with a lower bound of the slope. The main known
result in this direction is the slope inequality:
\begin{equation}\label{slopeinequality}
\text{If $g\geq 2$ and $f$ is not locally trivial, then $\lambda_f\geq \frac{4(g-1)}{g}$.}
\end{equation}
It was first proven by Horikawa
and Persson for hyperelliptic fibrations. Xiao gave a proof for general fibrations
(cf. \cite{xiao87}), and independently, Cornalba and Harris proved it for semi-stable fibrations
(cf. \cite{ch88}).

We would like to pay attention to the influence of the relative irregularity $q_f$
on the slope $\lambda_f$ of $f$.
It seems that the lower bound of $\lambda_f$ should be an increasing function of $q_f$.
The main influence of $q_f$ is the following Fujita decomposition
(cf. \cite{fujita78,fujita78b,kollar87}, see also Catanese-Dettweiler's recent paper \cite{cd14} for more detailed proof of the decomposition and discussion on the unitary factor):
\begin{equation}\label{fujitadecom}
f_*\omega_{S/B}=\mathcal A \oplus \mathcal F \oplus \mathcal O_B^{\oplus q_f},
\end{equation}
with $\mathcal A$ ample, $\mathcal F$ unitary and $\dim H^1(B, \Omega^1_B(\mathcal F))=0$.
The first result in this direction is due to Xiao (\cite{xiao87}):
\begin{equation*}
\text{If $q_f > 0$, then $\lambda_f\geq 4$ and the equality holds only if $q_f=1$.}
\end{equation*}
In particular, $q_f=0$ if $\lambda_f<4$.
He made the following conjecture (\cite[Conjecture 2]{xiao87}):
\begin{conjecture}[Xiao]\label{conjofxiao}
For any locally non-trivial fibration $f$,
$f_*\omega_{S/B}$ has no locally free quotient of degree zero
(i.e., $f_*\omega_{S/B}$ is ample) if $\lambda_f<4$.
\end{conjecture}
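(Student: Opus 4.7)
The plan is to split Xiao's Conjecture \ref{conjofxiao} into two cases dictated by the Fujita decomposition \eqref{fujitadecom}, disposing of each case by appeal to the paper's main slope bound. Assume throughout that $f$ is a locally non-trivial hyperelliptic fibration with $\lambda_f<4$, and write
$$f_*\omega_{S/B}=\mathcal A\oplus\mathcal F\oplus\mathcal O_B^{\oplus q_f}$$
as in \eqref{fujitadecom}, with $\mathcal A$ ample and $\mathcal F$ strictly unitary (no trivial summand). The sheaf $f_*\omega_{S/B}$ is semipositive, so it fails to be ample precisely when either $q_f\geq 1$ or $\mathcal F\neq 0$; I must rule out both scenarios.

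If $q_f\geq 1$, the paper's main theorem — the sharp slope bound for hyperelliptic fibrations as an increasing function of $q_f$ — already yields $\lambda_f\geq 4$, contradicting $\lambda_f<4$. So I may assume $q_f=0$ but $\mathcal F\neq 0$. My strategy here is to pass to a finite étale base change $\pi:\widetilde B\to B$ of some degree $d$ for which $\pi^*\mathcal F$ acquires a nonzero trivial direct summand. The pulled-back fibration $\widetilde f:\widetilde S\to\widetilde B$ is then again a locally non-trivial hyperelliptic fibration with $K_{\widetilde f}^2=d\cdot K_f^2$ and $\chi_{\widetilde f}=d\cdot\chi_f$, so $\lambda_{\widetilde f}=\lambda_f<4$; but the choice of $\pi$ forces $q_{\widetilde f}\geq 1$, placing $\widetilde f$ under the previous case and producing a contradiction.

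The main obstacle is the construction of $\pi$: its existence is equivalent to finiteness of the monodromy of the unitary factor $\mathcal F$, a property known to fail for general fibrations (as discussed in the work of Catanese and Dettweiler cited in \eqref{fujitadecom}). Consequently the hyperelliptic hypothesis must enter essentially here. The natural avenue is to exploit the hyperelliptic involution $\sigma$ on the fibers of $f$: modulo $\sigma$ the surface $S$ becomes birationally a $\bbp^1$-bundle over $B$, and $f$ is encoded by a $(2g+2)$-section (the branch divisor) inside this bundle. A nonzero strictly unitary summand in $f_*\omega_{S/B}$ should translate into flat data on the branch locus that can be trivialized by a cover of $B$, converting the unitary contribution into a trivial one and raising $q_f$.

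A fallback route, avoiding the monodromy question altogether, is to refine the Harder–Narasimhan slope argument directly so that a strictly unitary piece of $f_*\omega_{S/B}$ contributes quantitatively to force $\lambda_f\geq 4$, in parallel with Xiao's original argument but with trivial subsheaves replaced by unitary ones. Either way, the subtle point is bridging the gap between numerical slope bounds, which control only the degree of subbundles, and the finer splitting information recorded by the unitary factor $\mathcal F$ in \eqref{fujitadecom}.
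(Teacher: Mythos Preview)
Your outline matches the paper's proof up to the point where you need to dispose of a nontrivial unitary summand $\mathcal F$, and you correctly isolate the crux: an \'etale base change trivializes a piece of $\mathcal F$ precisely when the associated monodromy $\rho:\pi_1(B)\to U(r)$ has finite image. But this is exactly where your argument stops being a proof and becomes a wish. You acknowledge that infinite monodromy does occur for general fibrations and that the hyperelliptic hypothesis must therefore do real work, yet your ``natural avenue'' (flat data on the branch locus trivialized by a cover) and your ``fallback route'' (a refined Harder--Narasimhan argument) are both left as vague sketches. Neither is close to a complete argument, and the second would in fact require a genuinely new idea, since Xiao's slope method only sees degrees of subbundles, not the distinction between trivial and strictly unitary flat bundles.

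The paper closes this gap by a different mechanism. It first reduces the full conjecture to the hyperelliptic case via Barja--Zucconi (you omit this reduction, so as written you only prove the hyperelliptic case). Then it splits on whether $\rho_i$ has finite or infinite image. The finite case is your \'etale base change argument. For the infinite case the paper does \emph{not} try to stay \'etale: it passes to a semi-stable model by a possibly ramified base change $\phi:\wt B\to B$, and invokes the key technical input, Theorem~A.1 of the Appendix (equivalently \cite[Theorem 4.7]{luzuo14}), which says that for a semi-stable hyperelliptic fibration the unitary part of the Fujita decomposition can be made trivial after a further finite base change. The contradiction is then obtained not from the slope inequality (which need not survive a ramified base change), but from representation theory: the pullback $\phi^*\mathcal F_i$ still has infinite monodromy, yet it is forced to be a quotient of the trivial summand $\mathcal O_{\wt B}^{\oplus q_{\tilde f}}$ and hence, by Narasimhan--Seshadri, itself trivial. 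The proof of Theorem~A.1 is substantial (Higgs bundle decomposition, a lifting lemma for the unitary part into $f_*\Omega^1_S(\log\Upsilon)$, and a Bogomolov-type argument on the quotient $\bbp^1$-bundle), and this is the content your proposal is missing.
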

The above conjecture was confirmed to be true by Barja and Zucconi (cf. \cite{bz00})
under the assumption that $f$ is non-hyperelliptic or $g$ (or $b$) is small.

Related to the influence of $q_f$ on the lower bound of $\lambda_f$,
Xiao asked the following question (\cite[Problem 4]{xiao88}):
\begin{problem}[Xiao]\label{problemxiao}
Let $f:\,S \to B$ be a fibration of genus $g\geq 2$,
which is not locally trivial.
Find a good relationship between $\lambda_f$, $q_f$ and $g$.
\end{problem}

After that, there are lots of important results in this direction,
see for instance, \cite{bs08,bz01,cs08,konno93,konno94,luzuo14}.
Some explicit lower bounds depending on $q_f$ are also given in these literatures.
Among these, we would like to highlight the recent result obtained by Barja and Stoppino in \cite[Theorem\,1.3]{bs08}.
They proved that
\begin{equation}\label{bsresult}
\lambda_f\geq \frac{4(g-1)}{g-[m/2]},
\end{equation}
where $m:=\min\{{\rm Cliff}(f),\,q_f\}$ and ${\rm Cliff}(f)$ is defined to be the Clifford index of the general fiber of $f$.
When ${\rm Cliff}(f)$ is big enough, we see from \eqref{bsresult} that the lower bound of $\lambda_f$ is indeed an increasing function of $q_f$.
In fact, based on a lot of evidences, they made the following conjecture (\cite[Conjecture 1.1]{bs08}).
\begin{conjecture}[Barja-Stoppino]\label{conjecturebs}
Let $f:\,S \to B$ be as in Problem {\rm \ref{problemxiao}}. If $q_f < g-1$, then
\begin{equation}\label{conjectureequ}
\lambda_f\geq \frac{4(g-1)}{g-q_f}.
\end{equation}
\end{conjecture}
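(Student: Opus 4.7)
The plan is to refine Xiao's slope-estimation technique so that it detects the rank-$q_f$ trivial summand produced by the Fujita decomposition~\eqref{fujitadecom}, and then to treat separately the hyperelliptic range, where the general estimate degenerates.

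Setting $\mathcal{E}=f_*\omega_{S/B}$, I would begin from its Harder--Narasimhan filtration
$$0=\mathcal{E}_0\subsetneq\mathcal{E}_1\subsetneq\cdots\subsetneq\mathcal{E}_n=\mathcal{E},$$
with slopes $\mu_1>\cdots>\mu_n$ and ranks $r_i=\rank\mathcal{E}_i$. By~\eqref{fujitadecom} the slope-zero quotient of $\mathcal{E}$ has rank at least $q_f$, forcing $\mu_n=0$ and $r_{n-1}\leq g-q_f$. Combining the identity $\chi_f=\sum_{i=1}^{n}r_i(\mu_i-\mu_{i+1})$ (with the convention $\mu_{n+1}=0$) with Xiao's estimate $K_f^2\geq\sum_{i=1}^{n-1}(\mu_i-\mu_{i+1})N_i$, where $N_i$ is the self-intersection attached to the moving part of the subsystem generated by $\mathcal{E}_i\hookrightarrow f_*\omega_{S/B}$, reduces Conjecture~\ref{conjecturebs} to a lower bound of the form $N_i\geq c\, r_i$ with a constant $c$ strong enough to convert the Fujita-imposed constraint $r_{n-1}\leq g-q_f$ into $\lambda_f\geq 4(g-1)/(g-q_f)$ by a weighted-average manipulation. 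The classical slope inequality~\eqref{slopeinequality} is recovered in the case $q_f=0$.

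For general fibres the estimate $N_i\geq 2r_i$ (after suitable twisting, as in Xiao) closes the argument; combined with the Clifford-index refinement of Barja--Stoppino~\eqref{bsresult}, this already covers a broad non-hyperelliptic range of the conjecture. The critical obstruction is the \emph{hyperelliptic case}, where the fibrewise $g^1_2$ generates sub-pencils of unusually low degree and drops some $N_i$ strictly below $2r_i$. To recover the sharp inequality there, I would pass to a semistable model, use the relative hyperelliptic involution $\sigma$ to construct a double cover $S\to W$ onto a conic bundle $W\to B$, decompose $\mathcal{E}$ along the $\pm 1$-eigenspaces of $\sigma^*$, and locate the rank-$q_f$ trivial summand of~\eqref{fujitadecom} inside those eigenspaces. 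The positivity of $q_f$ then translates into an effective constraint on the branch divisor of $S\to W$, which feeds back into a sharpened lower bound for the $N_i$.

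The hard step will be this last one: the naive Xiao-type estimate is too weak precisely when $\mathrm{Cliff}(f)=0$, so the required gain cannot come from the fibrewise linear system alone. It has to be extracted from the global structure of the trivial subbundle $\mathcal{O}_B^{\oplus q_f}$, by constructing an explicit anti-invariant family of sections of $\omega_{S/B}$ whose base locus, image on $W$, and intersection with the branch divisor can be controlled. Only once that geometric input is in place does the rank constraint $r_{n-1}\leq g-q_f$ imposed by Fujita convert, via the HN bookkeeping above, into the conjectural slope bound $\lambda_f\geq 4(g-1)/(g-q_f)$.
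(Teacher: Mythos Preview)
This proposal is not a proof; it is an outline with two acknowledged gaps, and neither gap is closed.

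First, the statement you are attempting is an \emph{open conjecture}: the paper does not prove it in general, only in the hyperelliptic case (Corollary~\ref{corollarypfofconjectue}). Your ``general case'' paragraph asserts that the Clifford-type bound $N_i\geq 2r_i$, combined with the Fujita constraint $r_{n-1}\leq g-q_f$, yields $\lambda_f\geq 4(g-1)/(g-q_f)$ after a ``weighted-average manipulation''. It does not. Running Xiao's inequality with $d_i\geq 2r_i-2$ and $\mu_n=0$ gives only the known bound $\lambda_f\geq 4-4/g$; to extract the $q_f$-dependence one needs a strictly stronger fibrewise inequality, and the best one available is precisely the Clifford-index bound~\eqref{bsresult}, which falls short of the conjecture. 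No purely Harder--Narasimhan argument is known to close this gap, and your sketch does not supply the missing ingredient.

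Second, and more to the point of this paper, your hyperelliptic strategy is both different from and much vaguer than what the paper actually does. You propose to stay inside the HN framework, decompose $f_*\omega_{S/B}$ into $\sigma^*$-eigenspaces, and ``locate the trivial summand'' there to sharpen the $N_i$. The paper abandons the HN filtration entirely in the hyperelliptic case. Its argument runs through Xiao's \emph{local singularity indices} $s_i$ of the branch divisor (Definition~\ref{definitionofs_i}) and the explicit formulas of Theorem~\ref{xiaotheorem} expressing $K_f^2$ and $\chi_f$ as linear combinations of the $s_i$. The role of $q_f>0$ is captured by Proposition~\ref{fibredprop}: the induced double cover $\tilde\pi:\wt S\to\wt P$ is \emph{fibred} over a second base $\bbp^1$, and the branch divisor $\wt R$ lies in $2q_f+2$ fibres of that second fibration (Lemma~\ref{wtR=Gamma_i}). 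This forces a numerical inequality among the $s_i$ (Proposition~\ref{indeplemma}), which, plugged into Theorem~\ref{xiaotheorem}, yields the sharp bound $\lambda_f\geq\lambda_{g,q_f}\geq 4(g-1)/(g-q_f)$. None of this structure---the second fibration, the confinement of $\wt R$, the $s_i$-calculus---appears in your outline, and the sentence ``the hard step will be this last one'' is an admission that you have not found a substitute.
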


The bound \eqref{bsresult} obtained by Barja and Stoppino is very close to the conjectured bound \eqref{conjectureequ} once ${\rm Cliff}(f)$ is big.
In \cite{cs08} it was proved a bound for fibrations which are double covers of fibrations of genus $\gamma$.
This bound is analogous to the conjectured bound \eqref{conjectureequ}, but this second bound is not proven there.
In \cite[Example\,4.1]{bs08} it was proved that $\gamma=q_f$ and the two bounds coincide for double covers of trivial fibrations whose associated line bundle is ample.
In \cite{luzuo14}, we proved \eqref{conjectureequ} for semi-stable hyperelliptic fibrations.
We remark that if $q_f=g-1$, \eqref{conjectureequ} is known to be false (cf. \cite{bs08,pirola92}).

We are mainly interested in the lower bound of the slope of hyperelliptic fibrations, 
especially those with positive relative irregularity.
The main result is the following.
\begin{theorem}\label{maintheorem}
Let $f:\,S \to B$ be a locally non-trivial fibration
of hyperelliptic curves of genus $g\geq 2$ with relative irregularity $q_f$.
Then $q_f\leq \frac{g+1}{2}$, and
\begin{equation}\label{mainequation}
\lambda_f \geq \lambda_{g,q_f},
\end{equation}
where
\begin{equation}\label{defintionoflambda_gq_f}
\lambda_{g,q_f}=\left\{
\begin{aligned}
&8-\frac{4(g+1)}{(q_f+1)(g-q_f)},&\quad&\text{if~} q_f\leq \frac{g-1}{2};\\
&\frac{8(g-1)}{g},&&\text{if~} g \text{~is even, and~}q_f= \frac{g}{2};\\
&8,&&\text{if~}  g \text{~is odd, and~} q_f= \frac{g+1}{2}.
\end{aligned}\right.
\end{equation}
\end{theorem}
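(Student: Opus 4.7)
The plan is to exploit the hyperelliptic structure by working with the double cover $\pi:S\to T$ induced by the fiberwise hyperelliptic involution. Here $T$ is birational to a $\mathbb{P}^1$-bundle $h:T\to B$, and $\pi$ is branched along a divisor $R\subset T$ of relative degree $2g+2$, with $\mathcal{L}^{\otimes 2}=\mathcal{O}_T(R)$ for a line bundle $\mathcal{L}$. Since $\omega_{S/B}=\pi^*(\omega_{T/B}\otimes\mathcal{L})$ and $h_*\omega_{T/B}=0$, one has
\[
K_f^2 = 2(\omega_{T/B}+\mathcal{L})^2, \qquad f_*\omega_{S/B} = h_*(\omega_{T/B}\otimes\mathcal{L}),
\]
which converts the whole problem into one about intersection numbers and direct images on the ruled surface $T$. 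On a general fiber $F$ of $h$ one has $(\omega_{T/B}+\mathcal{L})\cdot F=g-1$, so the horizontal part of $\omega_{T/B}+\mathcal{L}$ has controlled degree.

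The next step is to translate the positivity of $q_f$ into geometry on $T$. The Fujita decomposition \eqref{fujitadecom} supplies a trivial subsheaf $\mathcal{O}_B^{\oplus q_f}\subset f_*\omega_{S/B}$; via the double cover this becomes $q_f$ linearly independent global sections of $\omega_{T/B}\otimes\mathcal{L}$. Their zero loci are horizontal divisors on $T$ whose intersection behavior with the branch locus $R$ is tightly prescribed, because each such section must restrict nontrivially to (a multi-section of) the branch data. A careful count of the components of $R$ that split off, together with the requirement that the fibration is locally non-trivial, should yield both the ceiling $q_f\leq (g+1)/2$ and a usable upper estimate on $(\omega_{T/B}+\mathcal{L})^2$.

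To obtain the slope inequality I would then apply Xiao's Harder--Narasimhan technique to $f_*\omega_{S/B}$, taking advantage of the fact that the first step of the filtration is the trivial subsheaf of rank $q_f$ and slope $0$. Combining the Hodge index theorem on $T$ applied to divisor classes of the form $\omega_{T/B}+t\mathcal{L}$ with the relation $(\omega_{T/B}+\mathcal{L})\cdot F=g-1$ should produce the sharp constant $8-4(g+1)/((q_f+1)(g-q_f))$ in the generic range $q_f\leq (g-1)/2$; the coefficient $(q_f+1)(g-q_f)$ emerges naturally from the rank contribution of the trivial summand versus the remaining Harder--Narasimhan pieces.

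The main obstacle lies in the extremal ranges $q_f=g/2$ and $q_f=(g+1)/2$, where the generic Xiao-type bound falls short of the target values $8(g-1)/g$ and $8$. In these cases the trivial subsheaf fills up nearly all of $f_*\omega_{S/B}$, so $R$ and $\mathcal{L}$ are extremely rigid; one must show that this rigidity forces either the complementary quotient to pick up extra positivity or the horizontal divisors cut out by the trivial sections to be numerically equivalent to a rational multiple of $\omega_{T/B}+\mathcal{L}$, thereby saturating the Hodge index estimate. Finally, producing matching extremal families (likely as explicit double covers of trivial $\mathbb{P}^1$-bundles with symmetric branch data) to confirm sharpness of each piecewise value of $\lambda_{g,q_f}$ will be a delicate separate verification.
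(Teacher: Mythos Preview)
Your outline misses the single geometric fact that drives the paper's proof: when $q_f>0$ the double cover is \emph{fibred}, i.e.\ there is a \emph{second} fibration $\tilde h':\wt P\to\bbp^1$ such that the branch locus $\wt R$ is entirely contained in $2q_f+2$ fibers of $\tilde h'$, and the induced double cover $B'\to\bbp^1$ has $g(B')=q_f$. This is how the bound $q_f\le\frac{g+1}{2}$ arises (Hurwitz on $B'\to\bbp^1$), and it is what makes $\ol R$ semi-negative definite after a partial contraction, yielding the sharp linear inequality among Xiao's local indices $s_i$ (the paper's Proposition~\ref{indeplemma}). Your $q_f$ sections of $\omega_{T/B}\otimes\mathcal L$ are morally the anti-invariant $1$-forms that produce this second fibration, but you stop at ``horizontal divisors'' without recognizing that they actually sweep out a pencil transverse to the ruling; that transversal pencil is the whole point.

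The Harder--Narasimhan/Hodge-index plan, as written, is not a proof but a hope: Xiao's method applied to a filtration with a trivial rank-$q_f$ step gives at best $\lambda_f\ge\frac{4(g-1)}{g-q_f}$ (this is essentially what Barja--Stoppino extract), which is strictly weaker than $\lambda_{g,q_f}$ for $0<q_f<\frac{g-1}{2}$. The denominator $(q_f+1)(g-q_f)$ does not fall out of rank bookkeeping in an HN filtration; in the paper it comes from the multiplicity estimate of Lemma~\ref{ERgeq2qf+2}, which says any $(-1)$-curve contracted by $\check\psi$ meets $\ol R$ in at least $2(q_f+1)$ points, so that $\check\psi$ contributes only to $s_i$ with $i>2q_f$. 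That estimate uses the second fibration in an essential way (a ramification count for $\bar h'|_{\mathcal E}$). Without identifying $\tilde h'$ and proving something like Lemma~\ref{ERgeq2qf+2}, your scheme will not reach the sharp constant, and the extremal cases $q_f=\frac{g}{2},\frac{g+1}{2}$ will remain out of reach for the same reason.
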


We will present examples to show that the bound \eqref{mainequation} is sharp.
It is not difficult to show that
$\lambda_{g,q_f}\geq \frac{4(g-1)}{g-q_f}.$
Therefore, we obtain
\begin{corollary}\label{corollarypfofconjectue}
For a locally non-trivial hyperelliptic fibration $f$, Conjecture {\rm\ref{conjecturebs}} is true.
Moreover, the inequality \eqref{conjectureequ} can become an equality only if $q_f=0$, $\frac{g-1}{2}$,
$\frac{g}{2}$ or $\frac{g+1}{2}$.
In particular, $g\leq 3$ if $q_f=1$ and $\lambda_f=4$.
\end{corollary}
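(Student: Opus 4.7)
The plan is to derive the corollary directly from Theorem \ref{maintheorem} by an elementary comparison of the two lower bounds on $\lambda_f$. Given that the theorem provides $\lambda_f \geq \lambda_{g,q_f}$, the task reduces to verifying that
\begin{equation*}
\lambda_{g,q_f} \;\geq\; \frac{4(g-1)}{g-q_f}
\end{equation*}
in each of the three cases entering the definition of $\lambda_{g,q_f}$, together with a careful analysis of when equality holds.

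In the generic case $q_f \leq (g-1)/2$, I would clear denominators and reduce the desired inequality to the equivalent condition $(q_f+1)(g+1-2q_f) \geq g+1$, which after expansion simplifies to $q_f\bigl(g-1-2q_f\bigr) \geq 0$. Both factors are nonnegative throughout the range $0 \leq q_f \leq (g-1)/2$, so the inequality holds, and equality forces either $q_f = 0$ or $q_f = (g-1)/2$. In the two boundary cases, direct substitution shows that $\lambda_{g,q_f}$ already coincides with the conjectured bound: for $g$ even and $q_f = g/2$ one has $\frac{8(g-1)}{g} = \frac{4(g-1)}{g/2}$, while for $g$ odd and $q_f = (g+1)/2$ one has $8 = \frac{4(g-1)}{(g-1)/2}$. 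Assembling the three cases yields Conjecture \ref{conjecturebs} for hyperelliptic fibrations and singles out the exceptional values $q_f \in \{0,\,(g-1)/2,\,g/2,\,(g+1)/2\}$ as the only ones at which equality in \eqref{conjectureequ} can occur.

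For the final assertion I would specialize to $q_f = 1$ and $\lambda_f = 4$. Since $\frac{4(g-1)}{g-q_f} = 4$ when $q_f = 1$, the hypothesis $\lambda_f = 4$ forces equality in \eqref{conjectureequ}, so $q_f = 1$ must match one of the four exceptional values. Among these, $q_f = 0$ is impossible, $q_f = (g+1)/2$ would give $g = 1$ contradicting $g \geq 2$, while $q_f = (g-1)/2 = 1$ gives $g = 3$ and $q_f = g/2 = 1$ gives $g = 2$. Hence $g \leq 3$.

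Since all heavy lifting is absorbed into Theorem \ref{maintheorem}, there is no conceptual obstacle here and the argument is essentially a short computation. The one point requiring care is the equality tracking across the piecewise definition of $\lambda_{g,q_f}$: a slip there would contaminate the list of exceptional values and in turn the constraint $g\leq 3$ in the final clause, so it is worth verifying each of the three regimes independently before combining them.
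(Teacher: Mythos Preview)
Your proposal is correct and follows essentially the same route as the paper: both compare $\lambda_{g,q_f}$ to $\frac{4(g-1)}{g-q_f}$ case by case, reducing the generic case $q_f\leq(g-1)/2$ to the nonnegativity of $q_f(g-1-2q_f)$ (the paper writes the difference directly as $\frac{4q_f(g-2q_f-1)}{(q_f+1)(g-q_f)}$, which is the same factorization), and then handle the two boundary cases by direct substitution. The equality analysis and the specialization to $q_f=1$, $\lambda_f=4$ match the paper's argument as well.
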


Note that a fibration $f$ is hyperelliptic if and only if ${\rm Cliff}(f)=0$.
Hence our result is somewhat orthogonal to the cases treated in \cite{bs08}.
In particular, this seems to indicate that the Clifford index has no essential influence in the relation between the slope and the relative irregularity.

According to \cite[Theorem\,4.7]{luzuo14} (see also Theorem \ref{thmFtrivial}), if $f$ is hyperelliptic, then $\mathcal F=0$ (i.e., there is no non-trivial unitary part)
in \eqref{fujitadecom} after a suitable finite \'etale base change.
Hence by our theorem, Conjecture \ref{conjofxiao} is true when $f$ is hyperelliptic.
Combining with the result of Barja and Zucconi (cf. \cite{bz00}), we prove
\begin{corollary}\label{corollarypfofconjectuexiao}
Conjecture {\rm \ref{conjofxiao}} is true.
\end{corollary}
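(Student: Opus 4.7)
The plan is to combine the main theorem with the already-cited result of Barja and Zucconi and with the triviality of the unitary part for hyperelliptic fibrations (Theorem \ref{thmFtrivial}). Since \cite{bz00} has already verified Conjecture \ref{conjofxiao} whenever $f$ is non-hyperelliptic, I may assume throughout that $f$ is a locally non-trivial hyperelliptic fibration with $\lambda_f<4$, and the task reduces to showing that $f_*\omega_{S/B}$ is ample.

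By Fujita's decomposition \eqref{fujitadecom}, write $f_*\omega_{S/B}=\mathcal{A}\oplus\mathcal{F}\oplus\mathcal{O}_B^{\oplus q_f}$ with $\mathcal{A}$ ample and $\mathcal{F}$ unitary. Xiao's original slope estimate (recalled in the introduction) gives $\lambda_f\geq 4$ whenever $q_f>0$, so the hypothesis $\lambda_f<4$ immediately forces $q_f=0$. Hence only the unitary summand $\mathcal{F}$ can obstruct ampleness, and it suffices to show that $\mathcal{F}=0$.

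To kill $\mathcal{F}$, I would invoke Theorem \ref{thmFtrivial} (i.e.\ \cite[Theorem\,4.7]{luzuo14}): since $f$ is hyperelliptic, there is a finite \'etale cover $\pi:\widetilde{B}\to B$ such that $\pi^*\mathcal{F}$ is trivial. Base-changing produces a locally non-trivial hyperelliptic fibration $\widetilde{f}:\widetilde{S}\to\widetilde{B}$ of the same genus $g$, with $\widetilde{f}_*\omega_{\widetilde{S}/\widetilde{B}}=\pi^*\mathcal{A}\oplus \mathcal{O}_{\widetilde{B}}^{\oplus\mathrm{rk}(\mathcal{F})}$, so that $q_{\widetilde{f}}=\mathrm{rk}(\mathcal{F})$; since the slope is invariant under finite \'etale base change, one also has $\lambda_{\widetilde{f}}=\lambda_f<4$.

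Now apply Theorem \ref{maintheorem} to $\widetilde{f}$: if $\mathrm{rk}(\mathcal{F})>0$ then $\lambda_{\widetilde{f}}\geq \lambda_{g,q_{\widetilde{f}}}$. A direct inspection of the function $\lambda_{g,q_f}$ in \eqref{defintionoflambda_gq_f} shows $\lambda_{g,q_f}\geq 4$ for every admissible pair with $q_f\geq 1$ (the minimum $4$ is attained only at $(g,q_f)=(2,1)$ and $(3,1)$), contradicting $\lambda_{\widetilde{f}}<4$. Therefore $\mathcal{F}=0$, and together with $q_f=0$ this yields $f_*\omega_{S/B}=\mathcal{A}$, which is ample. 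The serious content behind this short deduction is of course Theorem \ref{maintheorem} itself; the remaining ingredients, while delicate, are already available in the literature.
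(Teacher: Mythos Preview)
Your overall strategy—reducing to the hyperelliptic case via Barja–Zucconi, forcing $q_f=0$ from $\lambda_f<4$, and then killing $\mathcal F$ by trivializing it after an \'etale base change and reapplying the slope bound—is the same as the paper's. However, there is a genuine gap in your invocation of Theorem~\ref{thmFtrivial}: that theorem is stated and proved only for \emph{semi-stable} hyperelliptic fibrations, whereas your $f$ has no reason to be semi-stable. The informal sentence in the introduction notwithstanding, the result you cite literally carries the semi-stability hypothesis, and its proof uses it (see the opening line of the proof of Theorem~\ref{thmFtrivial'}). So as written, you have only handled the case where the monodromy of $\mathcal F$ is finite.

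The paper closes this gap as follows. Decompose $\mathcal F=\bigoplus_i\mathcal F_i$ into irreducible unitary pieces with associated representations $\rho_i$. If some $\rho_i$ has finite image, your \'etale-base-change argument applies verbatim and gives a contradiction. If $\rho_i$ has infinite image, one first performs a (possibly ramified) semi-stable reduction $\phi:\wt B\to B$, then applies Theorem~\ref{thmFtrivial} to the semi-stable model $\tilde f$ to obtain $\tilde f_*\omega_{\wt S/\wt B}=\wt{\mathcal A}\oplus\mathcal O_{\wt B}^{\oplus q_{\tilde f}}$ with $\wt{\mathcal A}$ ample. One cannot simply compare slopes here, since the base change is ramified; instead one uses the inclusion $\tilde f_*\omega_{\wt S/\wt B}\subseteq \phi^*f_*\omega_{S/B}$ together with a degree count to show that the projection to $\phi^*\mathcal F_i$ is surjective and annihilates $\wt{\mathcal A}$. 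Hence $\phi^*\mathcal F_i$ is a quotient of $\mathcal O_{\wt B}^{\oplus q_{\tilde f}}$ and, by polystability, a direct summand of it—so $\phi^*\mathcal F_i$ is trivial, contradicting the fact that $\rho_i\circ\phi_*$ still has infinite image. Your argument needs this second case (or an independent proof that Theorem~\ref{thmFtrivial} holds without the semi-stability assumption).
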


Our paper is organized as follows.
In Section \ref{preliminaries}, we review some basic properties about
a hyperelliptic fibration $f:\, S \to B$  mainly due to Xiao Gang.
By blowing up the isolated fixed points of the hyperelliptic involution, we get a double cover
$\pi:\, \wt S \to \wt P$ of smooth projective surfaces.
We then define the local relative invariants $s_i$ for $2\leq i \leq g+2$,
and show in Theorem \ref{xiaotheorem} that the global relative invariants of $f$ can be expressed by
those local invariants.
In Section \ref{sectionpositive},
we restrict ourselves to the case where the relative irregularity is positive,
and prove an inequality \eqref{s2leqDelta} involving these invariants $s_i$'s.
The proof starts from the observation
that the double cover $\tilde\pi:\,\wt S \to \wt P$ is fibred.
In Section \ref{proofofmain}, we prove Theorem \ref{maintheorem} and its corollaries.
When $q_f=0$, \eqref{mainequation} is nothing new but \eqref{slopeinequality}.
If $q_f>0$, \eqref{mainequation} follows from \eqref{s2leqDelta} and
the formulas given in Theorem \ref{xiaotheorem}.
In Section \ref{examples}, we present examples
to show that the bound \eqref{mainequation} is sharp.
Finally in the Appendix, we provide a proof of a theorem on the unitary part of a semi-stable hyperelliptic fibration, which is used in the proof of Corollary \ref{corollarypfofconjectuexiao}.

\section{Preliminaries}\label{preliminaries}
\subsection{Double covers}
In this subsection, we review some basic properties of double covers
(cf. \cite[\S\,V.22]{bhpv} and \cite[\S\,2]{xiao91}).

A double cover $\pi:\,X\to Y$ of a smooth projective surface $Y$ is determined by
a line bundle $L$ over Y and a section $s\in H^0(Y,L^2)$ where
$$X={\rm Proj}\left(\bigoplus_{i=0}^{+\infty} L^{ -i}\right)\bigg/\Big\langle s^{\vee}\Big\rangle.$$

Let $R$ be the zero divisor of $s$. Then $L^{\otimes2}\cong \mathcal O_Y(R)$, the map $\pi$ is completely determined by the pair $(R,L)$, and
$X$ is smooth if and only if $R$ is smooth.
%
To obtain a smooth double cover from a double cover $\pi: X\to Y$ of a smooth projective surface $Y$,
we perform the {\it canonical resolution} (cf. \cite[\S\,III.7]{bhpv}).
\begin{center}\mbox{}
  \xymatrix{
\wt X\ar@{=}[r] & X_{t} \ar[r]^-{\phi_t}\ar[d]_-{\tilde \pi=\pi_t}&
 X_{t-1}\ar[r]^-{\phi_{t-1}}\ar[d]^-{\pi_{t-1}}&
\cdots \ar[r]^-{\phi_2} & X_1\ar[r]^-{\phi_1}\ar[d]_-{\pi_1}
& X_0 \ar[d]^-{\pi_0=\pi}\ar@{=}[r] & X\\
\wt Y\ar@{=}[r] & Y_{t} \ar[r]^-{\psi_t}& Y_{t-1}\ar[r]^-{\psi_{t-1}}&
\cdots \ar[r]^-{\psi_2} & Y_1\ar[r]^-{\psi_1} & Y_0 \ar@{=}[r] & Y}
\end{center}
where $\wt X=X_t$ is smooth and $\psi_i$'s are successive blowing-ups
resolving the singularities of $R$;
$\pi_{i}:\,X_i \to Y_i$ is the double cover determined by
$(R_i,\,L_i)$ with
\begin{equation}\label{eqncomupteinvdouble}
R_i=\psi_i^*(R_{i-1})-2[m_{i-1}/2]\, \mathcal E_i,\qquad
L_i=\psi_i^*(L_{i-1})\otimes \mathcal O_{Y_i}\left(\mathcal E_i^{-[m_{i-1}/2]}\right),
\end{equation}
where $\mathcal E_i$ is the exceptional divisor of $\psi_i$,
$m_{i-1}$ is the multiplicity of the singular point $y_{i-1}$ in $R_{i-1}$,
$[~]$ stands for the integral part,
$R_0=R$ and $L_0=L$.

The morphism
$$\psi\triangleq \psi_1\circ\cdots\circ\psi_t:~\wt Y \lra Y$$
is also called a {\it minimal even resolution} of $R$.
We call a singularity $y_j \in R_{j}\subseteq Y_{j}$ infinitely close to
$y_{i-1}\in R_{i-1}\subseteq Y_{i-1}$ ($j\geq i$), if
$\psi_i\circ\cdots\circ\psi_j(y_j)=y_{i-1}\,.$

\begin{definition}\label{defofniglig}
A singularity $y_{i-1}\in R_{i-1}\subseteq Y_{i-1}$ above is said to be negligible if $[m_{i-1}/2]=1$,
and $[m_j/2]\leq 1$ for any $y_j \in R_{j}\subseteq Y_{j}$ ($j\geq i$)
infinitely close to $y_{i-1}$.
In this case, the blowing-up $\psi_{i}:\,Y_{i}\to Y_{i-1}$ is called a negligible blowing-up.
\end{definition}

It is easy to see that $\psi$ can be decomposed into $\tilde\psi:\, \wt Y \to \hat Y$
and $\hat\psi:\,\hat Y \to Y$, where $\tilde\psi$ and $\hat\psi$ are composed
of negligible and non-negligible blowing-ups respectively.
We call $\hat\psi$
the {\it minimal even resolution of non-negligible singularities} of $R$.


\subsection{Invariants of hyperelliptic fibrations}
In this subsection, we review some results about hyperelliptic fibrations
mainly due to Xiao (cf. \cite[\S\,2]{xiao91} \& \cite[\S\,5.1]{xiao92}).

Let $f:\,S \to B$ be a relatively minimal hyperelliptic fibration.
The relative canonical map of $f$ is generically of degree $2$. This map determines an
involution $\sigma$ on S whose restriction on a general fiber $F$ of $f$ is the hyperelliptic involution
of $F$. The involution $\sigma$ is called the hyperelliptic involution associated to $f$.

Let $\vartheta: \wt S \to S$ be the composition of all the blowing-ups of isolated fixed points of the
hyperelliptic involution, and let $\tilde\sigma$ be the induced involution on $\wt S$.
The quotient space $\wt P = \wt S/\langle\tilde\sigma\rangle$ is
a smooth surface, and $f$ induces a ruling $\tilde h:\,\wt P \ra B$ on $\wt P$.
The quotient map $\tilde\pi:\,\wt S\to\wt P$ is a double cover which is determined by
the pair $(\wt R,\,\wt L)$, where $\wt R$ is the branch locus of $\tilde\pi$
and $\wt L$ is a line bundle such that $\mathcal O_{\wt P}\big(\wt R\big) \cong \wt L^{\otimes 2}$.

For any contraction $\varphi:\,\wt P \to P'$ and $R'=\varphi(\wt R)$, the double cover $\tilde \pi$ induces a double cover
$S'\to P'$, which is assumed to be determined by the pair $(R',\,L')$. For convenience, we simply call $(R',\,L')$ the image of $(\wt R,\,\wt L)$.
\begin{lemma}[\cite{xiao91,xiao92}]\label{contractionbyxiao}
There exists a contraction of ruled surfaces
$\psi:\,\wt P \to P$:
$$\xymatrix{
  \wt P \ar[rr]^-{\psi} \ar[dr]_-{\tilde h}
                &  &    P  \ar[dl]^-{ h}   \\
                & B                 }$$
such that
$P$ is a geometrical ruled surface (i.e., any fiber of $ h$ is $\bbp^1$),
the singularities of $R$ are at most of multiplicity $g+2$,
and the self-intersection $R^2$ is the smallest among all such choices,
where $(R,L)$ is the image of $(\wt R,\wt L)$ in $P$.
\end{lemma}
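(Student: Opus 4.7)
The plan is to define $\psi$ by minimization and then show that the multiplicity bound is automatic. Any maximal sequence of contractions of $(-1)$-curves lying in the fibers of $\wt h:\wt P\to B$ terminates at a geometrically ruled surface over $B$; since each fiber of $\wt h$ has only finitely many components and each contraction step admits finitely many choices, the possible values $R_\psi^2:=(\psi_*\wt R)^2$ form a finite subset of $\mathbb Z$. I would fix $\psi:\wt P\to P$ realizing the minimum. The lemma will follow once I show $R=\psi_*\wt R$ has no singularity of multiplicity exceeding $g+2$.

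Suppose, for contradiction, some $p\in R$ has multiplicity $m\geq g+3$, and let $F$ be the fiber of $h:P\to B$ through $p$. The branch locus $\wt R\subset\wt P$ is smooth, because $\tilde\pi:\wt S\to\wt P$ is a smooth double cover of smooth surfaces, and hence $\psi$ cannot be an isomorphism near $p$. Decomposing $\psi$ into a sequence of $(-1)$-curve contractions and applying the universal property of blow-ups, $\psi$ factors as $\wt P\xrightarrow{\varphi}Q\xrightarrow{\mu}P$, where $\mu$ is the blow-up of $P$ at $p$ with exceptional curve $E$. Since $p$ is a smooth point of $F$, the strict transform $\wt F=\mu^*F-E$ is a $(-1)$-curve on $Q$; contracting it produces a new geometrically ruled surface $P'$ together with a new contraction $\psi':=(Q\to P')\circ\varphi:\wt P\to P'$ (an \emph{elementary transformation} of $P$ at $p$).

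To close the argument, I would compute $R'^{\,2}-R^{\,2}$ using the identity $(\pi_*C)^2=C^2+(C\cdot A)^2$ for contraction of a $(-1)$-curve $A$, together with $R\cdot F=2g+2$ and $\mathrm{mult}_p R=m$. In the generic case where $F\not\subset R$ this yields
$$R'^{\,2}-R^{\,2}=4(g+1)(g+1-m),$$
which is strictly negative as soon as $m\geq g+2$, contradicting the minimality of $R^2$. Hence $m\leq g+2$, as required. The hard part will be the case where $F\subset R$: there one writes $R=F+R_0$ with $\mathrm{mult}_p R_0=m-1$ and $R_0\cdot F=2g+2$, and the bookkeeping for the strict transforms of $R_0$ and $F$ under $\mu$ requires care; a parallel computation gives the same inequality. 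A secondary technical point is the existence of the factorization $\psi=\mu\circ\varphi$, which follows because $\psi$ is not an isomorphism near $p$ and any decomposition of $\psi$ into $(-1)$-curve contractions must eventually contract a $(-1)$-curve onto $p$, matching the exceptional curve of the blow-up at $p$.
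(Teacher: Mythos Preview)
The paper does not supply its own proof of this lemma; it is quoted from Xiao's work. Your strategy---pick $\psi$ minimizing $R^2$ among all contractions to a geometrically ruled surface, then use an elementary transformation centered at a point of excessive multiplicity to contradict minimality---is exactly the intended one, and the paper itself invokes the same elementary-transformation step later (see the parenthetical remark in the proof of Theorem~\ref{xiaotheorem}, where this argument is used to show $s_{g+2}=0$ when $g$ is even).

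One point needs more care. Your identity $R'^{\,2}-R^{\,2}=4(g+1)(g+1-m)$ is obtained by treating the relevant divisor on $Q$ as the \emph{strict transform} $\mu^*R-mE$. But the quantity being minimized in the lemma is the square of the image of $\wt R$, i.e.\ of the branch divisor of the induced double cover on each model; by the even-resolution rule \eqref{eqncomupteinvdouble} this branch divisor on $Q$ is $\mu^*R-2[m/2]\,E$, which contains $E$ when $m$ is odd. Carrying $2[m/2]$ rather than $m$ through the elementary transformation yields
\[
R'^{\,2}-R^{\,2}=4(g+1)\bigl(g+1-2[m/2]\bigr).
\]
For $m\geq g+3$ one has $2[m/2]\geq g+2$, so this is still negative and your contradiction survives. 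Note, however, that for $m=g+2$ with $g$ odd the right-hand side vanishes: singularities of multiplicity exactly $g+2$ can genuinely persist, which is why the lemma asserts only $m\leq g+2$ and not the $m\leq g+1$ that your formula (valid only for even $m$) would suggest. The same parity correction disposes of the case $F\subset R$ with no new difficulty. Your remark on the factorization $\psi=\mu\circ\varphi$ is fine; it is the standard fact that a birational morphism of smooth surfaces which is not an isomorphism above a point factors through the blow-up there.
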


Note that $\psi$ can be decomposed into $\tilde\psi:\,\wt P\to \hat P$ and $\hat\psi:\,\hat P \to P$
in the following diagram,
where $\hat\psi:\, \hat P \to P$ is a
minimal even resolution of non-negligible singularities of $R$.
\begin{figure}[H]
\begin{center}\mbox{}
  \xymatrix@C=1.3cm{
     \wt S \ar@{->}[d]_-{\tilde\pi}\ar@{->}[rr]^-{\vartheta} &   & S \ar@/^7mm/"3,3"^f\\
      \wt P \ar@{->}[r]^-{\tilde\psi} \ar@{->}[drr]_-{\tilde h}
           & \hat{P}\ar@{->}[r]^-{\hat\psi} \ar@{->}[dr]^-{\hat h} &
           P \ar@{->}[d]^-{ h}\\
           &&B}
\end{center}
\caption{Hyperelliptic fibration.\label{hyperellipticdiagram}}\vspace{-0.4cm}
\end{figure}

Let $(\hat R,\,\hat L)$ be the
image of $(\wt R,\, \wt L)$ in $\hat P$.
Let $\hat\psi=\hat\psi_1\circ\cdots\circ\hat\psi_t$ be the decomposition of $\hat\psi$,
where $\hat\psi_i: \hat P_i \to \hat P_{i-1}$ is a blowing-up at $y_{i-1}$,
$\hat P_0=P$ and $\hat P_t=\hat P$.
Let $\hat R_i$ be the image of $\hat R$ in $\hat P_i$.
It could happen that there is one or more singular points of $\hat R_i$
over the exceptional curve $\hat{\mathcal E_i}$ of $\hat\psi_i$.
We remark that the decomposition of $\hat\psi$ is not unique.
If $y_{i-1}$ is a singular point of $\hat R_{i-1}$ of odd multiplicity $2k+1$ ($k\geq 1$)
and there is a unique singular point $y$ of $\hat R_i$
on the exceptional curve $\hat{\mathcal E_i}$ of multiplicity $2k+2$,
then we always assume that $\hat\psi_{i+1}: \hat P_{i+1} \to \hat P_{i}$ is the standard blowing-up at $y_i=y$.
We call such a pair $(y_{i-1},y_{i})$ a {\it singularity of $R$ of type $(2k+1 \to 2k+1)$},
and call $y_{i-1}$ (resp. $y_i$) the first (resp. second) component of such a singularity.


\begin{definition}\label{definitionofs_i}
For any singular fiber $F$ of $f$ and $3\leq i\leq g+2$,
the $i$-th singularity index of $F$ is defined as follows
(with respect to the contraction $\psi$):
\begin{list}{}
{\setlength{\labelwidth}{0.3cm}
\setlength{\leftmargin}{0.4cm}}
\item[$\bullet$] if $i$ is odd, $s_i(F)$ equals the number of $(i\to i)$
                 type singularities of $R$ over the image $f(F)$;
\item[$\bullet$] if $i$ is even, $s_i(F)$ equals the number of singularities of multiplicity $i$ or $i+1$ of $R$ over the image $f(F)$,
                 neither belonging to the second component of $(i-1 \to i-1)$ type singularities nor to the first component of $(i+1 \to i+1)$  type singularities.
\end{list}
We remark that the infinitely close singularities of $R$ should also be taken into consideration when defining $s_i(F)$ above for even $i$.
Let $K_{\hat P/B}=K_{\hat P}-\hat h^*K_{B}$ and $R'=\hat R \setminus\hat V$,
where $\hat V$ is the union of isolated vertical $(-2)$-curves in $\hat R$.
Here a curve $C\subseteq \hat R$ is called to be {\it isolated} in $\hat R$,
if there is no other curve $C'\subseteq \hat R$ such that $C\cap C' \neq \emptyset$. We define
$$s_2\,\triangleq(K_{\hat P/B}+R')\cdot R',\quad\text{~and~}\quad s_i\,\triangleq\sum_{F \text{~is singular}} s_i(F),\quad 3\leq i\leq g+2.$$
By definition, $s_i$ is non-negative for $i\geq 3$, but it is not clear whether $s_2$ is non-negative or not.
\end{definition}

\begin{lemma}[\cite{xiao91,xiao92}]
These singularity indices $s_i$'s defined above are independent on the choices of $\psi$ in Lemma {\rm\ref{contractionbyxiao}}.
\end{lemma}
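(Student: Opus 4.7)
The plan is to use the classical fact that any two geometrically ruled surfaces $P,P'$ over $B$ dominated by $\wt P$ are connected by a sequence of elementary transformations along fibers of the ruling, and to verify that each admissible elementary transformation preserves the singularity indices. Given two choices $\psi_j:\,\wt P \to P_j$ ($j=1,2$) satisfying Lemma \ref{contractionbyxiao}, the induced birational map $P_1\dashrightarrow P_2$ over $B$ factors through a sequence of elementary transformations $\epsilon_p$, each blowing up a point $p$ on the current surface and contracting the proper transform of the fiber through $p$. This factorization is standard for birational maps between relatively minimal ruled surfaces over a fixed base.

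The technical heart of the argument uses the minimality of $R^2$ from Lemma \ref{contractionbyxiao}. Computing the effect of a single $\epsilon_p$ on $R^2$ in terms of $m=\mathrm{mult}_p\,R$ and of the intersection of $R$ with the fiber through $p$, one sees that the minimality forces each $\epsilon_p$ in the factorization to preserve $R^2$, which constrains $p$ to a restricted set of special positions relative to $R$. For each admissible position I would then perform a local analysis of how the singularities of $R$ and their infinitely close points transform. The key assertion is that the multiset of non-negligible singularities of $R$ over the image fiber, tagged with the types $(2k+1\to 2k+1)$ and the associated first/second-component markers, is unchanged. This yields invariance of $s_i$ for $3\leq i\leq g+2$.

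For $s_2=(K_{\hat P/B}+R')\cdot R'$, invariance reduces to the previous step: the minimal even resolution $\hat\psi:\,\hat P \to P$ and the subdivisor $R'=\hat R\setminus \hat V$ depend only on the list of non-negligible singularities of $R$, which is intrinsic by the analysis above. Equality of the intersection numbers on $P_1$ and $P_2$ then follows from the projection formula applied to the birational map $P_1\dashrightarrow P_2$ lifted to the even resolutions $\hat P_j$, using that the components contracted at the $\hat P$ level are exactly the negligible exceptional divisors, which do not contribute to $(K_{\hat P/B}+R')\cdot R'$.

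The main obstacle is the case analysis in the second paragraph. One must enumerate the admissible triples $(p,m,R\cdot F_p)$ compatible with $R^2$-minimality and verify that each $s_i$ balances; the delicate situations are those in which an elementary transformation could a priori convert an odd-odd $(2k+1\to 2k+1)$ singularity into an even multiplicity $2k+2$ singularity, or vice versa, and one must check that such conversions are either forbidden by the minimality constraint or cancel out of the total count once infinitely close singularities are tracked correctly.
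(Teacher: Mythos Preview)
Your plan via elementary transformations and the $R^2$-minimality constraint is exactly Xiao's approach in the cited references \cite{xiao91,xiao92}; the paper does not give its own proof but only cites Xiao and appends the remark following the lemma.

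One point in your outline deserves sharpening. For $s_2$ you argue that $\hat P$ and $R'$ ``depend only on the list of non-negligible singularities of $R$, which is intrinsic by the analysis above,'' and then invoke a projection-formula comparison between $\hat P_1$ and $\hat P_2$. But the invariance of the \emph{counts} $s_i$ ($i\geq 3$) does not by itself identify $\hat P_1$ with $\hat P_2$: the non-negligible singularities live in different ambient surfaces $P_1,P_2$, and what you really need is that the two contractions $\tilde\psi_j:\wt P\to\hat P_j$ coincide as morphisms from the fixed surface $\wt P$. This stronger statement---that $\tilde\psi$ itself is independent of $\psi$---is precisely what Xiao proves in \cite[Lemma~8]{xiao91}, and once it is in hand the invariance of $s_2=(K_{\hat P/B}+R')\cdot R'$ is immediate, with no projection-formula argument needed; this is the content of the paper's remark. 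Your elementary-transformation case analysis, done carefully, will in fact yield this stronger conclusion (each admissible $\epsilon_p$ leaves unchanged the set of curves in $\wt P$ contracted by $\tilde\psi$), so you should aim for and state that directly rather than passing through the numerical $s_i$ and then attempting to recover the identification of the $\hat P_j$ afterward.
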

\noindent
We just remark that the independence of $s_2$ on $\psi$ is contained implicitly in \cite[Lemma\,8]{xiao91}, which proves the independence of $\tilde\psi$ and $s_i$ on $\psi$ for $i\geq 3$.

\begin{remark}
In \cite{liutan14}, it is proven that if $f$ is semi-stable, then
$s_{g+2}=0$ and
$s_2+2\sum\limits_{k=2}^{[g/2]} s_{2k}$
\big(resp. $s_{2k+1}$ for $1\leq k \leq [g/2]$\big) is the number of nodes of type $0$ (resp. $k$) contained in the fibers of $f$.
\end{remark}

\begin{theorem}[\cite{xiao91,xiao92}]\label{xiaotheorem}
Let $f:\,S\to B$ be a fibration of hyperelliptic curves of genus $g\geq2$,
and $s_i$'s the singularity indices as above. Then
$$\begin{aligned}
(2g+1)K_f^2&=(g-1)\Big(s_2+(3g+1)s_{g+2}\Big)+\sum_{k=1}^{[\frac{g}{2}]}
a_ks_{2k+1}+\sum_{k=2}^{[\frac{g+1}{2}]}b_ks_{2k},\\
(2g+1)\chi_f&=\frac{gs_2+(g^2-2g-1)s_{g+2}}{4}
+\sum_{k=1}^{[\frac{g}{2}]} k(g-k)s_{2k+1}+\sum_{k=2}^{[\frac{g+1}{2}]}\frac{k(g-k+1)}{2}s_{2k},\\
\end{aligned}$$
where
$a_k=12k(g-k)-2g-1$ and $b_k=6k(g-k+1)-4g-2.$
\end{theorem}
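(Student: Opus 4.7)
The strategy is to compute both sides of each formula by starting from the branched double cover $\tilde\pi:\wt S\to\wt P$, pulling everything back to the geometrically ruled $P$ via the canonical resolution $\psi=\tilde\psi\circ\hat\psi$, and finally reorganising the resulting local contributions according to the singularity indices in Definition \ref{definitionofs_i}. First I would apply the standard double-cover identities
\[
K_{\wt S}=\tilde\pi^{*}\bigl(K_{\wt P}+\wt L\bigr),\qquad \chi(\mathcal O_{\wt S})=2\chi(\mathcal O_{\wt P})+\tfrac12\wt L\cdot\bigl(\wt L+K_{\wt P}\bigr),
\]
to obtain $K_{\wt S/B}^{2}=2(K_{\wt P/B}+\wt L)^{2}$. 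Birational invariance of $\chi(\mathcal O)$ yields $\chi_f=\chi(\mathcal O_{\wt S})-(g-1)(b-1)$, and $K_f^{2}=K_{\wt S/B}^{2}+r$, where $r$ is the number of $(-1)$-curves on $\wt S$ contracted by $\vartheta$ over the isolated fixed points of the hyperelliptic involution.

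Next, I would pull each intersection number down to $P$ one blowup at a time using \eqref{eqncomupteinvdouble}. A short computation shows that a single blowup at a branch point of multiplicity $m$ decreases $(K+L)^{2}$ by $([m/2]-1)^{2}$ and $L\cdot(K+L)$ by $[m/2]([m/2]-1)$; negligible blowups ($[m/2]=1$) contribute nothing, so only $\hat\psi$ matters. Grouping by the types in Definition \ref{definitionofs_i}, each singularity of type $(2k+1\to2k+1)$ contributes the combined amount of a mult-$(2k+1)$ blowup and a mult-$(2k+2)$ blowup, while each singularity counted by $s_{2k}$ contributes a single blowup of multiplicity $2k$ or $2k+1$. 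Inserting these totals into the identities of the previous step, and then multiplying through by $2g+1$ (which appears naturally when combining the $K_{\wt P/B}^{2}$, $\wt L^{2}$ and $\wt L\cdot K_{\wt P}$ terms on the ruled surface), one reads off the $s_{2k+1}$-coefficient of $(2g+1)K_f^{2}$ as $a_k=12k(g-k)-2g-1$ and the $s_{2k}$-coefficient as $b_k=6k(g-k+1)-4g-2$, and analogously $k(g-k)$ and $k(g-k+1)/2$ in $(2g+1)\chi_f$.

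What remains is a genuinely global contribution from $P$ itself and a special contribution from the maximum-multiplicity singularities. For the first, using $\hat L\equiv\tfrac12\hat R$ to rewrite $(K_{\hat P/B}+\hat L)^{2}$ in terms of $\hat R$ and then stripping off the isolated vertical $(-2)$-curves $\hat V\subset\hat R$ (which correspond to trivial ramification and must be excluded) leaves precisely $s_2=(K_{\hat P/B}+R')\cdot R'$ with coefficient $g-1$ in $(2g+1)K_f^{2}$ and $g/4$ in $(2g+1)\chi_f$. For the second, the canonical resolution of a singularity of multiplicity $g+2$ produces a $(-1)$-curve of ramification type on $\wt S$ that is contracted by $\vartheta$; this extra contribution to $r$ in the first step is what shifts the coefficient of $s_{g+2}$ away from the value extrapolated from the $a_k$ or $b_k$ formula into the listed $(g-1)(3g+1)$ and $(g^{2}-2g-1)/4$. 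The main obstacle is precisely this case-analysis bookkeeping: pairing the two blowups of each odd-type singularity correctly, assigning infinitely close singular points between the current index and its negligible tail, and identifying the $(-1)$-curves on $\wt S$ arising over maximal-multiplicity branch singularities so that both sides of each equation match after applying $\vartheta$.
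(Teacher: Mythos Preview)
Your strategy matches the paper's: apply the double-cover formulas on $\hat P$, track the local contributions of each blowup in $\hat\psi$ grouped by singularity type, and eliminate the global parameter on the ruled surface $P$ (the paper calls it $n=L^2/(g+1)$, using $K_{P/B}\cdot L=-n$) by expressing it through the definition of $s_2$.

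The one concrete gap is in your identification of the correction $r$ in $K_f^2=K_{\wt S/B}^2+r$. You attribute the $(-1)$-curves on $\wt S$ only to the multiplicity-$(g+2)$ singularities, but in fact \emph{every} $(2k+1\to 2k+1)$ singularity produces one such $(-1)$-curve, and each multiplicity-$(g+2)$ singularity produces \emph{two}. This is precisely Lemma~\ref{numberofcontraction}, which identifies the $(-1)$-curves in the fibers of $\tilde f$ with the isolated vertical $(-2)$-curves $\hat V\subset\hat R$ and gives
\[
r \;=\; 2s_{g+2}+\sum_{k=1}^{[g/2]}s_{2k+1}.
\]
The same count enters when you strip $\hat V$ from $\hat R$ to pass from $(K_{\hat P/B}+\hat R)\cdot\hat R$ to $s_2=(K_{\hat P/B}+R')\cdot R'$, so the lemma is invoked twice. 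With it in hand, the $s_{2k+1}$-coefficient of $K_{\tilde f}^2=2(K_{\hat P/B}+\hat L)^2$ comes out as $a_k/(2g+1)-1$, and adding back the $+1$ from $r$ yields $a_k/(2g+1)$; for $s_{g+2}$ the shift is $+2$, which is what turns the extrapolated coefficient into $(g-1)(3g+1)/(2g+1)$. Your description of the $s_{g+2}$ shift as coming from ``a $(-1)$-curve'' would therefore give the wrong constant.
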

For the convenience, we reproduce a proof of Theorem \ref{xiaotheorem}.
To start it, we need
\begin{lemma}[\cite{xiao91,xiao92}]\label{numberofcontraction}
Let $F$ be a singular fiber of the fibration $f$,
and $\wt F$ (resp. $\hat \Gamma$)
the corresponding fiber in $\wt S$ (resp. $\hat P$).
Then the $(-1)$-curves in $\wt F$ are in one-to-one correspondence to the isolated $(-2)$-curves
of $\hat R$, which are also contained in $\hat \Gamma$.
And the number of these $(-1)$-curves is equal to
$$2s_{g+2}(F)+\sum_{k=1}^{[\frac{g}{2}]} s_{2k+1}(F).$$
\end{lemma}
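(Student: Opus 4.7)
The plan is to establish a one-to-one correspondence via the double cover $\tilde\pi$, and then to carry out a case-by-case count of the isolated $(-2)$-curves of $\hat R$ in $\hat \Gamma$ using the structure of the canonical resolution $\hat\psi$.

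For the correspondence, I would first show that every $(-1)$-curve $E \subseteq \wt F$ lies in the ramification divisor of $\tilde\pi$. If $E$ were $\tilde\sigma$-invariant but not pointwise fixed, then $\tilde\sigma|_E$ would be a non-trivial involution of $\mathbb P^1$ with two fixed points, which would be isolated fixed points of $\tilde\sigma$ on $\wt S$; this would contradict the smoothness of $\wt P$. If instead $\tilde\sigma(E) \neq E$, then $\vartheta(E)$ and $\vartheta(\tilde\sigma(E)) = \sigma(\vartheta(E))$ are $\sigma$-related curves in $S$, and a self-intersection calculation together with the relative minimality of $f$ rules out this configuration as well. Hence $E$ is pointwise fixed by $\tilde\sigma$, and $C := \tilde\pi(E)$ is a smooth rational component of $\wt R$ satisfying $\tilde\pi^*C = 2E$; from $(\tilde\pi^*C)^2 = 2 C^2$ it follows that $C^2 = -2$. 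Pushing $C$ forward through $\tilde\psi \colon \wt P \to \hat P$ produces an isolated $(-2)$-curve of $\hat R$ in $\hat\Gamma$: distinct components of $\wt R$ are disjoint by the smoothness of $\wt R$, and $\tilde\psi$ is an isomorphism near $C$ since $C$ does not meet any negligible singularity of $\hat R$. The reverse direction is analogous: an isolated $(-2)$-curve of $\hat R$ does not meet any singularity of $\hat R$, so it lifts intact to an isolated $(-2)$-curve in $\wt R$, and its preimage under $\tilde\pi$ is a $(-1)$-curve in $\wt F$.

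Having established the bijection, I would count the isolated $(-2)$-curves of $\hat R$ in $\hat\Gamma$ by a case analysis on the non-negligible singularities of $R$ above $f(F)$. For a singularity of type $(2k+1 \to 2k+1)$, the first exceptional divisor $\hat{\mathcal E}_i$ enters $\hat R_i$ with multiplicity one, and the subsequent blowup at the unique singular point $y_i \in \hat{\mathcal E}_i$ of multiplicity $2k+2$ detaches it from the proper transforms of the other branches of $\hat R_i$; the resulting proper transform of $\hat{\mathcal E}_i$ in $\hat P$ is a smooth rational $(-2)$-curve in $\hat R$, isolated because its only remaining intersection is with $\hat{\mathcal E}_{i+1}$, which is not contained in $\hat R$. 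For a singularity of multiplicity $g+2$ counted by $s_{g+2}(F)$, a parallel but longer analysis of the chain of blowups in $\hat\psi$ produces exactly two isolated $(-2)$-curves of $\hat R$. All other singularities (of even multiplicity $2k \leq g$, or of odd multiplicity not of $(2k+1 \to 2k+1)$ type) contribute no such curve: either the relevant exceptional divisor has even multiplicity in the pullback and hence does not sit in $\hat R$, or it remains joined to other components of $\hat R$ through subsequent blowups.

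The main technical hurdle is verifying the count of two isolated $(-2)$-curves per multiplicity-$(g+2)$ singularity. This requires a careful local analysis of the blowup chain in $\hat\psi$, tracking how the tangent directions of the branches of $R$ evolve and identifying the two distinguished exceptional divisors whose proper transforms become isolated $(-2)$-curves in $\hat R$. Once this local computation is settled, summing over all non-negligible singularities of $R$ above $f(F)$ yields the claimed total $2s_{g+2}(F) + \sum_{k=1}^{[g/2]} s_{2k+1}(F)$.
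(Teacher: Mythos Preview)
The paper does not supply its own proof of this lemma; it is quoted from Xiao's works \cite{xiao91,xiao92} and used as a black box. So there is no proof in the paper to compare against, and your outline must be judged on its own.

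Your bijection argument is essentially right, but one step is not quite correct as written. In the case where $E$ is $\tilde\sigma$-invariant but not pointwise fixed, you claim the two fixed points of $\tilde\sigma|_E$ would be isolated fixed points of $\tilde\sigma$ on $\wt S$; that fails if those two points happen to lie on the fixed curve of $\tilde\sigma$. A cleaner route avoids the case split entirely: since $f$ is relatively minimal, every $(-1)$-curve in $\wt F$ must be an exceptional curve of $\vartheta$ (a short self-intersection/arithmetic-genus check rules out the alternative that it is a strict transform), and the exceptional curve over an isolated fixed point of $\sigma$ is automatically pointwise fixed by $\tilde\sigma$ because $d\sigma$ acts as $-\mathrm{id}$ on the tangent space there. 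The rest of your bijection (pushing down to $\hat P$ through $\tilde\psi$) is fine.

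The genuine gap is in the $s_{g+2}$ count. Your phrase ``a parallel but longer analysis of the chain of blowups'' suggests the second isolated $(-2)$-curve is another exceptional divisor appearing further along the resolution. It is not. For a $(g+2\to g+2)$ singularity at $y_0$ (recall $g$ is odd here, else $s_{g+2}=0$), the first blowup produces the isolated $(-2)$-curve $\wt{\mathcal E}_1$ exactly as in your $(2k+1\to 2k+1)$ analysis, and there are no further exceptional $(-2)$-curves attached to this pair. The \emph{second} isolated $(-2)$-curve is the strict transform of the ruling fibre $\Gamma$ of $h$ through $y_0$: one must show that $\Gamma$ is forced to be a component of $R$ (this uses the minimality of $R^2$ in Lemma~\ref{contractionbyxiao}: if $\Gamma\not\subseteq R$, a pair of elementary transformations centred at $y_0$ and $y_1$ produces a new geometrically ruled model with strictly smaller $R^2$ and all multiplicities still $\le g+2$), and then check that after the two blowups its strict transform has self-intersection $-2$ and is disjoint from the rest of $\hat R$. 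This is a different mechanism from tracking exceptional divisors, and your outline does not account for it.
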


\begin{proof}[Proof of Theorem {\rm\ref{xiaotheorem}}]
Let
\begin{equation}\label{n=delta^2/g+1}
n=\frac{L^2}{g+1}.
\end{equation}
Then $K_{P/B}\cdot L=-n$, where $K_{P/B}=K_P- h^*K_B$.
As $\hat\psi:\, \hat P \to P$ is a
minimal even resolution of non-negligible singularities of $R$,
by Definition \ref{definitionofs_i} and the formula \eqref{eqncomupteinvdouble}, one gets
(we should remark that the minimality of the self-intersection $R^2$ in Lemma \ref{contractionbyxiao} implies that $s_{g+2}=0$ if $g$ is even;
otherwise, there is a singularity $p$ of $R$ with multiplicity $g+2$.
In this case, an elementary transformation of $P$ centered at $p$ gives another contraction $\psi':\wt P \to P'$
with $\big(\psi'(\wt R)\big)^2< R^2$, which is a contradiction to the minimality of $R^2$.)
$$\begin{aligned}
\hat L^2&=L^2-\frac{g^2+4g+5}{2}s_{g+2}
-\sum_{k=1}^{[\frac{g}{2}]}(2k^2+2k+1)s_{2k+1}-\sum_{k=2}^{[\frac{g+1}{2}]}k^2s_{2k},\\
K_{\hat P/B}\cdot\hat L&=K_{P/B}\cdot L+(g+2)s_{g+2}+
\sum_{k=1}^{[\frac{g}{2}]}(2k+1)s_{2k+1}+\sum_{k=2}^{[\frac{g+1}{2}]}ks_{2k}.
\end{aligned}$$
So by the definition of $s_2$ and Lemma \ref{numberofcontraction}, one has
$$\begin{aligned}
&s_2-4s_{g+2}-2\sum\limits_{k=1}^{[\frac{g}{2}]} s_{2k+1}=(K_{\hat P/B}+\hat R)\cdot \hat R=(K_{\hat P/B}+2\hat L)\cdot 2\hat L\\
=~&(4g+2)n-2(g^2+3g+3)s_{g+2}
-\sum_{k=1}^{[\frac{g}{2}]}2(4k^2+2k+1)s_{2k+1}-\sum_{k=2}^{[\frac{g+1}{2}]}2k(2k-1)s_{2k}.
\end{aligned}$$
Hence
\begin{equation}\label{n=s_i}
n=\frac{1}{2(2g+1)}s_2+\frac{g^2+3g+1}{2g+1}s_{g+2}
+\sum_{k=1}^{[\frac{g}{2}]}\frac{4k^2+2k}{2g+1}s_{2k+1}+\sum_{k=2}^{[\frac{g+1}{2}]}\frac{2k^2-k}{2g+1}s_{2k}.
\end{equation}
Let $b=g(B)$ be the genus of $B$ and $K_{\hat P/B}=K_{\hat P}- \hat h^*K_B$.
Note that all singular points of $\hat R \subseteq \hat P$
are negligible (cf. Definition \ref{defofniglig})
by construction. According to the standard formulas for double covers (cf. \cite[\S\,V.22]{bhpv}), we get
$$\begin{aligned}
\chi_{\tilde f}&
=2\chi(\mathcal O_{\hat P})+\frac12(\hat L^2+K_{\hat P}\cdot \hat L)-(g-1)(b-1)
=\frac12(\hat L^2+K_{\hat P/B}\cdot \hat L)\\
&=\frac{gs_2+(g^2-2g-1)s_{g+2}}{4(2g+1)}
   +\sum_{k=1}^{[\frac{g}{2}]} \frac{k(g-k)}{2g+1}s_{2k+1}
   +\sum_{k=2}^{[\frac{g+1}{2}]}\frac{k(g-k+1)}{2(2g+1)}s_{2k},\\[0.2cm]
K_{\tilde f}^2&
=2(\hat L+K_{\hat P})^2-8(g-1)(b-1)
=2(\hat L+K_{\hat P/B})^2\\
&=\frac{g-1}{2g+1}s_2+\frac{3(g^2-2g-1)}{2g+1}s_{g+2}+\sum_{k=1}^{[\frac{g}{2}]}
\left(\frac{a_k}{2g+1}-1\right)s_{2k+1}+\sum_{k=2}^{[\frac{g+1}{2}]}\frac{b_k}{2g+1}s_{2k}.~\,\qquad
\end{aligned}$$
Note that $\chi_{f}=\chi_{\tilde f}$, and
$K_f^2=K_{\tilde f}^2+2s_{g+2}+\sum\limits_{k=1}^{[g/2]} s_{2k+1}$
by Lemma \ref{numberofcontraction}.
The theorem follows immediately.
\end{proof}

\section{Hyperelliptic fibrations with positive relative irregularity}\label{sectionpositive}
The purpose of this section is to prove the following inequality for
a locally non-trivial hyperelliptic fibration with positive relative irregularity.
\begin{proposition}\label{indeplemma}
Let $f:\,S \to B$ be a hyperelliptic fibration of genus $g$, which is not locally trivial.
Let $s_i$ $(2\leq i\leq g+2)$ be the $i$-th singularity index of $f$ defined in Definition {\rm \ref{definitionofs_i}}.
Assume that the relative irregularity $q_f>0$.
Then
\begin{equation}\label{s2leqDelta}\hspace{-3mm}
\begin{aligned}
&s_2+\sum_{k=1}^{q_f-1}4k(2k+1) s_{2k+1}+\sum_{k=2}^{q_f}2k(2k-1) s_{2k}\\
\leq~& \sum_{k=q_f}^{[\frac{g}{2}]}\frac{(2k+1)(2g+1-2k)}{g+1} s_{2k+1}
+\sum_{k=q_f+1}^{[\frac{g+1}{2}]}\frac{2k(g+1-k)}{g+1} s_{2k}+(g+1) s_{g+2}.
\end{aligned}
\end{equation}
\end{proposition}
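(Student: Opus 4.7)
The plan is to exploit the observation from the introduction that, when $q_f>0$, the double cover $\tilde\pi:\wt S\to\wt P$ is fibred. The mechanism is as follows. By the Fujita decomposition \eqref{fujitadecom}, $q_f$ equals the rank of the trivial summand $\mathcal O_B^{\oplus q_f}$ of $f_*\omega_{S/B}$. Since the hyperelliptic involution $\tilde\sigma$ acts by $-1$ on $\omega_F$ for a general fiber $F$, it acts by $-1$ on $f_*\omega_{\wt S/B}=f_*\omega_{S/B}$. Now $\tilde\pi_*\omega_{\wt S/B}=\omega_{\wt P/B}\oplus(\omega_{\wt P/B}\otimes\wt L)$, with the first summand $\tilde\sigma$-invariant and the second anti-invariant. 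Hence the whole trivial summand, and in particular the anti-invariant piece of rank $q_f$, lands inside $\tilde h_*(\omega_{\wt P/B}\otimes\wt L)$, giving a trivial sub-sheaf $\mathcal O_B^{\oplus q_f}\hookrightarrow\tilde h_*(\omega_{\wt P/B}\otimes\wt L)$.

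My first concrete step would be to turn this trivial sub-sheaf into $q_f$ effective divisors $D_1,\dots,D_{q_f}\in |\omega_{\wt P/B}\otimes\wt L|$ whose restrictions to a general ruling $\Gamma_0\cong\mathbb P^1$ cut out the same linear system (namely, the pullback of $q_f$ independent elements of $H^0(\mathcal O_B^{\oplus q_f})$ after trivializing over a neighborhood). Their common base locus, viewed on $\wt P$, produces a horizontal curve (or scheme) $\Sigma$ whose numerical invariants relative to $\tilde h$ are determined by $q_f$ and $\wt L\cdot\Gamma_0=g+1$. Descending $\Sigma$ through the sequence of blow-ups $\psi=\hat\psi\circ\tilde\psi:\wt P\to\hat P\to P$, I would obtain an effective divisor on the geometrically ruled $P$ and track how it interacts with the branch divisor $R$ at each non-negligible singularity.

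Next I would extract \eqref{s2leqDelta} by intersecting $\Sigma$ (or an appropriate associated divisor on $\hat P$) with $\hat L$ and with each exceptional configuration coming from a singularity classified in Definition~\ref{definitionofs_i}. A singularity of (odd) multiplicity $2k{+}1$ or (even) multiplicity $2k$ imposes a vanishing order on the sections of $\omega_{\wt P/B}\otimes\wt L$ that depends on whether the local picture of $\Sigma$ can ``absorb'' the multiplicity: for $k<q_f$ the $q_f$ independent sections cannot all vanish to the required order, forcing $\Sigma$ through the singular point with substantial multiplicity, which gives the large coefficients $4k(2k{+}1)$ and $2k(2k{-}1)$ on the left-hand side; for $k\geq q_f$ the constraint becomes a bound rather than a forced passage, producing the coefficients $(2k{+}1)(2g{+}1{-}2k)/(g{+}1)$ and $2k(g{+}1{-}k)/(g{+}1)$ on the right, with the denominator $g{+}1$ reflecting the normalization $n=L^2/(g{+}1)$ from \eqref{n=delta^2/g+1}. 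The term $(g{+}1)s_{g+2}$ similarly arises from the maximal multiplicity case.

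The main obstacle I expect is the local bookkeeping at the two very different kinds of singularities: the $(2k{+}1\to 2k{+}1)$ pairs, where the second blow-up at $y_i$ creates an additional constraint not visible on the ruled surface $P$, and the even-multiplicity singularities, where Definition~\ref{definitionofs_i} excludes the components already counted in an adjacent odd-to-odd pair. Keeping these contributions disjoint and matching the precise quadratic-in-$k$ weights on both sides will most likely require organizing the argument by the explicit change in $L_i$ given by \eqref{eqncomupteinvdouble} and comparing it summand-by-summand to the formula for $n$ in \eqref{n=s_i}. I would expect that, after this comparison, \eqref{s2leqDelta} follows from a single global inequality of the form $\Sigma\cdot\hat L\geq 0$ (or from the nefness of a related divisor on $\hat P$), with the LHS and RHS of \eqref{s2leqDelta} appearing as the negative and positive contributions, respectively.
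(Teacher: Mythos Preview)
Your opening sentence correctly names the key input---that $\tilde\pi$ is fibred---but you then abandon it and try to run a different mechanism through sections of $\omega_{\wt P/B}\otimes\wt L$ and a putative ``common base locus'' $\Sigma$. That detour is where the gap lies. First, $q_f$ sections of a single line bundle on a surface generically have $0$-dimensional common zeroes, so the existence of a horizontal curve $\Sigma$ with controlled numerics is an assertion, not a consequence. Second, your heuristic for the dichotomy $k<q_f$ versus $k\geq q_f$ (``the $q_f$ sections cannot all vanish to the required order'') does not produce the precise quadratic coefficients $4k(2k{+}1)$, $2k(2k{-}1)$ on the left nor the fractions $(2k{+}1)(2g{+}1{-}2k)/(g{+}1)$, $2k(g{+}1{-}k)/(g{+}1)$ on the right; you are pattern-matching to \eqref{n=s_i} rather than deriving the threshold. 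Third, the proposed endgame ``$\Sigma\cdot\hat L\geq 0$'' is not a single inequality you can point to: $\hat L$ is not nef in general, and you have not said what $\Sigma$ is on $\hat P$.

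The paper's proof uses the fibred structure directly and much more simply. The second fibration $\tilde h':\wt P\to\bbp^1$ has $\wt R$ contained in its fibres (Lemma~\ref{wtR=Gamma_i}). One then factors $\psi=\check\psi\circ\bar\psi$ so that $\bar h'$ still exists on $\ol P$; any $(-1)$-curve $\mathcal E$ contracted by $\check\psi$ is horizontal for $\bar h'$, and a Hurwitz count on $\bar h'|_{\mathcal E}$ forces $\mathcal E\cdot\ol R\geq 2(q_f{+}1)$ (Lemma~\ref{ERgeq2qf+2}). This is the geometric source of the threshold: $\check\psi$ contributes only to $s_i$ with $i>2q_f$ (Corollary~\ref{contributeofs_i>2q_f}). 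Finally, since $\ol R$ sits in fibres of $\bar h'$, it is semi-negative definite, giving $\ol R^2\leq -2\big(\sum \check s_{2k+1}+2\check s_{g+2}\big)$; comparing this with $R^2=4(g{+}1)n$ and the formula \eqref{n=s_i} yields \eqref{s2leqDelta} by pure algebra. So the single global inequality you were hoping for is $\ol R^2\leq 0$ (refined by the isolated $(-2)$-curves), not an intersection with an auxiliary $\Sigma$; and the $q_f$-threshold comes from a ramification bound on exceptional curves, not from vanishing orders of sections.
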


In order to prove the above proposition,
we always assume that $q_f>0$ in the section.
Let $\tilde\pi:\,\wt S \to \wt P$ be the induced double cover
with branch divisor $\wt R \subseteq \wt P$ as in Figure\,\ref{hyperellipticdiagram}.
The strategy of the proof is as follows. The starting point is the observation that
the double cover $\tilde\pi$ is fibred (see \cite{khashin83} or \cite[Definition\,6.2]{luzuo14} for the definition).
Based on this observation, we prove that there exists a fibration $\tilde h':\,\wt P \to \mathbb P^1$ such that the branch divisor $\wt R$
of $\tilde\pi$ is contained in exactly $2q_f+2$ fibers of $\tilde h'$.
Let $\psi$ be the contraction in Lemma \ref{contractionbyxiao}. We decompose it into $\bar\psi:\wt P \to \overline P$
and $\check\psi:\overline P \to P$ such that there is an induced fibration $\bar h':\,\overline P \to \mathbb P^1$
and any $(-1)$-curve contracted by $\check \psi$ is not contracted by $\bar h'$.
Then Proposition \ref{indeplemma} follows from the facts that the contraction $\check \psi$
contributes only to $s_i$ with $i>2q_f$ and $\overline R=\bar\psi(\wt R)$ is semi-negative definite.

The fact that the double cover $\tilde\pi$ is fibred, is proved in \cite{luzuo14} under the extra assumption that $f:\,S \to B$ is semi-stable.
But the proof there does not use this assumption. Hence we have
\begin{proposition}[{\cite[Propositions 6.4 and 6.5]{luzuo14}}]\label{fibredprop}
The double cover $\tilde\pi:\,\wt S \to \wt P$ in Figure\,\ref{hyperellipticdiagram} is fibred, i.e.,
there exists a double cover $\pi': B' \to \bbp^1$ of smooth projective
curves and two morphisms $\tilde f':\, \wt S\to B' $ and $\tilde h':\, \wt P \to \bbp^1$, such that
the diagram
\begin{center}\mbox{}
\xymatrix{
 \wt S \ar@{->}[rr]^-{\tilde\pi}\ar@{->}[d]_-{\tilde f'} && \wt P \ar@{->}[d]^-{\tilde h'}\\
 B' \ar@{->}[rr]^-{\pi'}                  && \bbp^1
}
\end{center}
is commutative, $\wt R$ is contained in the fibers of $\tilde h'$ and
\begin{equation}\label{boundofq_f}
q_f=q(\wt S)-q(\wt P) = g(B')\leq \frac{g+1}{2}.
\end{equation}
\end{proposition}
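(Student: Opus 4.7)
The plan is to extract the fibered structure from the action of the hyperelliptic involution $\tilde\sigma$ on $H^{1,0}(\wt S)$ and the fact that $\wt P$ is birationally a ruled surface over $B$. First I would decompose
\[
H^0(\wt S,\Omega^1_{\wt S}) \;=\; V^+\oplus V^-
\]
into the $(\pm 1)$-eigenspaces of $\tilde\sigma^*$. Because $\tilde\pi:\wt S \to \wt P$ is the quotient by $\tilde\sigma$, the invariant part satisfies $V^+=\tilde\pi^*H^0(\wt P,\Omega^1_{\wt P})$, which has dimension $q(\wt P)=b$ since $\wt P$ is ruled over $B$. Hence $\dim V^- = q(\wt S)-b = q-b = q_f$, which is positive by hypothesis.

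The key observation is the wedge vanishing $V^-\wedge V^- = 0$ inside $H^0(\wt S,\omega_{\wt S})$: for any $\omega_1,\omega_2 \in V^-$ the $2$-form $\omega_1\wedge\omega_2$ is $\tilde\sigma^*$-invariant, so it lies in $H^0(\wt S,\omega_{\wt S})^+ = H^0(\wt P,\omega_{\wt P})$; but $\wt P$ is birationally ruled, whence $p_g(\wt P)=0$ and this space vanishes. Applying the generalized Castelnuovo--de Franchis theorem (due to Catanese) to the subspace $V^-$ when $q_f\ge 2$, and handling the case $q_f=1$ directly via the Albanese map determined by the single anti-invariant form (using that an elliptic curve modulo $[-1]$ is $\bbp^1$), then produces a fibration $\tilde f':\wt S \to B'$ onto a smooth curve with $V^-=(\tilde f')^*H^0(B',\Omega^1_{B'})$ and $g(B')=\dim V^-=q_f$.

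The involution $\tilde\sigma$ preserves $V^-$, so by the uniqueness of the Castelnuovo--de Franchis fibration attached to $V^-$ it descends to an involution $\iota$ on $B'$. Since $V^-=H^0(B',\Omega^1_{B'})$ and $\tilde\sigma^*$ acts as $-1$ on $V^-$, the induced $\iota^*$ equals $-1$ on all of $H^0(B',\Omega^1_{B'})$; hence $H^0(B'/\langle\iota\rangle,\Omega^1)=0$ and $\pi':B' \to B'/\langle\iota\rangle \cong \bbp^1$ is a double cover. Passing to quotients in the equivariant pair $(\tilde\sigma,\iota)$ then yields $\tilde h':\wt P=\wt S/\langle\tilde\sigma\rangle \to B'/\langle\iota\rangle = \bbp^1$ and the commutative square of the proposition.

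It remains to verify that $\wt R$ is contained in fibers of $\tilde h'$ and to prove $q_f \le (g+1)/2$. Any point $r\in\wt R$ is the $\tilde\pi$-image of a unique $\tilde r\in\wt S$ fixed by $\tilde\sigma$; then $\tilde f'(\tilde r)\in B'$ is fixed by $\iota$ and hence lies in the finite branch locus of $\pi'$, so $\tilde h'(\wt R)$ is a finite subset of $\bbp^1$ and $\wt R$ lies in finitely many fibers of $\tilde h'$. For the numerical bound I restrict $\tilde f'$ to a general smooth fiber $F$ of $f\circ\vartheta$ (a hyperelliptic curve of genus $g$): the induced morphism $F\to B'$ has some degree $d$ and is equivariant for the hyperelliptic involution on $F$ and $\iota$ on $B'$, so Riemann--Hurwitz gives $2g-2 \ge d(2q_f-2)$. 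The hard part will be ruling out $d=1$: if $d=1$ then every general fiber is isomorphic to $B'$, forcing $q_f=g(B')=g$, so by \cite{beaville82} $\wt S$ would be birational to $B\times B'$ compatibly with $f\circ\vartheta$, and the uniqueness of the relatively minimal model would make $f$ locally trivial, contradicting the hypothesis. Once $d\ge 2$ is secured, the bound $q_f \le 1+(g-1)/d \le (g+1)/2$ follows immediately.
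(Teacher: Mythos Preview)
The paper does not supply a self-contained proof of this proposition; it simply cites \cite[Propositions~6.4 and 6.5]{luzuo14} and remarks that the semi-stability assumption there is unnecessary. Your route via the eigenspace decomposition of $H^0(\wt S,\Omega^1_{\wt S})$ under $\tilde\sigma^*$, the wedge-vanishing $V^-\wedge V^-=0$ coming from $p_g(\wt P)=0$, and Castelnuovo--de Franchis is exactly the natural one and almost certainly coincides with the cited argument.

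There is, however, a genuine logical gap in your write-up. Castelnuovo--de Franchis, applied to $V^-$, only yields a fibration $\tilde f':\wt S\to B'$ with
\[
V^- \;\subseteq\; (\tilde f')^*H^0(B',\Omega^1_{B'}),
\]
hence $g(B')\geq q_f$; it does \emph{not} give the equality $V^-=(\tilde f')^*H^0(B',\Omega^1_{B'})$ unless $V^-$ is already known to be a maximal wedge-isotropic subspace. You then use this unproven equality to deduce that $\iota^*=-1$ on all of $H^0(B',\Omega^1_{B'})$ and that $B'/\langle\iota\rangle\cong\bbp^1$, so the argument is circular as written. The same issue arises in your treatment of the case $q_f=1$: the Stein factorisation of the map to the elliptic curve produces $B'$ with $g(B')\geq 1$, not automatically $g(B')=1$.

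The fix is to reverse the order. Once $\tilde\sigma$ has been shown to descend to an involution $\iota$ on $B'$, set $W:=(\tilde f')^*H^0(B',\Omega^1_{B'})$; this is $\tilde\sigma^*$-stable, so $W=W^+\oplus W^-$ with $W^\pm\subseteq V^\pm$, and clearly $W^-=V^-$. Now $W^+\subseteq V^+=\tilde f^*H^0(B,\Omega^1_B)$ consists of forms pulled back simultaneously from $B$ via $\tilde f$ and from $B'$ via $\tilde f'$. Any nonzero such form would force the generic fibres of $\tilde f$ and $\tilde f'$ to coincide (both are integral curves of the rank-one kernel foliation), hence $\tilde f=\tilde f'$ up to isomorphism of the base, contradicting $0\neq V^-\subseteq W$ and $V^-\cap V^+=0$. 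Thus $W^+=0$, giving $g(B')=q_f$ and $g(B'/\langle\iota\rangle)=0$ in one stroke. With this repair, the rest of your argument (the containment of $\wt R$ in fibres of $\tilde h'$, and the Riemann--Hurwitz bound via the restriction to a general fibre $F$ together with the exclusion of $d=1$ by Beauville's theorem) goes through.
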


%
%

\begin{remark}
Let $f:\,S\to B$ be as in the Proposition \ref{indeplemma} with $g(B)\geq 1$, and $d\geq 2$ the degree of the Albanese map $S\to {\rm Alb\,}(S)$.
Xiao (\cite{xiao92-0}) proved a more precise description on $q_f$:
$$\frac{g+1}{d}-1\leq q_f\leq \frac{g-1}{d}+1.$$
\end{remark}

Based on Proposition \ref{fibredprop}, we would like to show that the branch divisor $\wt R$
of $\tilde\pi:\,\wt S \to \wt P$ has a very special form which is described in Lemma \ref{wtR=Gamma_i}.

Let $\tilde f':\, \wt S \to B'$ be the fibration in
Proposition \ref{fibredprop}.
Since $g(B')=q_f\geq1$,
it follows that any $(-1)$-curve in $\wt S$ is contracted by $\tilde f'$.
Hence $\tilde f'$ factors through $\vartheta:\,\wt S \to S$.
Let $f':\,S \to B'$ be the induced map.
Note that the fibration $\tilde f':\, \wt S \to B'$ in Proposition \ref{fibredprop}
is clearly unique.
Hence the hyperelliptic involution $\sigma$ induces an involution $\sigma'$ on $B'$
such that $B'/\langle\sigma'\rangle\cong \bbp^1$ with the following diagram:
\begin{figure}[H]
\begin{center}\mbox{}
  \xymatrix@C=1.1cm{
     \wt S \ar@/^5mm/"1,3"^{\tilde f'} \ar@/_5mm/"3,1"_{\tilde f}
     \ar@{->}[d]^-{\tilde\pi}\ar@{->}[r]_-{\vartheta} &S\ar@{->}[r]_-{f'}& B'\ar@{->}[d]_-{\pi'}\\
      \wt P \ar@{->}[rr]^-{\tilde h'} \ar@{->}[d]^-{\tilde h}
           & &           \bbp^1\cong B'/\langle\sigma'\rangle\\
           B&&}
\end{center}
\caption{Hyperelliptic fibration with positive relative irregularity.\label{bimapdiagram}}\vspace{-0.4cm}
\end{figure}

Assume $\pi':\, B' \to \bbp^1$ is branched over $\Delta\subseteq \bbp^1$.
Applying Hurwitz formula to the double cover $\pi'$, one sees that $|\Delta|=2q_f+2$.
For any $y\in \Delta$, let $\wt\Gamma_y'=\sum \tilde n_C' C$ be the fiber of $\tilde h'$ over $y$,
and
$$\wt \Gamma_{y,{\rm o}}'\triangleq
\sum_{C\subseteq \wt \Gamma_y' \atop \tilde n_C'\text{~is odd}}  C\subseteq \wt \Gamma_y'.$$
According to Proposition \ref{fibredprop}, $\wt R$ is contained in the fibers of $\tilde h'$.
In fact, we can prove an explicit expression of $\wt R$.

\begin{lemma}\label{wtR=Gamma_i}
$$\wt R=\sum_{y\in\Delta} \wt \Gamma_{y,{\rm o}}'.$$
\end{lemma}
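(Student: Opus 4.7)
The plan is to establish two facts: first, $\wt R$ is supported on the fibres of $\tilde h'$ lying over $\Delta$; second, inside each such fibre $\wt\Gamma_y'$ the divisor $\wt R$ consists of exactly the components of odd multiplicity. Both facts are driven by the commutative square in Figure~\ref{bimapdiagram} together with the involution compatibility $\tilde f'\circ\tilde\sigma=\sigma'\circ\tilde f'$, so the work splits into a set-theoretic step about ramification and a local computation about parities.

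For the first fact, suppose some $v\in\wt R$ satisfied $\tilde h'(v)\notin\Delta$. Then $\tilde\pi$ is ramified at $v$, so the scheme-theoretic fibre $\tilde\pi^{-1}(v)$ consists of a single point $q\in\wt S$ fixed by $\tilde\sigma$. The equivariance of $\tilde f'$ forces $\tilde f'(q)$ to be a fixed point of $\sigma'$, hence a branch point of $\pi'$, so $\pi'(\tilde f'(q))\in\Delta$. But $\pi'(\tilde f'(q))=\tilde h'(\tilde\pi(q))=\tilde h'(v)\notin\Delta$, a contradiction. Combined with Proposition~\ref{fibredprop} this gives $\wt R\subseteq\bigcup_{y\in\Delta}\wt\Gamma_y'$.

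For the second fact, fix $y\in\Delta$ and let $b=(\pi')^{-1}(y)$. Choose local parameters $s$ on $\bbp^1$ at $y$ and $t$ on $B'$ at $b$ with $(\pi')^{*}s=t^{2}$, so $\sigma'^{*}t=-t$. Let $C\subseteq\wt\Gamma_y'$ be a component of multiplicity $n=\tilde n_C'$, and pick a general point $p\in C$ (smooth on $C$, avoiding the other components of $\wt\Gamma_y'$ and of $\wt R$). Choose a local coordinate $v$ on $\wt P$ at $p$ with $C=\{v=0\}$, so that $(\tilde h')^{*}s=v^{n}\cdot u$ for a local unit $u$, while locally $\tilde\pi$ is given by $z^{2}=v^{a_C}\cdot u'$ with $a_C=1$ if $C\subseteq\wt R$ and $a_C=0$ otherwise. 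The involution compatibility places $(\tilde f')^{*}t$ in the $(-1)$-eigenspace of $\tilde\sigma$ on $\tilde\pi_{*}\mathcal O_{\wt S}=\mathcal O_{\wt P}\oplus\wt L^{-1}$, so locally $(\tilde f')^{*}t=\beta\cdot z$ for some $\beta\in\mathcal O_{\wt P,p}$. Squaring gives $\beta^{2}\cdot v^{a_C}\cdot u'=v^{n}\cdot u$, i.e.\ $\beta^{2}=v^{n-a_C}\cdot(\text{unit})$; for $\beta$ to be regular, $n-a_C$ must be even, which, since $a_C\in\{0,1\}$, is equivalent to $a_C\equiv n\pmod{2}$. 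Hence $C\subseteq\wt R$ iff $n$ is odd, i.e.\ $\wt R\cap\wt\Gamma_y'=\wt\Gamma_{y,{\rm o}}'$, and summing over $y\in\Delta$ yields the lemma.

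The main obstacle is the local parity extraction in the second step: one must pass from the global commutative square to the local identity $\beta^{2}=v^{n-a_C}\cdot(\text{unit})$ and invoke regularity of $\beta$ to pin down the parity of $n$. Once $(\tilde f')^{*}t$ is recognised as anti-invariant under $\tilde\sigma$ and therefore of the form $\beta\cdot z$, the parity of $n$ is forced by whether $C$ lies in the branch divisor $\wt R$.
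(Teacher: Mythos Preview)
Your proof is correct and takes a genuinely different route from the paper. The paper argues globally: it forms the normalization $X$ of the fibre product $B'\times_{\bbp^1}\wt P$, observes that the induced double cover $X\to\wt P$ is branched precisely over $\sum_{y\in\Delta}\wt\Gamma_{y,{\rm o}}'$ (the standard description of the branch locus when a double cover of curves is pulled back along a fibration), and then checks that the canonical map $\gamma:\wt S\to X$ is an isomorphism because it has degree one and contracts no curve (any curve contracted by $\gamma$ would also be contracted by $\tilde\pi$). Your approach is instead entirely local: you use the equivariance $\tilde f'\circ\tilde\sigma=\sigma'\circ\tilde f'$ first to confine $\wt R$ to the fibres over $\Delta$, and then to force the parity condition on each fibre component via the anti-invariance of $(\tilde f')^*t$ and the identity $\beta^{2}=v^{\,n-a_C}\cdot(\text{unit})$. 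The paper's argument is shorter and more conceptual, packaging the entire parity analysis into the single normalization step; yours is more explicit and self-contained, making visible exactly how the branch divisor is read off from the multiplicities, at the price of carefully handling local coordinates (in particular you are implicitly using that $\wt R$ is smooth, which follows from the smoothness of $\wt S$, to write the double cover locally as $z^{2}=v^{a_C}u'$ near a general point of $C$).
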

\begin{proof}
Let $B'\times_{\bbp^1}\wt P$ be the fiber-product, and $X\to B'\times_{\bbp^1}\wt P$ the normalization.
By the universal property of the fiber-product (cf. \cite[\S\,II-2]{hartshorne}),
there exists a unique morphism
$\gamma':\, \wt S \to B'\times_{\bbp^1}\wt P$.
Since $\wt S$ is smooth, there also exists a unique morphism $\gamma:\,\wt S \to X$,
such that the following diagram is commutative.
\begin{center}\mbox{}
\xymatrix@C=1.1cm{
  \wt S \ar@/_1cm/"3,2"_-{\tilde f'} \ar@{->}[dr]_-{\exists \,!~\gamma}
  \ar@{->}[drr]^-{\exists \,!~\gamma'} \ar@/^7mm/"2,5"^-{\tilde\pi} &&&&\\
  &X\ar@{->}[r]^-{\pi_2}\ar@{->}[d]& B'\times_{\bbp^1}\wt P \ar@{->}[dl]\ar@{->}[rr]^-{\pi_1}
  &&\wt P \ar@{->}[d]^-{\tilde h'}\\
  &B'\ar@{->}[rrr]^-{\pi'}&&&\bbp^1
}
\end{center}
Clearly the composition $\pi_1\circ\pi_2:\,X\to \wt P$ is a double cover branched exactly over
$$\sum_{y\in\Delta} \wt \Gamma_{y,{\rm o}}'.$$
Therefore, it suffices to prove that $\gamma$ is an isomorphism.

As $\deg \tilde\pi=\deg (\pi_1\circ\pi_2)$, we get $\deg \gamma=1$, i.e.,
$\gamma:\, \wt S\to X$ is a contraction of curves.
Note that $\tilde\pi$ does not contract any curve.
Neither does $\gamma$ because any curve contracted by $\gamma$ must be also
contracted by $\tilde\pi$.
This completes the proof.
\end{proof}

The contraction $\psi:\, \wt P \to P$ in Lemma \ref{contractionbyxiao} is composed of several blowing-ups.
We divide those blowing-ups as $\psi=\check\psi\circ\bar\psi$,
where $\bar\psi:\, \wt P \to \ol P$ is the largest contraction such that
$\tilde h'$ factors through $\bar\psi$.
So we have the following diagram:
\begin{figure}[H]
\begin{center}\mbox{}
  \xymatrix@=1cm{
     &\bbp^1\cong B'/\langle\sigma'\rangle &\\
      \wt P \ar@{->}[r]^-{\quad\bar\psi} \ar@{->}[dr]_{\tilde h} \ar@{->}[ru]^-{\tilde h'}
 & \ol P\ar@{->}[r]^-{\check\psi}\ar@{->}[d]^-{\bar h}\ar@{->}[u]_-{\bar h'}& P \ar@{->}[dl]^-{h}\\
           &B&}
\end{center}
\caption{Decomposition of $\psi$.\label{bimapfactordiagram}}\vspace{-0.4cm}
\end{figure}

Next we want to show that the contraction $\check \psi$ contributes only to $s_i$ with $i>2q_f$. For this purpose, we first prove
%
%

\begin{lemma}\label{ERgeq2qf+2}
Let $\mathcal E\subseteq \ol P$ be any $(-1)$-curve contracted by $\check \psi$. Then $\mathcal E\cdot \ol R$ is even and
$$\mathcal E\cdot \ol R \geq 2(q_f+1).$$
\end{lemma}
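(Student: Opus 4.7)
The plan is to extract the bound from a double-cover genus computation combined with Riemann--Hurwitz applied along the fibration to $B'$. Let $\check\pi\colon\ol S\to\ol P$ denote the double cover determined by the pair $(\ol R,\ol L)$, where $\ol L$ is the line bundle inherited from $\wt L$, and let $\bar\pi_S\colon\wt S\to\ol S$ be the birational morphism induced by $\bar\psi$. The preparatory step is to descend $\tilde f'$ through $\bar\pi_S$ to a morphism $\bar f'\colon\ol S\to B'$ satisfying $\pi'\circ\bar f'=\bar h'\circ\check\pi$. This descent is legitimate because every exceptional curve $e$ of $\bar\psi$ is collapsed to a point by $\tilde h'$ (by the maximality of $\bar\psi$), so its $\tilde\pi$-preimage in $\wt S$ lies in the fiber of $\pi'\circ\tilde f'=\tilde h'\circ\tilde\pi$ over that point; being a disjoint union of connected pieces each mapping into the two-point set $\pi'^{-1}(\tilde h'(e))$, it is contracted by $\tilde f'$. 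Parity is then immediate: since $\mathcal O_{\ol P}(\ol R)\cong\ol L^{\otimes 2}$, one obtains $\mathcal E\cdot\ol R=2(\mathcal E\cdot\ol L)\in 2\mathbb Z$.

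Set $d:=\deg(\bar h'|_{\mathcal E})$; the maximality of $\bar\psi$ forces $d\geq 1$. Consider the double cover $\ol{\mathcal E}:=\check\pi^{-1}(\mathcal E)$ of $\mathcal E\cong\bbp^1$. I would next rule out the two degenerate configurations in which $\ol{\mathcal E}$ fails to be an irreducible reduced double cover, namely \emph{(i)} $\mathcal E\subseteq\ol R$, which would give $(\ol{\mathcal E})_{\mathrm{red}}\cong\bbp^1$, and \emph{(ii)} $\mathcal E\cdot\ol R=0$, which would split $\ol{\mathcal E}$ as $\mathcal E\sqcup\mathcal E$. In either case, $\bar f'$ restricted to a rational component would, upon composition with $\pi'$, recover $\bar h'|_{\mathcal E}$ of positive degree $d$, and would therefore itself be a non-constant morphism $\bbp^1\to B'$, contradicting $g(B')=q_f\geq 1$.

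With $\ol{\mathcal E}$ an irreducible reduced double cover of $\bbp^1$, the standard genus formula gives $p_a(\ol{\mathcal E})=\tfrac12\,\mathcal E\cdot\ol R-1$, so its normalization satisfies $g(\ol{\mathcal E}^{\,\nu})\leq\tfrac12\,\mathcal E\cdot\ol R-1$. On the other hand, $\pi'\circ(\bar f'|_{\ol{\mathcal E}})=\bar h'\circ\check\pi|_{\ol{\mathcal E}}$ has degree $2d$, so $\bar f'|_{\ol{\mathcal E}}\colon\ol{\mathcal E}\to B'$ has degree $d$, and Riemann--Hurwitz applied to $\ol{\mathcal E}^{\,\nu}\to B'$ yields $g(\ol{\mathcal E}^{\,\nu})\geq d(q_f-1)+1$. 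Combining,
\[
\mathcal E\cdot\ol R\;\geq\;2d(q_f-1)+4\;\geq\;2(q_f-1)+4\;=\;2(q_f+1),
\]
where the second inequality uses $d\geq 1$ and $q_f\geq 1$. The step I expect to require the most care is the descent producing $\bar f'$ together with the verification that $\ol{\mathcal E}$ and the genus estimate $g(\ol{\mathcal E}^{\,\nu})\leq p_a(\ol{\mathcal E})$ behave correctly despite the possibly singular geometry of $\ol S$ above the singular locus of $\ol R$; once this setup is secured, the two genus estimates close the argument.
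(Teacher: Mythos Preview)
Your argument is correct and reaches the same numerical bound $\mathcal E\cdot\ol R\geq 2d(q_f-1)+4$ as the paper, but the route is genuinely different. The paper never passes to the double cover $\ol S$: it stays on $\ol P$, applies Riemann--Hurwitz to the degree-$d$ map $\bar h'|_{\mathcal E}\colon \mathcal E\cong\bbp^1\to\bbp^1$, and compares the ramification $2d-2$ against the local intersection multiplicities of $\mathcal E$ with the full fiber divisor $\ol R_{\rm all}=(\bar h')^*(\Delta)$ and its ``multiplicity-one'' part $\ol R_{\rm r}\subseteq\ol R$. This yields $|\mathcal E\cap\ol R_{\rm r}|\geq 2d(q_f-1)+4$, and since $\ol R_{\rm r}\subseteq\ol R$ one concludes. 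Your approach instead exploits the genus of $B'$ directly, via Riemann--Hurwitz for $\ol{\mathcal E}^\nu\to B'$ combined with the arithmetic-genus bound $p_a(\ol{\mathcal E})=\tfrac12\,\mathcal E\cdot\ol R-1$ for the restricted double cover.

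What each buys: the paper's computation is entirely on the smooth surface $\ol P$ and avoids any singular intermediate surface, at the cost of a slightly delicate bookkeeping with $\ol R_{\rm all}$, $\ol R$, $\ol R_{\rm r}$. Your argument is conceptually cleaner---the bound $g(\ol{\mathcal E}^\nu)\geq d(q_f-1)+1$ is transparent---but you must justify the descent of $\tilde f'$ through the singular $\ol S$. That descent is fine once you note that $\ol S$, being a double cover of a smooth surface branched over a \emph{reduced} divisor, is normal; then Zariski's main theorem gives connected fibers for $\wt S\to\ol S$, and your contraction argument goes through. (Alternatively, you could sidestep $\ol S$ entirely by working with the strict transform $\wt{\mathcal E}\subseteq\wt P$, applying your genus argument to $\tilde\pi^{-1}(\wt{\mathcal E})$ and $\tilde f'$ directly, and using $\wt{\mathcal E}\cdot\wt R\leq\mathcal E\cdot\ol R$.) Your parity argument via $\ol L$ and your exclusion of the degenerate cases (i), (ii) are also sound; the paper handles both more cheaply by observing that $\ol R$ lies in fibers of $\bar h'$ while $\mathcal E$ dominates $\bbp^1$.
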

\begin{proof}
Let $(\ol R,\,\ol L;\,\ol\Gamma_y')$ be the image of $(\wt R,\,\wt L;\,\wt\Gamma_y')$ on $\ol P$, where $y\in\Delta$.
Let $\ol m=\mathcal E\cdot \ol R$, $\sigma:\, \ol P \to \ol P_1$ the contraction of $\mathcal E$,
$x$ the image of $\mathcal E$, and $(\ol R_1,\,\bar L_1)$ the image of $(\ol R,\,\bar L)$ on $\ol P_1$.
Then $x$ is a singularity of $\ol R_1$ of multiplicity $\ol m$, and $\mathcal E$ is mapped surjectively onto $\mathbb P^1$ by $\bar h'$ according to the construction of $\bar h'$ in Figure \ref{bimapfactordiagram}.
Let
$$
\ol\Gamma_{y,{\rm o}}' ={\sum_{C\subseteq \ol \Gamma_y' \atop \bar n_C'\text{~is odd}} C}
\text{\quad and\quad}
\ol \Gamma_{y,{\rm r}}'=\sum_{C\subseteq \ol \Gamma_y' \atop \bar n_C'=1} C,\qquad
\text{if~~~}\ol\Gamma_y'=\sum \bar n_C' C.$$
By Lemma\,\ref{wtR=Gamma_i},
$$\ol R=\bar\psi(\wt R)=\sum\limits_{y\in\Delta}\ol\Gamma_{y,{\rm o}}'$$
is contained in the fibers of $\bar h'$.
Hence
$\mathcal E \nsubseteq \ol R$,
from which it follows that $\ol m=\mathcal E\cdot \ol R$ is even.
Let
$$\ol R_{\rm all}=\sum_{y\in\Delta} \ol \Gamma_{y}'=(\bar h')^*(\Delta) \text{\quad and \quad}
\ol R_{\rm r}=\sum_{y\in\Delta} \ol \Gamma_{y,{\rm r}}'.$$
Then $\ol R_{\rm r}\subseteq \ol R\subseteq \ol R_{\rm all}$.
To complete the proof, it is enough to prove that
$$\mathcal E\cdot \ol R_{\rm r}\geq 2(q_f+1).$$

Note that the restricted morphism $\bar h'|_{\mathcal E}:\,\mathcal E \to \bbp^1$ is surjective.
For any $p\in \mathcal E\cap \ol R_{\rm all}$,
let $r_p=I_p(\mathcal E,\,\ol R_{\rm all})$ be the local intersection number.
Since $\ol R_{\rm all}=(\bar h')^*(\Delta)$ consists of $|\Delta|=2q_f+2$ fibers of $\bar h'$, one has
$$\sum_{p\in \mathcal E\cap \ol R_{\rm all}}
r_p=\mathcal E\cdot \ol R_{\rm all}=\deg(\bar h'|_{\mathcal E})\cdot (2q_f+2).$$
By definition, $r_p\geq 2$ for any
$p\in \left(\mathcal E\cap \ol R_{\rm all}\right) \setminus \left(\mathcal E\cap \ol R_{\rm r}\right)$.
On the other hand, as $\mathcal E$ is a $(-1)$-curve, the ramification number of
$\bar h'|_{\mathcal E}$ is $2\deg(\bar h'|_{\mathcal E})-2$.
So
\begin{eqnarray*}
2\deg(\bar h'|_{\mathcal E})-2 &\geq& \sum_{p\in \mathcal E\cap \ol R_{\rm all}} (r_p-1)
=\sum_{p\in \left(\mathcal E\cap \ol R_{\rm all}\right) \setminus \left(\mathcal E\cap \ol R_{\rm r}\right)} (r_p-1)
~+\sum_{p\in \mathcal E\cap \ol R_{\rm r}} (r_p-1)\\[.15cm]
&\geq&
\sum_{p\in \left(\mathcal E\cap \ol R_{\rm all}\right) \setminus \left(\mathcal E\cap \ol R_{\rm r}\right)} \frac{r_p}2
~+\sum_{p\in \mathcal E\cap \ol R_{\rm r}} \frac{(r_p -1)}2\\[.15cm]
&=& \frac12\sum_{p\in \mathcal E\cap \ol R_{\rm all}} r_p - \frac{\big|{\mathcal E\cap \ol R_{\rm r}}\big|}2
=\deg(\bar h'|_{\mathcal E})\cdot (q_f+1)-\frac{\big|{\mathcal E\cap \ol R_{\rm r}}\big|}2.
\end{eqnarray*}
Therefore
$$\mathcal E\cdot \ol R_{\rm r}\geq \big|{\mathcal E\cap \ol R_{\rm r}}\big|
\geq 2\deg(\bar h'|_{\mathcal E})\cdot(q_f-1)+4 \geq 2(q_f-1)+4=2(q_f+1).$$
The proof is complete.
\end{proof}

We assume that $\check\psi=\check\psi_1\circ\cdots\circ\check\psi_u$,
where $\check\psi_i: \check P_i \to \check P_{i-1}$ is a blowing-up at $\check x_{i-1}\in \check P_{i-1}$
with exceptional curve $\check {\mathcal E}_{i}\subseteq \check P_{i}$,
$\check P_0=P$ and $\check P_u=\ol P$.
Let $\check R_i$ be the image of $\ol R$ in $\check P_i$,
and $\check x_i$ a singularity of $\check R_i$ of multiplicity $\check m_i$.

\begin{lemma}\label{m_igeq2q_f+1}
For $1\leq i\leq u-1$, we have either $\check m_i \geq 2(q_f+1)$,
or $\check m_i=2q_f+1$ and $\check x_{i+1}$ is the unique singular point on
$\check {\mathcal E}_{i+1} \subseteq \check P_{i+1}$ of multiplicity $\check m_{i+1}=2q_f+2$.
\end{lemma}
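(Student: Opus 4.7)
The plan is to induct on $u-i$ (decreasing) and reduce the argument to Lemma \ref{ERgeq2qf+2}. The base case $i=u-1$ is immediate: $\check{\mathcal E}_u$ is a $(-1)$-curve in $\ol P$ with $\check{\mathcal E}_u\cdot\ol R=2[\check m_{u-1}/2]\geq 2(q_f+1)$.

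For the inductive step, using the canonical resolution formula $\check R_{i+1}=\check\psi_{i+1}^*\check R_i-2[\check m_i/2]\check{\mathcal E}_{i+1}$ one sees that
\[
\check{\mathcal E}_{i+1}\cdot\check R_{i+1}=2[\check m_i/2],
\]
equal to $\check m_i$ if $\check m_i$ is even and $\check m_i-1$ if $\check m_i$ is odd (in the latter case $\check{\mathcal E}_{i+1}$ is itself a component of $\check R_{i+1}$). Let $\ol{\mathcal E}$ be the strict transform of $\check{\mathcal E}_{i+1}$ in $\ol P$. If $\ol{\mathcal E}$ is a $(-1)$-curve in $\ol P$ then Lemma \ref{ERgeq2qf+2} gives $2[\check m_i/2]\geq 2(q_f+1)$, hence the first alternative $\check m_i\geq 2(q_f+1)$. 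Otherwise some later blowup center lies on the strict transform of $\check{\mathcal E}_{i+1}$; after commuting intermediate blowups whose centers lie off $\check{\mathcal E}_{i+1}$ (these blowups commute with $\check\psi_{i+1}$ in the resolution), I may assume $\check x_{i+1}\in\check{\mathcal E}_{i+1}$.

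In this second case, letting $A$ denote the (true) proper transform of $\check R_i$ in $\check P_{i+1}$ and $m_A$ its multiplicity at $\check x_{i+1}$, the identity $\check{\mathcal E}_{i+1}\cdot A=\check m_i$ (valid in both parities, as the $-\check{\mathcal E}_{i+1}^2=1$ correction in the odd case exactly compensates) combined with the local estimate $I_{\check x_{i+1}}(\check{\mathcal E}_{i+1},A)\geq m_A$ gives $\check m_i\geq m_A=\check m_{i+1}-\epsilon$, where $\epsilon=1$ if $\check m_i$ is odd and $\epsilon=0$ otherwise. Applying the inductive hypothesis to $i+1$ produces either $\check m_{i+1}\geq 2(q_f+1)$ or $\check m_{i+1}=2q_f+1$ with the specific second-alternative structure. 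A case-by-case parity analysis then forces $\check m_i\geq 2(q_f+1)$, except when $\check m_i$ is odd equal to $2q_f+1$ and $\check m_{i+1}=2q_f+2$. In that extremal case the chain inequality $\check m_i=\check{\mathcal E}_{i+1}\cdot A=2q_f+1\geq I_{\check x_{i+1}}(\check{\mathcal E}_{i+1},A)\geq m_A=2q_f+1$ is tight everywhere, so $\check x_{i+1}$ is the unique intersection point of $\check{\mathcal E}_{i+1}$ with $A$; since the singularities of $\check R_{i+1}=A+\check{\mathcal E}_{i+1}$ lying on $\check{\mathcal E}_{i+1}$ are exactly these intersection points, $\check x_{i+1}$ is the unique singular point of $\check R_{i+1}$ on $\check{\mathcal E}_{i+1}$, and it has multiplicity $\check m_{i+1}=m_A+1=2q_f+2$. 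This is precisely the second alternative of the lemma.

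The main obstacle will be the careful parity bookkeeping in the sub-case where $\check m_{i+1}=2q_f+1$ (Case B of the inductive hypothesis): one must rule out $\check m_i=2q_f+1$ by showing that the second-alternative structure for $i+1$, together with the configuration of $A$ on $\check{\mathcal E}_{i+1}$, is geometrically incompatible with $\check m_{i+1}=2q_f+1$. A secondary technical point is rigorously justifying the reordering of commuting blowups used to reduce to $\check x_{i+1}\in\check{\mathcal E}_{i+1}$ in the second case.
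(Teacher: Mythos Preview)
Your decreasing induction is a reasonable reorganization of the paper's argument, and the base case and the generic branch of the inductive step are handled correctly. But the proposal stops short at precisely the point that carries the lemma's weight: the sub-case $\check m_i=\check m_{i+1}=2q_f+1$, which you flag as ``the main obstacle'' without resolving it. This is not mere parity bookkeeping; it requires an intersection computation one level further down. Your inductive hypothesis at $i+1$ hands you $\check m_{i+2}=2q_f+2$ with $\check x_{i+2}$ the unique singularity on $\check{\mathcal E}_{i+2}$. Writing $\check R_{i+2}=A'+E'+\check{\mathcal E}_{i+2}$ (with $A',E'$ the strict transforms of $A$ and $\check{\mathcal E}_{i+1}$), uniqueness forces $\check x_{i+2}\in E'$ and forces $A'$ to have multiplicity $2q_f$ there, so $I_{\check x_{i+2}}(A',E')\geq 2q_f$. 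On the other hand $A'\cdot E'=A\cdot\check{\mathcal E}_{i+1}-m_A\cdot 1=(2q_f+1)-2q_f=1$, giving $1\geq 2q_f$, impossible for $q_f\geq 1$. This is exactly the paper's computation (its case $l=2$); your induction packages the setup more cleanly---the hypothesis delivers the $l=2$ structure directly rather than having to locate $l$---but it does not let you avoid the computation.

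On the reordering: your worry is legitimate, since reordering at step $i$ alters the very sequence on which the inductive hypothesis at $i+1$ was established. The paper makes essentially the same move when it writes ``we assume that $\check x_{i+j}$ is infinitely close to $\check x_{i+j-1}$ for $j=1,\dots,l$'', so this is a shared convention rather than a defect specific to your approach. A cleaner fix in your framework is not to reorder at all: in Case~(b), take $j>i$ to be the first index with $\check x_j$ lying on the strict transform of $\check{\mathcal E}_{i+1}$; since the intermediate centers avoid a neighborhood of that transform, the configuration of $\check R_j$ there coincides with that of $\check R_{i+1}$ along $\check{\mathcal E}_{i+1}$, and the inductive hypothesis applies directly at index $j$ without any commutation. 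The literal reference to $\check x_{i+1}$ in the second alternative then becomes an ordering convention, which the paper also adopts in Section~\ref{preliminaries}.
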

\begin{proof}
First, we show that $\check m_i \geq 2q_f+1$ for any $1\leq i\leq u-1$.
Since $\check \psi$ is a part of the even resolution of $R=\check R_0$, one has
\begin{equation}\label{m_im_i+1}
\left\{\begin{aligned}
&\text{if $\check m_i$ is even, then $\check{\mathcal E}_{i+1}\nsubseteq \check R_{i+1}$,
and so $\check m_{i+1} \leq \check m_i$;}\\
&\text{if $\check m_i$ is odd, then $\check{\mathcal E}_{i+1}\subseteq \check R_{i+1}$,
and so $\check m_{i+1} \leq \check m_i+1$.}
\end{aligned}\right.
\end{equation}
By induction, for any singularity $\check x_{i+j}$, infinitely close to $\check x_i$,
we have $\check m_{i+j} \leq \check m_i$ if $\check m_i$ is even,
and $\check m_{i+j} \leq \check m_i+1$ if $\check m_i$ is odd.
By Lemma \ref{ERgeq2qf+2}, $\check m_{i+j_i} \geq 2(q_f+1)$ for the last infinitely close singularity
$\check x_{i+j_i}$ introduced by $\check\psi$.
Thus $\check m_i \geq 2q_f+1$ as required.

Now we assume $\check m_i=2q_f+1$.
Note that we have already proved that $\check m_{i+1} \geq 2q_f+1$ in the above.
If $\check m_{i+1}=2q_f+2$,
then $\check x_{i+1}$ must be the unique singular point
of $\check R_{i+1}$ on $\check {\mathcal E}_{i+1} \subseteq \check P_{i+1}$,
and we are done.
Therefore it is enough to derive a contradiction if $\check m_{i+1} = 2q_f+1$.

Let $l$ be the smallest number such that $\check m_{i+l}=2(q_f+1)$,
where we assume that $\check x_{i+j}$ is infinitely close to $\check x_{i+j-1}$ for $j=1,\cdots, l$.
Such an $l$ exists by Lemma \ref{ERgeq2qf+2}.
And $l\geq 2$ if $\check m_{i+1} = 2q_f+1$.
Note that the exceptional curve $\check {\mathcal E}_{i+j}$ is contained in $\check R_{i+j}$
for $1\leq j \leq l$, since $\check m_{i+j-1}$ is odd.
Because $\check m_{i+l}=\check m_{i+l-1}+1$,
$\check x_{i+l}$ must be the unique singular point of $\check R_{i+l}$ on the exceptional curve
$\check {\mathcal E}_{i+l}$.

Let $\check E_{i+l-1}\subseteq \check P_{i+l}$
be the strict transform of $\check {\mathcal E}_{i+l-1} \subseteq \check P_{i+l-1}$,
$D=\check R_{i+l}-(\check E_{i+l-1}+\check {\mathcal E}_{i+l})$,
and $D'$ the image of $D$ in $\check P_{i+l-1}$.
Then $\check x_{i+l}\in \check E_{i+l-1}$, since
$\check x_{i+l}$ is the unique singular point of $\check R_{i+l}$ on the exceptional curve
$\check {\mathcal E}_{i+l}$ and $\check E_{i+l-1}\cap \check {\mathcal E}_{i+l}$
is a singularity of  $\check R_{i+l}$.
\begin{center}
\setlength{\unitlength}{1.3mm}
\begin{picture}(100,15)
\put(12,3){\line(0,1){14}}
\put(4,10){\line(1,0){19}}
\qbezier(12,10)(16,11)(20,15)
\qbezier(12,10)(16,9)(20,5)
\put(12,10){\circle*{0.8}}
\put(7,11.5){$\check x_{i+l}$}
\put(9,0){$\check {\mathcal E}_{i+l}$}
\put(21,7){$\check {E}_{i+l-1}$}
\put(16,14){$D$}
\put(27,10){\vector(1,0){9}}
\put(27,11.5){$\check \psi_{i+l-1}$}

\put(40,10){\line(1,0){17}}
\qbezier(47,10)(51,11)(55,15)
\qbezier(47,10)(51,9)(55,5)
\put(47,10){\circle*{0.8}}
\put(40,11.5){$\check x_{i+l-1}$}
\put(56,7){$\check {\mathcal E}_{i+l-1}$}
\put(51,14){$D'$}
\put(62,10){\vector(1,0){9}}
\put(62,11.5){$\check \psi_{i+l-2}$}

\qbezier(82,10)(86,11)(90,15)
\qbezier(82,10)(86,9)(90,5)
\put(82,10){\circle*{0.8}}
\put(75,11.5){$\check x_{i+l-2}$}
\end{picture}
\end{center}
Since $\check m_{i+l}=2(q_f+1)$,
$D$ has multiplicity $\check m_{i+l}-2=2q_f$ at $\check x_{i+l}$.
Hence the local intersection
$$I_{\check x_{i+l}}(\check E_{i+l-1},\,D) \geq  2q_f,\qquad
I_{\check x_{i+l}}(\check{\mathcal E}_{i+l},\,D) \geq  2q_f.$$
Note that $D$ is nothing but the strict transform of $D'$,
and $(\check\psi_{i+l-1})^*(\check {\mathcal E}_{i+l-1})=\check E_{i+l-1}+\check{\mathcal E}_{i+l}$. Thus
$$
I_{\check x_{i+l-1}}\big(\check {\mathcal E}_{i+l-1},\,D'\big)=
I_{\check x_{i+l}}\big(\check\psi_{i+l-1}^*(\check {\mathcal E}_{i+l-1}),\,D\big)
=I_{\check x_{i+l}}(\check E_{i+l-1}+\check{\mathcal E}_{i+l},\,D) \geq 4q_f.
$$
Note that $\check \psi_{i+l-2}(D')=\check R_{i+l-2}$,
and the multiplicity $\check m_{i+l-2}$ of $\check R_{i+l-2}$
at $\check x_{i+l-2}$ equals the intersection number $\check {\mathcal E}_{i+l-1}\cdot D'$.
Hence
$$2q_f+1=\check m_{i+l-2}=\check {\mathcal E}_{i+l-1}\cdot D'
\geq I_{\check x_{i+l-1}}(\check {\mathcal E}_{i+l-1},\,D')\geq 4q_f,$$
which is a contradiction, since $q_f\geq 1$.
So we finish the proof.
\end{proof}

By the above lemma, it is easy to get the following:
\begin{corollary}\label{contributeofs_i>2q_f}
The contraction $\check\psi$ is composed of several blowing-ups of singularities of $R$ of type $(2k+1 \to 2k+1)$ with $k\geq q_f$,
or of singularities with multiplicity at least $2(q_f+1)$, i.e., it contributes only to $s_i$ with $i>2q_f$.
\end{corollary}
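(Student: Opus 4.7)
The plan is to deduce the corollary directly by combining Lemma \ref{ERgeq2qf+2}, Lemma \ref{m_igeq2q_f+1}, and the definition of the singularity indices $s_i$, without any new geometric input. No dimension-counting or global argument is needed; only a careful matching of the dichotomy in Lemma \ref{m_igeq2q_f+1} against the case distinction in Definition \ref{definitionofs_i}.

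First, I would inspect each blowing-up $\check\psi_i$ in the decomposition $\check\psi = \check\psi_1\circ\cdots\circ\check\psi_u$. For the final step $i = u$, the exceptional curve $\check{\mathcal E}_u \subseteq \ol P$ is a $(-1)$-curve contracted by $\check\psi$, so Lemma \ref{ERgeq2qf+2} forces the multiplicity $\check m_{u-1}$ to satisfy $\check m_{u-1} \geq 2(q_f+1)$. For every intermediate index $1 \leq i \leq u-1$, Lemma \ref{m_igeq2q_f+1} produces the alternative:
\begin{itemize}
\item[(a)] $\check m_i \geq 2(q_f+1)$, or
\item[(b)] $\check m_i = 2q_f+1$, and the subsequent center $\check x_{i+1}$ is the unique singular point of $\check R_{i+1}$ on $\check{\mathcal E}_{i+1}$, of multiplicity $2q_f+2$.
\end{itemize}
In case (b) the pair $(\check x_i, \check x_{i+1})$ is, by the convention preceding Definition \ref{definitionofs_i}, precisely a singularity of $R$ of type $(2q_f+1 \to 2q_f+1)$.

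Next I would translate this dichotomy into contributions to the singularity indices. In case (b), the pair contributes to $s_{2q_f+1}$, and since $q_f \geq 1$ we have $2q_f+1 > 2q_f$, as required. In case (a), a singularity of multiplicity $m \geq 2(q_f+1)$ contributes, according to the parity of $m$ and the rules of Definition \ref{definitionofs_i}, either to $s_m$, $s_{m-1}$, $s_{m+1}$, or to $s_{g+2}$; since $m - 1 \geq 2q_f + 1 > 2q_f$, every candidate index lies strictly above $2q_f$.

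The only remaining bookkeeping is to confirm that the exclusion clause in the definition of even $s_{2k}$ (which removes second components of $(2k-1 \to 2k-1)$-type singularities and first components of $(2k+1 \to 2k+1)$-type singularities) does not shift a case-(a) singularity into some $s_i$ with $i \leq 2q_f$; but any excluded multiplicity-$m$ point is then counted inside an odd-type singularity $(2k'+1 \to 2k'+1)$ with $2k'+1 \in \{m-1, m+1\}$, hence $k' \geq q_f$, and it contributes to $s_{2k'+1}$ with $2k'+1 > 2q_f$. I expect no genuine obstacle in this argument — it is a routine matching of conditions — so the corollary follows.
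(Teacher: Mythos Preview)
Your argument is correct and matches the paper's approach: the paper simply declares the corollary an immediate consequence of Lemma~\ref{m_igeq2q_f+1}, and you have spelled out exactly that deduction. One small bookkeeping point: your case split (the ``final step $i=u$'' together with the lemma's range $1\le i\le u-1$) covers $\check m_1,\ldots,\check m_{u-1}$, with $\check m_{u-1}$ handled twice, but omits $\check m_0$, the multiplicity at the first center $\check x_0$; this gap is inherited from the index range stated in Lemma~\ref{m_igeq2q_f+1}, and since the proof of that lemma applies verbatim to $i=0$, the fix is immediate.
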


Now we are in the position to prove Proposition \ref{indeplemma}.
\begin{proof}[Proof of Proposition {\rm \ref{indeplemma}}]~
Let $\check s_{2k+1}$ ($k\geq 1$) be the number of the singularities of $R$
of type $(2k+1 \to 2k+1)$ introduced by $\check\psi$,
and $\check s_{2k}$ ($k\geq 2$) be the number of the singularities of $R$ with multiplicity $2k$ or $2k+1$
introduced by $\check\psi$
neither belonging to the second component of $(2k-1 \to 2k-1)$ type singularities nor to
the first component of $(2k+1 \to 2k+1)$  type singularities.
Then $\check s_i \geq 0$, and by Corollary \ref{contributeofs_i>2q_f} one has
\begin{equation}\label{checksbound}
\check s_{2k+1}=0, \quad \forall~ 1\leq k\leq q_f-1;\qquad\qquad
\check s_{2k}=0,  \quad \forall~ 1\leq k\leq q_f.
\end{equation}
Let $\bar s_i=s_i-\check s_{i}.$
Then for $i>2$, $\bar s_i$ is nothing but the number of the singularities of $R$ introduced by $\bar\psi$ with the corresponding multiplicity or types.
Hence $\bar s_i\geq 0$ for $i>2$;
and one has by \eqref{checksbound} that
\begin{equation}\label{defofbars}
s_{2k+1}=\bar s_{2k+1}, \quad \forall~ 1\leq k\leq q_f-1;\qquad\quad
s_{2k}=\bar s_{2k},  \quad \forall~ 1\leq k\leq q_f.
\end{equation}

According to Lemma \ref{wtR=Gamma_i} and the decomposition of $\psi$ in Figure\,\ref{bimapfactordiagram},
we see that $\ol R$ is contained in the fibers of $\bar h'$,
hence it is semi-negative definite.
By the definition of the $\check s_i$'s and Lemma \ref{numberofcontraction},
there are
$\sum\limits_{k=q_f}^{[g/2]}  \check s_{2k+1}+2\check s_{g+2}$
 isolated $(-2)$-curves contained in $\ol R$.
Thus
$$\ol R^2 \leq -2 \left(\sum_{k=q_f}^{[\frac{g}{2}]}  \check s_{2k+1}+2\check s_{g+2}\right).$$
On the other hand, by definition,
$$\ol R^2=R^2-\sum_{k=q_f}^{[\frac{g}{2}]}4(2k^2+2k+1)\check s_{2k+1}
-\sum_{k=q_f+1}^{[\frac{g+1}{2}]}4k^2 \check s_{2k}
-2(g^2+4g+5)\check s_{g+2}.$$
As $R^2=4L^2=4(g+1)n$ by \eqref{n=delta^2/g+1}, we get
\begin{equation}\label{(g+1)n}
(g+1)n\leq \sum_{k=q_f}^{[\frac{g}{2}]}\left(2k^2+2k+\frac12\right)\check s_{2k+1}
+\sum_{k=q_f+1}^{[\frac{g+1}{2}]}k^2 \check s_{2k}+\frac{(g+1)(g+3)}{2}\check s_{g+2}.
\end{equation}
Hence 
\begin{equation*}
\begin{aligned}
&s_2+\sum_{k=1}^{q_f-1}4k(2k+1) s_{2k+1}+\sum_{k=2}^{q_f}2k(2k-1) s_{2k}\\
\leq\,& \bar s_2+\sum_{k=1}^{[\frac{g}{2}]}4k(2k+1) \bar s_{2k+1}
+\sum_{k=2}^{[\frac{g+1}{2}]}2k(2k-1) \bar s_{2k}+2(g^2+3g+1)\bar s_{g+2}\\
\leq\,& \sum_{k=q_f}^{[\frac{g}{2}]}\frac{(2k+1)(2g+1-2k)}{g+1}\check s_{2k+1}
+\sum_{k=q_f+1}^{[\frac{g+1}{2}]}\frac{2k(g+1-k)}{g+1}\check s_{2k}+(g+1)\check s_{g+2}\\
\leq\,& \sum_{k=q_f}^{[\frac{g}{2}]}\frac{(2k+1)(2g+1-2k)}{g+1} s_{2k+1}
+\sum_{k=q_f+1}^{[\frac{g+1}{2}]}\frac{2k(g+1-k)}{g+1} s_{2k}+(g+1) s_{g+2}.
\end{aligned}
\end{equation*}
The first and last inequalities above follow immediately from the non-negativity of $\bar s_i$'s for $i>2$ and \eqref{defofbars};
and the second one follows from \eqref{n=s_i} and \eqref{(g+1)n}.
The proof is complete.
\end{proof}

\section{Proof of Theorem \ref{maintheorem} and its corollaries}\label{proofofmain}
This section aims to prove our main result
Theorem \ref{maintheorem} and its corollaries.
It is based on \eqref{s2leqDelta} given in Proposition \ref{indeplemma}
and the formulas given in Theorem \ref{xiaotheorem}.

\vspace{0.5mm}
\begin{proof}[Proof of Theorem {\rm\ref{maintheorem}}]~
According to \eqref{boundofq_f}, it is known that $q_f\leq \frac{g+1}{2}$\vspace{0.1cm}.
Recall the definition of $\lambda_{g,q_f}$ in \eqref{defintionoflambda_gq_f}.
If $q_f=0$, then $\lambda_{g,0}=\frac{4(g-1)}{g}$\vspace{0.1cm},
and so \eqref{mainequation} holds by \eqref{slopeinequality}.
Thus we assume $q_f\geq 1$ in the following.

First we prove that
\begin{equation}\label{lowboundequa}
\begin{aligned}
K_f^2~\geq~&\lambda_{g,q_f} \cdot\chi_f
+  \alpha s_{g+2}\\
&+\sum_{k=1}^{q_f-1} \alpha_k s_{2k+1}
+\sum_{k=2}^{q_f}\beta_k s_{2k}
+\sum_{k=q_f}^{[\frac{g}{2}]} \gamma_k s_{2k+1}
+\sum_{k=q_f+1}^{[\frac{g+1}{2}]} \delta_k s_{2k},
\end{aligned}
\end{equation}
where
\begin{equation*}
\left\{
\begin{aligned}
 \alpha=&~\frac{(g-1)(8-\lambda_{g,q_f})}{4},&&\\[0.1cm]
\alpha_k=&~ k^2\lambda_{g,q_f}-(2k-1)^2,&& \forall\, 1\leq k\leq q_f-1,\\[0.1cm]
\beta_k=&~\frac{(k-1)\big(k\lambda_{g,q_f}-4(k-1)\big)}{2},&&\forall\, 2\leq k\leq q_f,\\[0.1cm]
\gamma_k=&~ \frac{8(4k(g-k)-1)-(4k(g-k)+g)\cdot\lambda_{g,q_f}}{4(g+1)},
&&\forall\, q_f\leq k\leq [\frac{g}{2}],\\[0.1cm]
\delta_k=&~ \frac{k(g+1-k)(8-\lambda_{g,q_f})-4(g+1)}{2(g+1)},&&\forall\, q_f+1\leq k\leq [\frac{g+1}{2}].
\end{aligned}\right.
\end{equation*}
Indeed, it is clear that $\lambda_{g,q_f} \geq \frac{4(g-1)}{g}$.
Hence by Theorem \ref{xiaotheorem} and \eqref{s2leqDelta}, one obtains
\begin{eqnarray*}
&&(2g+1)\left(K_f^2-\lambda_{g,q_f}\cdot\chi_f\right)\\
&=& \left(g-1-\frac{g}{4}\cdot\lambda_{g,q_f}\right)s_2
+\left((3g^2-2g-1)-(g^2-2g-1)\cdot\frac{\lambda_{g,q_f}}{4}\right)s_{g+2}\\
&&+\sum_{k=1}^{[\frac{g}{2}]}\big(a_k-k(g-k)\cdot\lambda_{g,q_f}\big)s_{2k+1}
+\sum_{k=2}^{[\frac{g+1}{2}]}\Big(b_k-\frac12k(g+1-k)\cdot\lambda_{g,q_f}\Big)s_{2k}\\
&\geq & \left(g-1-\frac{g}{4}\cdot\lambda_{g,q_f}\right)\cdot\Lambda_h
+\left((3g^2-2g-1)-(g^2-2g-1)\cdot\frac{\lambda_{g,q_f}}{4}\right)s_{g+2}\\
&&+\sum_{k=1}^{[\frac{g}{2}]}\big(a_k-k(g-k)\cdot\lambda_{g,q_f}\big)s_{2k+1}
+\sum_{k=2}^{[\frac{g+1}{2}]}\Big(b_k-\frac12k(g+1-k)\cdot\lambda_{g,q_f}\Big)s_{2k}\\
&=&(2g+1)\left(\alpha s_{g+2}
+\sum_{k=1}^{q_f-1} \alpha_k s_{2k+1}
+\sum_{k=2}^{q_f}\beta_k s_{2k}
+\sum_{k=q_f}^{[\frac{g}{2}]} \gamma_k s_{2k+1}
+\sum_{k=q_f+1}^{[\frac{g+1}{2}]} \delta_k s_{2k}\right),
\end{eqnarray*}
where
$$\begin{aligned}
\Lambda_h\,=\,& \sum_{k=q_f}^{[\frac{g}{2}]}\frac{(2k+1)(2g+1-2k)}{g+1} s_{2k+1}
+\sum_{k=q_f+1}^{[\frac{g+1}{2}]}\frac{2k(g+1-k)}{g+1} s_{2k}+(g+1) s_{g+2}\\
&-\sum_{k=1}^{q_f-1}4k(2k+1) s_{2k+1}-\sum_{k=2}^{q_f}2k(2k-1) s_{2k}.
\end{aligned}$$
Therefore, \eqref{lowboundequa} follows.

To prove \eqref{mainequation},
it suffices to prove that
those coefficients $\alpha$, $\alpha_k$, $\beta_k$, $\delta_k$ and $\gamma_k$ in \eqref{lowboundequa}
are all non-negative by noting that $s_i \geq 0$ for any $i\geq 3$.
It is clear that
\begin{equation}\label{slopelinshi1}
\alpha \geq 0;\qquad \alpha_k>0,~\forall\, 1\leq k\leq q_f-1;
\text{\qquad and~} \beta_k>0,~\forall\, 2\leq k\leq q_f.
\end{equation}

If $q_f \leq \frac{g-1}{2}$, then
\begin{eqnarray*}
\gamma_k&=&\frac{4k(g-k)+g}{(q_f+1)(g-q_f)}-2\\
&\geq& \frac{4q_f(g-q_f)+g}{(q_f+1)(g-q_f)}-2=\frac{2(q_f-1)(g-q_f)+g}{(q_f+1)(g-q_f)}
> 0, \qquad\quad \forall~q_f\leq k \leq [\frac{g}{2}].\\[0.2cm]
\delta_k&=& 2\left(\frac{k(g+1-k)}{(q_f+1)(g-q_f)}-1\right)\\
&\geq& 2\left(\frac{(q_f+1)\big(g+1-(q_f+1)\big)}{(q_f+1)(g-q_f)}-1\right)
=0, \qquad\qquad \forall~q_f+1\leq k \leq [\frac{g+1}{2}].
\end{eqnarray*}
Hence \eqref{mainequation} follows from \eqref{lowboundequa} when $q_f \leq \frac{g-1}{2}$.

If $g$ is even and $q_f=\frac{g}{2}$,
then the last summation in \eqref{lowboundequa} disappears automatically.
So it suffices to show $\gamma_{g/2}\geq 0$.
By definition,
$$\gamma_{g/2}=\frac{1}{4(g+1)}\cdot
\left(8\cdot\Big(4\cdot\frac{g}{2}\cdot\big(g-\frac{g}{2}\big)-1\Big)-
\Big(4\cdot\frac{g}{2}\cdot\big(g-\frac{g}{2}\big)+g\Big)\cdot\frac{8(g-1)}{g}\right)=0.$$
Hence, \eqref{mainequation} also holds in this case.

Finally, if $g$ is odd and $q_f=\frac{g+1}{2}$,
then the last two summations in \eqref{lowboundequa} disappear,
hence we have already showed that $K_f^2 \geq \lambda_{g,q_f}\cdot\chi_f$ by \eqref{slopelinshi1}.
So \eqref{mainequation} holds in this case too.
This completes the proof.
\end{proof}

\vspace{0.1mm}
\begin{proof}[Proof of Corollary {\rm\ref{corollarypfofconjectue}}]~
If $g$ is even and $q_f=\frac{g}{2}$,
or $g$ is odd and $q_f=\frac{g+1}{2}$, then by definition, $$\lambda_{g,q_f}=\frac{4(g-1)}{g-q_f}.$$
If $q_f\leq \frac{g-1}{2}$, i.e., $g-2q_f-1\geq0$, then
$$
\lambda_{g,q_f}-\frac{4(g-1)}{g-q_f}=
\frac{4q_f\big(g-2q_f-1\big)}{(q_f+1)(g-q_f)}\geq 0;
$$
and `$=$' holds only if $q_f=0$ or $\frac{g-1}{2}$.
Therefore, our corollary is a consequence of \eqref{mainequation}.
\end{proof}

\begin{remark}
When $q_f=0$, Corollary \ref{corollarypfofconjectue} is just \eqref{slopeinequality}.
When $q_f=\frac{g+1}{2}$, $\frac{g}{2}$ or $\frac{g-1}{2}$,
Corollary \ref{corollarypfofconjectue} was already obtained by Xiao in \cite{xiao92-0}.
\end{remark}

\vspace{0.1mm}
\begin{proof}[Proof of Corollary {\rm\ref{corollarypfofconjectuexiao}}]~
Let $f:\,S \to B$ be a fibration of genus $g\geq 2$,
which is not locally trivial and $\lambda_f<4$.
We need to prove that $f_*\omega_{S/B}$ has no locally free quotient of degree zero.
By \cite[Theorem\,1]{bz00},
we may assume that $f$ is a hyperelliptic fibration.
Recall that by \eqref{fujitadecom} we have the following decomposition:
\begin{equation}\label{fujitadecom'}
f_*\omega_{S/B}=\mathcal A \oplus \mathcal F_1 \oplus \cdots
\oplus \mathcal F_k \oplus \mathcal O_B^{\oplus q_f},
\end{equation}
with $\mathcal A$ ample, $\mathcal F_i$ irreducible unitary
and $\dim H^1(B, \Omega^1_B(\mathcal F_i))=0$ for $1\leq i \leq k$.
Since $\lambda_f<4$, we have that $q_f=0$ by Corollary \ref{corollarypfofconjectue}.
Clearly $\mathcal A$ has no non-trivial locally free quotient of degree zero.
Hence it suffices to prove that $\mathcal F_i=0$ in the above decomposition \eqref{fujitadecom'}.

Assume that $\mathcal F_i \neq 0$ for some $i$.
By construction, $\mathcal F_i$ corresponds to a unitary representation of the fundamental group
$$\rho_i:~\pi_1(B) \lra U(r_i),\qquad\qquad\text{where~} r_i=\rank \mathcal F_i.$$
If the image of $\rho_i$ is finite, then, after a suitable finite \'etale base change,
$\mathcal F_i$ becomes trivial, which implies that $q_f>0$ after such a finite \'etale base change.
However, it is a contradiction by Corollary \ref{corollarypfofconjectue}, since
the slope does not change under any finite \'etale base change.
Hence we may assume that $\rho_i$ has infinite image.

By the stable reduction theorem (cf. \cite{artinwinters,delignemumford}),
there exists a base change $\phi:\wt B \to B$ of finite degree, possibly ramified,
such that the pull-back fibration $\tilde f: \wt S \to \wt B$ is semi-stable.
According to \cite[Theorem\,4.7]{luzuo14} (see also Theorem \ref{thmFtrivial} in the appendix),
applying possibly a further base change, we may assume that the Fujita decomposition of $\tilde f_*\omega_{\wt S/ \wt B}$ is as follows,
\begin{equation}\label{pfofconj1}
\tilde f_*\omega_{\wt S/ \wt B} = \wt {\mathcal A} \oplus \mathcal O_{\wt B}^{\oplus q_{\tilde f}},
\quad\text{with $\wt {\mathcal A}$ ample}.
\end{equation}
Here the pull-back fibration $\tilde f:\, \wt S \to \wt B$ is constructed as follows.
Let $S_1$ be the resolution of singularities of $S\times_{B}\wt B$.
Then $\tilde f:\, \wt S \to \wt B$ is just the relatively minimal model of $S_1$.
\begin{center}\mbox{}
\xymatrix{
\wt S \ar@{<-}[rr]^-{\theta} \ar[d]_-{\tilde f} && S_1 \ar[rr]^-{\Phi_1} \ar[d]_-{f_1}
&& S\times_{B}\wt B \ar[rr]^-{\Phi_0} \ar[d] && S \ar[d]^-{f}\\
\wt B \ar@{=}[rr]  && \wt B \ar@{=}[rr] && \wt B \ar[rr]^-{\phi} && B}
\end{center}

It is known that (cf. \cite[p.231]{tan94}) there is an inclusion
$\tilde f_*\omega_{\wt S/ \wt B} \subseteq \phi^* f_*\omega_{S/B}$.
As $$\rank \tilde f_*\omega_{\wt S/ \wt B} =\rank \phi^* f_*\omega_{S/B}=g,$$
the quotient $Q:=(\phi^* f_*\omega_{S/B})\big/(\tilde f_*\omega_{\wt S/ \wt B})$ is a torsion sheaf.
By projection, we get a morphism:
$${\rm pr}_i:~\tilde f_*\omega_{\wt S/ \wt B} \lra \phi^*\mathcal F_i.$$
Hence the quotient $Q_i:=(\phi^*\mathcal F_i)\big/{\rm pr}_i(\tilde f_*\omega_{\wt S/ \wt B})$ is also a torsion sheaf.
Note that $\deg \left({\rm pr}_i(\tilde f_*\omega_{\wt S/ \wt B})\right) \geq 0$,
since it is a quotient of $\tilde f_*\omega_{\wt S/ \wt B}$.
Note also that $\deg Q_i\geq 0$, and
$$0=\deg \phi^*\mathcal F_i =\deg Q_i+\deg \left({\rm pr}_i(\tilde f_*\omega_{\wt S/ \wt B})\right).$$
Hence we obtain that $Q_i$ is zero, and ${\rm pr}_i$ is surjective.

By construction, $\phi^*\mathcal F_i$ comes from the following unitary representation
$$\xymatrix{
  \pi_1(\wt B) \ar[rr]^-{\tilde\rho_i} \ar[dr]_-{\phi_*}
                &  &    U(r_i)  \\
                & \pi_1(B)  \ar[ru]_-{\rho_i}                 }$$
It follows that $\tilde\rho_i$ has infinite image,
since $\rho_i$ has infinite image and $\phi_*\left(\pi_1(\wt B)\right)$ has finite index in $\pi_1(B)$.

Since $\wt {\mathcal A}$ in \eqref{pfofconj1} is ample, it maps to zero by ${\rm pr}_i$.
Therefore we have a surjective morphism:
$${\rm pr}_i:~\mathcal O_{\wt B}^{\oplus q_{\tilde f}} \lra \phi^*\mathcal F_i.$$
Note that the trivial bundle $\mathcal O_{\wt B}^{\oplus q_{\tilde f}}$ corresponds to the trivial representation (hence also unitary representation) of $\pi_1(\wt B)$.
Hence by \cite{ns65}, $\phi^*\mathcal F_i$ is a direct summand of $\mathcal O_{\wt B}^{\oplus q_{\tilde f}}$,
which implies that the representation $\tilde\rho_i$ corresponding to $\phi^*\mathcal F_i$ is also trivial.
%
This gives a contradiction,
since the representation $\tilde\rho_i$ has infinite image by construction.
This completes the proof.
\end{proof}

\section{Examples}\label{examples}
In this section, we construct examples to show that the bound \eqref{mainequation} is sharp.
\begin{example}\label{ex(g+1)=m(q_f+1)}
Hyperelliptic fibration $f$ of genus $g$ with relative irregularity $q_f$
satisfying $g+1=m(q_f+1)$ for some $m\geq 2$, and
$$\lambda_f=\frac{K_f^2}{\chi_f}=\lambda_{g,q_f},
\qquad \text{where $\lambda_{g,q_f}$ is defined in \eqref{defintionoflambda_gq_f}}.$$

Let $P=\bbp_{\bbp^1}\big(\mathcal O_{\bbp^1}\oplus \mathcal O_{\bbp^1}(e)\big)$
be the rational ruled surface
with invariant $e\geq 1$.
Let $$h:\,P\lra B \triangleq\bbp^1$$ be the ruling, $\Gamma \subseteq P$ a general fiber of $ h$,
and $C_0 \subseteq P$ the unique section with self-intersection $C_0^2=-e$.
According to \cite[\S\,V-2]{hartshorne}, the divisor $mC_0+b\Gamma$ is very ample
if and only if $b> me$.
Let $mC_0+b_0\Gamma$ be a very ample divisor.
Then by Bertini's theorem (cf. \cite[\S\,II-8]{hartshorne}),
a general member $D\in |mC_0+b_0\Gamma|$ is smooth and
any two general members $D_1,\,D_2\in |mC_0+b_0\Gamma|$ intersect with each other transversely.
Let $D,\,D'$ be two general members in $|mC_0+b_0\Gamma|$,
and $\Lambda$ the pencil generated by $D$ and $D'$.
Then $\Lambda$ defines a rational map $\varphi_{\Lambda}:\,P \dashrightarrow \mathbb P^1$.
By blowing up the base points of $\Lambda$, we get a fibration $\tilde h':\,\wt P \to \bbp^1$,
$$\xymatrix{
  \wt P \ar[rr]^-{\psi} \ar[dr]_-{\tilde h'}
                &  &    P  \ar@{-->}[dl]^-{\varphi_{\Lambda}}   \\
                & \bbp^1                 }$$
where $\psi:\, \wt P \to P$ is composed of blowing-ups centered at the base points of $\Lambda$.
Let $\wt \Gamma'$ be a general fiber of $\tilde h'$, $K_{\wt P}$ the canonical divisor of $\wt P$.
Then
\begin{equation}\label{examplinshi1}
K_{\wt P}^2=8-x,\qquad K_{\wt P}\cdot \wt\Gamma' =\frac{(m-1)x}{m}-2m,
\qquad \left(\wt \Gamma'\right)^2=0,
\end{equation}
where
$x=(mC_0+b_0\Gamma)^2$
is the number of blowing-ups contained in $\psi$.
Let $\Delta \subseteq \bbp^1$ be a set of $2(q_f+1)$ general points,
and
$\wt R=(\tilde h')^*(\Delta)$
the corresponding fibers of $\tilde h'$.
Let $\pi':\,B'\to \bbp^1$ be the double cover ramified over $\Delta$,
and $\wt S$ be the normalization of the fiber-product $\wt P\times_{\bbp^1} B'$:
\begin{center}\mbox{}
\xymatrix{
\wt S \ar@{->}[rr]^-{\tilde\pi} \ar@{->}[d]_-{\tilde f'} && \wt P \ar@{->}[d]^-{\tilde h'}\\
B' \ar@{->}[rr]^-{\pi'} && \bbp^1
}
\end{center}
Note that if $\Delta$ is general on $\bbp^1$, then $\wt R$ is both reduced and smooth.
Hence $\wt S$ is also smooth.
Note that $\wt R \equiv (2q_f+2)\wt \Gamma'$,
where $\wt \Gamma'$ is a general fiber of $\tilde h'$.
So by the standard formulas for double covers (cf. \cite[\S\,V.22]{bhpv}) and \eqref{examplinshi1}, one gets
\begin{eqnarray*}
K_{\wt S}^2&=& 2\left(K_{\wt P}+\frac12\wt R\right)^2
=\left(\frac{4(m-1)(q_f+1)}{m}-2\right)x-\big(8m(q_f+1)-16\big),\\
\chi(\mathcal O_{\wt S})&=&2\chi(\mathcal O_{\wt P})
+\frac12\left(K_{\wt P}+\frac12\wt R\right)\cdot\frac{\wt R}{2}
=\frac{(m-1)(q_f+1)}{2m}x-\big(m(q_f+1)-2\big).
\end{eqnarray*}
The ruling $ h:\, P\to B \cong \bbp^1$ induces a fibration $\tilde h:\, \wt P \to B$ and hence
a fibration $f:\,\wt S \to B$.
$$\xymatrix{
  \wt S \ar[rr]^-{\tilde\pi} \ar[drr]_-{f}
                &  &    \wt P  \ar[d]^-{\tilde h}\ar[rr]^-{\psi} &&  P  \ar[dll]^-{ h} \\
                && B\cong \bbp^1&&                 }$$
It is easy to show that the induced map $\tilde h|_{\wt R}:\,\wt R \to B$
is of degree $m\cdot 2(q_f+1)=2(g+1)$,
hence $f$ is a hyperelliptic fibration of genus $g$.
By construction, the relative irregularity of $f$ is just $q_f=g(B')$, and
\begin{eqnarray*}
K_f^2  &=& K_{\wt S}^2-8(g-1)(g(B)-1)=\left(\frac{4(m-1)(q_f+1)}{m}-2\right)x,\\
\chi_f &=&\chi(\mathcal O_{\wt S})-(g-1)(g(B)-1)=\frac{(m-1)(q_f+1)}{2m}x.
\end{eqnarray*}
Actually, $f$ is relatively minimal.
To see this, let $R$ be the image of $\wt R$ in $P$.
Then the singular points of $R$ are all of multiplicity $2(q_f+1)$.
Hence $s_{2k+1}=0$ for all $k\geq 1$, which implies that $\wt S$
is relatively minimal by Lemma \ref{numberofcontraction}.
Hence $f$ is a relatively minimal locally non-trivial hyperelliptic fibration of genus $g$
with relative irregularity $q_f=\frac{g+1}{m}-1\leq \frac{g-1}{2}$, and
$$\lambda_f=\frac{K_f^2}{\chi_f}=8-\frac{4m}{(m-1)(q_f+1)}
=8-\frac{4(g+1)}{(g-q_f)(q_f+1)}=\lambda_{g,q_f}.$$
\end{example}

\begin{remarks}\label{remarkg=3q_f=1}
(i). Let $g=3$ and $m=2$ in the above example. Then we get a hyperelliptic fibration $f$ of genus $3$ with
relative irregularity $q_f=1$ and slope $\lambda_f=4$.

(ii). According to Bertini's theorem (cf. \cite[\S\,II-8]{hartshorne}),
for a general member $D\in |mC_0+b_0\Gamma|$,
the projection of $ h|_D:\,D\to B$ has at most simply ramified points.
Hence if $\Delta$ is general enough, then the fibration $f$
obtained in the above example is semi-stable.
\end{remarks}

\begin{example}\label{exq_f=g/2or(g+1)/2}
Hyperelliptic fibration $f$ of genus $g$ with
$$\left\{
\begin{aligned}
&q_f=\frac{g}{2},&\quad&\lambda_f=\frac{K_f^2}{\chi_f}=\frac{8(g-1)}{g}, &\qquad& \text{if $g$ is even};\\
&q_f=\frac{g+1}{2},&&\lambda_f=\frac{K_f^2}{\chi_f}=8, && \text{if $g$ is odd}.
\end{aligned}
\right.$$

Let $F$ be the hyperelliptic curve of genus $g$ defined by $u^2=v^{2g+2}-1$,
and $ \tau_1$ be an involution of $F$ defined by $ \tau_1(u,v)=(-u,-v)$.
Then $ \tau_1$ has exactly two fixed points if $g$ is even,
and $ \tau_1$ has no fixed point if $g$ is odd.
Let $\phi:\,\wt B \to B$ be a double cover between two projective curves of genus
$\tilde b=g(\wt B)$ and $b=g(B)$ respectively,
$\Sigma\subseteq B$ the branch divisor,
$\wt\Sigma=\phi^{-1}(\Sigma)$,
and $ \tau_2$ the induced involution of $\wt B$ such that $B=\wt B/\langle \tau_2\rangle$.
Let $\tau=( \tau_1, \tau_2)$ be an involution of $X=F\times \wt B$ defined by
$\tau(p,q)=\big( \tau_1(p), \tau_2(q)\big)$,
where $p\in F$ and $q\in \wt B$.
Then $X/\langle \tau\rangle$ has a natural fibration of genus $g$ over $B$.
Let $f:\,S\to B$ be the relatively minimal smooth model of $X/\langle \tau\rangle$ as follows.
\begin{center}\mbox{}
\xymatrix{
X \ar@{->}[rr]^-{\Phi} \ar@{->}[d]_-{\tilde f} && X/\langle \tau\rangle \ar@{->}[d]^-{f'}
\ar@{<-->}[rr]&& S\ar@{->}[dll]^-{f}\\
\wt B \ar@{->}[rr]^-{\phi} && B&&
}
\end{center}

Assume $\Sigma \neq \emptyset$. Then we see that $f$ is a non-trivial hyperelliptic fibration.
If $g$ is even, then $\tau$ has exactly two fixed points over each fiber in $\tilde f^*(\wt\Sigma)$,
and so $X/\langle \tau\rangle$ has only rational singularities of type $A_1$ (cf. \cite[\S\,III-3]{bhpv});
if $g$ is odd, then $\tau$ is fixed-point-free,
and hence $S=X/\langle \tau\rangle$ is already smooth and relatively minimal.
Let $|\Sigma|$ be the number of points in $\Sigma$.
Then one can compute that
\begin{eqnarray*}
K_f^2&=&2(g-1)\cdot|\Sigma|,\\
\chi_f&=&
\left\{
\begin{aligned}
&\frac{\,g\,}{4}\cdot |\Sigma|,   &\quad& \text{if $g$ is even};\\[0.1cm]
&\frac{g-1}{4}\cdot |\Sigma|, &\quad& \text{if $g$ is odd}.
\end{aligned}
\right.\end{eqnarray*}
Therefore, we obtain hyperelliptic fibrations with the required slopes.
To compute the relative irregularity, we consider another projection of $X$,
i.e., $\tilde h:\,X \to F$.
It induces a fibration $h':\,X/\langle \tau\rangle \to B'=F/\langle \tau_1\rangle$,
and hence also a fibration
$h:\,S \to B'$ with
$$g(B')=\frac{g}{2}, ~~\text{if $g$ is even,} \qquad \text{and} \qquad
g(B')=\frac{g+1}{2}, ~~\text{if $g$ is odd.}$$
In particular, $q_f \geq g(B')$.
Combining this with \eqref{boundofq_f},
we see that $q_f=g(B')$ as required.
\end{example}

\begin{remark}\label{remarkg=2q_f=1}
Taking $g=2$ in the above example, we get a hyperelliptic fibration of genus $2$ with
relative irregularity $q_f=1$ and slope $\lambda_f=4$.
\end{remark}

\section*{Appendix}
\setcounter{section}{1}
\setcounter{theorem}{0}
\setcounter{equation}{0}
\renewcommand{\theequation}{\Alph{section}-\arabic{equation}}
\renewcommand{\thetheorem}{\Alph{section}.\arabic{theorem}}
In this appendix, we would like to prove the following theorem on the unitary part of a semi-stable hyperelliptic fibration:

\begin{theorem}\label{thmFtrivial}
Let $f: S\to B$ be a semi-stable hyperelliptic fibration of curves of genus $g\geq 2$ with relative irregularity $q_f$.
Consider the following Fujita decomposition {\rm(}cf. \cite{fujita78b,kollar87,cd14}{\rm):}
$$f_*\omega_{S/B}=\mathcal A^{1,0} \oplus \mathcal F^{1,0},\quad\text{where $\mathcal A^{1,0}$ is ample and $\mathcal F^{1,0}$ is unitary.}$$
Then after passing to a suitable finite \'etale base change, one has
\begin{equation}\label{eqnFtrivial}
\rank \mathcal F^{1,0}=q_{f}.
\end{equation}
\end{theorem}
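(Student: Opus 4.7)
The plan is to reduce the theorem to showing that $\mathcal F^{1,0}$ has finite monodromy. Writing $\mathcal F^{1,0}=\mathcal O_B^{\oplus m}\oplus\mathcal F'$ with $\mathcal F'$ carrying no trivial summand, relative Serre duality $R^1 f_*\mathcal O_S\cong(f_*\omega_{S/B})^\vee$ combined with the vanishings $h^0(B,(\mathcal A^{1,0})^\vee)=0$ (from ampleness of $\mathcal A^{1,0}$ on a curve of positive genus) and $h^0(B,(\mathcal F')^\vee)=0$ (no trivial summand) yields $q_f=h^0(B,R^1 f_*\mathcal O_S)=m$. Hence the task reduces to showing that $\mathcal F'$ has finite monodromy, since a finite étale cover of $B$ corresponding to the kernel then trivializes $\mathcal F'$, and the resulting Fujita decomposition satisfies $\rank\mathcal F^{1,0}=q_f$ by the same identity applied to the pullback fibration.

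To exhibit the trivial summand of rank $q_f$ concretely, assuming $q_f>0$ (the case $q_f=0$ is degenerate and handled the same way), I would invoke Proposition \ref{fibredprop} to get the fibration $\tilde f':\tilde S\to B'$ with $g(B')=q_f$. Since the branch divisor $\tilde R\subseteq\tilde P$ is a multi-section of the ruling $\tilde h$ while being supported on the fibers of $\tilde h'$, the two fibrations $\tilde h,\tilde h'$ of $\tilde P$ are independent, so the image of $(\tilde f,\tilde f'):\tilde S\to B\times B'$ is two-dimensional and equals all of $B\times B'$. Every smooth fiber $\tilde F$ of $\tilde f$ thus surjects onto $B'$, and pullback of $1$-forms produces $H^0(B',\Omega^1)\hookrightarrow H^0(\tilde F,\Omega^1)$, which globalizes to a trivial subsheaf $\mathcal O_B^{\oplus q_f}\hookrightarrow f_*\omega_{S/B}$. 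By the ampleness of $\mathcal A^{1,0}$ this factors through $\mathcal F^{1,0}$, realizing $\mathcal O_B^{\oplus m}$ with $m\geq q_f$; the identity in the first paragraph gives equality.

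The crucial step is to prove that $\mathcal F'$ has finite monodromy. Here I exploit semi-stability: the local monodromies of $R^1 f_*\mathbb Q$ at singular fibers are unipotent, and since a simultaneously unitary-and-unipotent matrix is trivial, the unitary local system underlying $\mathcal F'\oplus\overline{\mathcal F'}$ has trivial local monodromies and extends to all of $B$ as a polarizable sub-$\mathbb Q$-VHS $\mathbb V'\subseteq R^1 f_*\mathbb Q$ of weight one. After a finite étale base change of $B$ placing $\mathbb V'$ in isotypic form, I would show, using the double-cover structure $\tilde\pi:\tilde S\to\tilde P$ and the maximality encoded in Lemma \ref{contractionbyxiao}, that any non-trivial irreducible summand of $\mathbb V'$ corresponds via Simpson's non-abelian Hodge correspondence to a constant sub-abelian-variety of the relative Jacobian of $f$ not absorbed by $\mathrm{Jac}(B')$. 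Such an extra constant factor would yield a new fibered factorization of $\tilde\pi$ beyond the one realized by $B'$, contradicting the minimality of $R^2$ in Lemma \ref{contractionbyxiao}; hence $\mathbb V'$ is constant and $\mathcal F'$ trivializes after the étale base change.

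The main obstacle is precisely this last step: rigorously converting an abstract non-trivial unitary summand of $f_*\omega_{S/B}$ into a geometric fibered factorization of $\tilde\pi$. Simpson's correspondence alone only yields an abstract sub-Hodge-structure, and translating it back into a contraction of $\tilde P$ that is simultaneously compatible with the hyperelliptic involution $\tilde\sigma$ and with the ruling $\tilde h$ requires a delicate Torelli-type analysis of the relative Jacobian, followed by verifying that any such new constant sub-factor forces an admissible contraction violating the minimality in Lemma \ref{contractionbyxiao}. This is the technical heart of the proof and is where the semi-stability and hyperelliptic hypotheses are used decisively.
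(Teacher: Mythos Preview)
Your reduction in the first two paragraphs is sound and matches the paper's setup: the Fujita decomposition already gives $\mathcal F^{1,0}=\mathcal F_u\oplus\mathcal O_B^{\oplus q_f}$ with $h^0(\mathcal F_u)=0$, so the problem is exactly to show that the unitary representation underlying $\mathcal F_u$ has finite image. The paper makes the same reduction (Theorem~\ref{thmFtrivial}$\Rightarrow$ Theorem~\ref{thmFtrivial}$'$).

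The gap is in your third paragraph. Two issues:

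\begin{itemize}
\item \emph{Circularity.} A unitary sub-VHS $\mathbb V'$ with vanishing Higgs field gives an \emph{isotrivial} sub-abelian scheme of the relative Jacobian, not a constant one. ``Isotrivial'' means precisely ``constant after a finite \'etale base change'', which is the statement you are trying to prove. Simpson's correspondence does not upgrade isotrivial to constant, and ``placing $\mathbb V'$ in isotypic form'' by an \'etale cover does not help: an irreducible unitary representation with infinite image is already isotypic.
\item \emph{Misuse of Lemma~\ref{contractionbyxiao}.} That lemma records a numerical minimality of $R^2$ among contractions of $\wt P$ to geometrically ruled surfaces over $B$; it says nothing about uniqueness of fibred factorizations of $\tilde\pi$, and a second isotrivial abelian subvariety would in any case not automatically produce a second morphism $\wt P\to\mathbb P^1$ whose fibres contain $\wt R$. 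So the contradiction you aim for does not follow.
\end{itemize}

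The paper's argument for the key step is entirely different and does not go through any contradiction with a fibred structure. It proceeds by \emph{peeling off rank-one summands} of $\mathcal F_u$ and appealing to Deligne's theorem that a degree-zero line bundle summand of $f_*\omega_{S/B}$ is torsion. Concretely: the inclusion $\mathcal F_u\hookrightarrow f_*\Omega^1_{S/B}(\log\Upsilon)$ lifts to $f_*\Omega^1_S(\log\Upsilon)$ (Lemma~\ref{lemmalifting}, using $h^0(\mathcal F_u)=0$), and after pulling back to $\wt P$ one looks at the anti-invariant piece
\[
\varrho:\ \tilde h^*\mathcal F_u \lra \Omega^1_{\wt P}\big(\log(\Upsilon'+\wt R)\big)\otimes\wt L^{-1}.
\]
After a (possibly ramified) base change making the fixed locus of $\sigma$ consist of $2g+2$ disjoint sections, Bogomolov's lemma forces $\mathrm{Im}(\varrho)$ to be an invertible sheaf $\mathcal M$ with $\mathcal M\cdot D=0$ for every component $D\subseteq\wt R$ (Lemma~\ref{LR=0}). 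Restricting along a section $s:B\to\wt P$ with $s(B)\subseteq\wt R$ then gives a degree-zero line bundle quotient $\mathfrak L$ of $\mathcal F_u$; poly-stability of $\mathcal F_u$ makes $\mathfrak L$ a direct summand; Deligne's theorem makes $\mathfrak L$ torsion; induction on $\rank\mathcal F_u$ finishes. The ramified base change is then undone by the first reduction.

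So the missing idea in your proposal is a mechanism to produce a \emph{line bundle} quotient of $\mathcal F_u$; the paper gets it from the hyperelliptic double cover and Bogomolov's lemma, not from any uniqueness of fibred factorizations.
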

Note that the above theorem has been proven in \cite[Theorem\,4.7]{luzuo14}.
The proof there used a general lemma (cf. \cite[Lemma\,7.1]{luzuo14}) regarding the global invariant cycle with unitary locally constant coefficient, which may be of independent interest.
However, the theorem is purely on fibred surfaces.
Hence we would like to give another proof without using such a lemma.
Instead, we prove a weak result about the lifting property of the unitary part $\mathcal F^{1,0}$ (cf. Lemma \ref{lemmalifting}), which is enough for our purpose.
The rest of the arguments is parallel to that of \cite[Theorem\,4.7]{luzuo14}.

\addtocounter{theorem}{-1}
\renewcommand{\thetheorem}{\Alph{section}.\arabic{theorem}$'$}
{\bf First reduction.}
We first prove that Theorem \ref{thmFtrivial} can be reduced to the following theorem.
\begin{theorem}\label{thmFtrivial'}
Let $f$ be the same as in Theorem {\rm\ref{thmFtrivial}}.
Then after passing to a suitable finite (not necessarily \'etale) base change, one has
\begin{equation}\label{eqnFtrivial'}
\rank \mathcal F^{1,0}=q_{f}.
\end{equation}
\end{theorem}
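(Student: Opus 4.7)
My plan is to prove Theorem \ref{thmFtrivial'} by first establishing a weak lifting property for the unitary part $\mathcal F^{1,0}$ (playing the role of Lemma \ref{lemmalifting}) and then running an argument parallel to the one in \cite[Theorem 4.7]{luzuo14}. Writing the refined Fujita decomposition \eqref{fujitadecom} as $f_*\omega_{S/B}=\mathcal A\oplus \mathcal F \oplus \mathcal O_B^{\oplus q_f}$ with $\mathcal F$ unitary and admitting no trivial summand, one has $\mathcal F^{1,0}=\mathcal F\oplus \mathcal O_B^{\oplus q_f}$, so the inequality $\rank \mathcal F^{1,0}\geq q_f$ is automatic. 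The content of \eqref{eqnFtrivial'} is therefore to exhibit a finite base change after which $\mathcal F=0$, i.e.\ after which the non-trivial unitary summand disappears.

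The first step is to reformulate the Hodge-theoretic data via the hyperelliptic double cover $\tilde\pi:\tilde S \to \tilde P$ with branch pair $(\tilde R,\tilde L)$. Since $\tilde h:\tilde P\to B$ is a $\mathbb P^1$-fibration, one has $f_*\omega_{S/B}\cong \tilde h_*\bigl(\tilde L\otimes \omega_{\tilde P/B}\bigr)$ (the other summand of $\tilde \pi_*\omega_{\tilde S/B}$ pushes to zero), which reduces the Fujita decomposition to a question about the branch data. The second step is to invoke Proposition \ref{fibredprop} together with Lemma \ref{wtR=Gamma_i}: $\tilde R$ is supported on fibers of an auxiliary fibration $\tilde h':\tilde P\to \mathbb P^1$, and there is an associated morphism $\tilde f':\tilde S\to B'$ with $g(B')=q_f$, where $B'\to \mathbb P^1$ is a double cover branched at $2q_f+2$ points. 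This second fibration is the geometric source of the $q_f$ trivial summands, because pullback of $H^0(B',\Omega_{B'}^1)$ along $\tilde f'$ already accounts for them. The lifting lemma would then assert: after a suitable finite (possibly ramified) base change $\phi:\tilde B\to B$, chosen to make the two fibrations $f$ and $f'$ compatible, every flat local section of $\phi^*\mathcal F^{1,0}$ extends to a global holomorphic 1-form on the resulting semi-stable model $\tilde S$. Granting this, each such section contributes to the trivial summand of $\tilde f_*\omega_{\tilde S/\tilde B}$, forcing $\rank \tilde{\mathcal F}^{1,0}\leq q_{\tilde f}$; combined with the reverse inequality, this yields \eqref{eqnFtrivial'}.

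The main obstacle is proving the lifting lemma itself. Unitary local systems generally have infinite monodromy, so sections of $\mathcal F$ cannot, in the generality of arbitrary semi-stable fibrations, be lifted to global holomorphic 1-forms even after a finite base change. The hyperelliptic semi-stable hypothesis is essential: Proposition \ref{fibredprop} forces every flat horizontal section of $\mathcal F^{1,0}$ to be controlled by the auxiliary fibration $\tilde h':\tilde P \to \mathbb P^1$, so the base change $\tilde B\to B$ can be chosen by exploiting the double cover $B'\to \mathbb P^1$ (for instance by taking a component of an appropriate fiber product or Stein factorization built from $\tilde h'$ and $f$) in such a way that the pulled-back unitary local system acquires enough $\pi_1(\tilde B)$-invariant sections to apply Deligne's global invariant cycle theorem and extract global 1-forms on $\tilde S$. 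The technical core of the argument lies in constructing this base change and verifying that every flat section is invariant after pullback; once this is carried out the rest is formal and mirrors \cite[Theorem 4.7]{luzuo14}.
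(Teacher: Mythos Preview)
Your proposal has a genuine gap and diverges substantially from the paper's argument. The ``lifting lemma'' you describe --- that after a well-chosen finite base change every flat local section of $\phi^*\mathcal F^{1,0}$ extends to a global holomorphic $1$-form on the new total space --- is essentially a restatement of the conclusion: it says precisely that the pulled-back unitary representation has become trivial. You offer Proposition~\ref{fibredprop} and Lemma~\ref{wtR=Gamma_i} as the mechanism, but Proposition~\ref{fibredprop} requires $q_f>0$ to produce the auxiliary fibration $\tilde h'$, whereas the hard case to exclude is $q_f=0$ with $\mathcal F\neq 0$; even when $q_f>0$, that proposition constrains the branch divisor $\wt R$, not the monodromy of the non-trivial unitary summand $\mathcal F$. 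Your proposed base change built from $B'\to\mathbb P^1$ has no evident reason to kill monodromy that does not already come from the $q_f$ trivial factors, so the ``technical core'' you defer is the entire content of the theorem.

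The paper's proof is organized quite differently and does not use Proposition~\ref{fibredprop} at all. Its Lemma~\ref{lemmalifting} is far weaker than what you propose: it is a one-line Ext vanishing showing that the sheaf inclusion $\mathcal F_u\hookrightarrow M=\ker\theta$ lifts to $\mathcal F_u\hookrightarrow f_*\Omega^1_S(\log\Upsilon)$, with no base change and no invariance claim. Pulling this back through the hyperelliptic double cover and projecting to the $(-1)$-eigenspace gives a map $\varrho:\tilde h^*\mathcal F_u\to \Omega^1_{\wt P}(\log(\Upsilon'+\wt R))\otimes\wt L^{-1}$. The key geometric step (Lemma~\ref{LR=0}) is that, after a preliminary base change making the fixed locus of $\sigma$ into $2g+2$ disjoint sections, the image of $\varrho$ is an invertible sheaf $\mathcal M$ with $\mathcal M$ nef, $\mathcal M^2=0$, and $\mathcal M\cdot D=0$ for every component $D\subset\wt R$; this is forced by Bogomolov's lemma applied to $\wt L\otimes\mathcal M\subset\Omega^1_{\wt P}(\log(\Upsilon'+\wt R))$. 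Restricting along one of the sections $D\subset\wt R$ then produces a degree-zero rank-one quotient of $\mathcal F_u$, which is a direct summand by poly-stability and torsion by Deligne. Splitting it off and inducting on $\rank\mathcal F_u$ finishes. The point is that the argument manufactures a rank-one piece at each step rather than trying to trivialize $\mathcal F_u$ in one blow; your proposal lacks any analogue of this induction or of the Bogomolov step that makes it work.
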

\begin{proof}[Proof of Theorem {\rm\ref{thmFtrivial}} based on Theorem {\rm\ref{thmFtrivial'}}]
Note that the bundle $\mathcal F^{1,0}$ comes from a unitary representation of the fundamental group
$$\rho:~\pi_1(B) \lra U(r), \qquad \text{~with~}r=\rank \mathcal F^{1,0}.$$
It follows that $\rank \mathcal F^{1,0}=q_{f}$ if and only if the representation $\rho$ is trivial.
Note that for any finite subset $\Sigma\subseteq B$, the restriction $\mathcal F^{1,0}\big|_{B\setminus\Sigma}$ corresponds to the unitary representation $\rho_{\Sigma}$:
$$
\xymatrix{
\pi_1\big(B\setminus\Sigma\big) \ar[rr]^-{\rho_\Sigma}\ar[dr]_-{i_*}&& U(r)\\
&\pi_1(B)\ar[ur]_-{\rho}&
}$$
By Theorem \ref{thmFtrivial'}, there exists a subset $\Sigma$ such that $\rho_\Sigma$ has finite image.
Because $\rho_\Sigma$ factors through $\pi_1(B)$ and $i_*$ is surjective, one gets that $\rho$ has also finite image.
It implies that $\mathcal F^{1,0}$ becomes trivial after a suitable finite \'etale base change, i.e.,
the equality \eqref{eqnFtrivial} holds after a suitable finite \'etale base change.
\end{proof}

By the above reduction, it suffices to prove Theorem \ref{thmFtrivial'}.
We first recall some general knowledge about the variation of Hodge structure for a fibration $f$ and prove a lifting property about $\mathcal F^{1,0}$ in Lemma \ref{lemmalifting}.

\vspace{0.2cm}
Let $\Upsilon\subset S$ be the singular fibers over the degeneration locus $\Delta\subset B$.
Consider the weight one variation of Hodge structures given by $R^1f_*\mathbb Z_{S\setminus \Upsilon}$.
Taking the graded sheaves, one obtains the (logarithmic) Higgs bundle (cf. \cite[\S\,4]{simpson})
$$(E,~\theta)=(E^{1,0}\oplus E^{0,1},~\theta)$$
with $E^{1,0}=f_*\Omega^1_{S/B}(\log \Upsilon)\cong f_*\omega_{S/B}$ and $E^{0,1}=R^1f_*\mathcal O_S$. The Higgs field $\theta$ is given by the edge morphism
$$f_*\Omega^1_{S/B}(\log \Upsilon) \lra R^1f_*\mathcal O_S \otimes \Omega_B^1(\log \Delta)$$
of the tautological sequence
\begin{equation}\label{tautologicalseq}
0\lra f^*\Omega^1_B(\log \Delta) \lra \Omega^1_S(\log \Upsilon)\lra \Omega^1_{S/B}(\log \Upsilon)\lra 0.
\end{equation}
By \cite{fujita78b,kollar87,cd14}, $E$ can be decomposed as a direct sum $\mathcal A\oplus \mathcal F$ of Higgs bundles with
$E^{1,0}\cap \mathcal A$ is ample and $\mathcal F$ flat, hence for $\mathcal A^{i,j}=E^{i,j}\cap \mathcal A$ and $\mathcal F^{i,j}=E^{i,j}\cap \mathcal F$, the Higgs bundle $E$ decomposes as
$$(E^{1,0}\oplus E^{0,1},~\theta)=\left(\mathcal A^{1,0}\oplus \mathcal A^{0,1},~\theta|_{\mathcal A^{1,0}}\right) \bigoplus \left(\mathcal F^{1,0}\oplus \mathcal F^{0,1},~0\right).$$
In particular, $\mathcal F^{1,0} \subseteq M :={\rm Ker\,}(\theta) \subseteq f_*\Omega^1_{S/B}(\log \Upsilon)$,
and we have the following exact sequence induced by \eqref{tautologicalseq}:
\begin{equation}\label{exactseq}
0\lra\Omega^1_B(\log \Delta) \lra f_*\Omega^1_S(\log \Upsilon)\overset{\varsigma}\lra M\lra 0.
\end{equation}
By \cite{fujita78}, one has a further decomposition of $\mathcal F^{1,0}$:
\begin{equation}\label{fujitadecomappendix}
f_*\Omega^1_{S/B}(\log \Upsilon) \cong f_*\omega_{S/B}=\mathcal A^{1,0}\oplus \mathcal F^{1,0}=\mathcal A^{1,0}\oplus \mathcal F_u \oplus \mathcal O_B^{\oplus q_f},
\end{equation}
with $\mathcal A^{1,0}$ ample, $\mathcal F_u$ unitary and $h^0(B, \mathcal F_u^{\vee})=h^1(B, \Omega^1_B(\mathcal F_u))=0$.
Note that $h^0(B, \mathcal F_u^{\vee})=0$ if and only if $h^0(B, \mathcal F_u)=0$, since $\mathcal F_u$ is unitary.

\renewcommand{\thetheorem}{\Alph{section}.\arabic{theorem}}
\begin{lemma}\label{lemmalifting}
Let $i:\,\mathcal F_u \hookrightarrow M$ be the induced inclusion. Then there exists an injective sheaf map $\iota:\,\mathcal F_u \hookrightarrow f_*\Omega^1_S(\log \Upsilon)$
such that $i=\varsigma\circ\iota$, i.e., we have the following commutative diagram:
$$
\xymatrix@M=0.15cm{
&&& \mathcal F_u \ar@{^(->}[d]^-{i}\ar@{_(->}[dl]_-{\exists\, \iota}&\\
0\ar[r] & \Omega^1_B(\log \Delta)\ar[r] & f_*\Omega^1_S(\log \Upsilon) \ar[r]^-{\varsigma} & M \ar[r] &0}$$
\end{lemma}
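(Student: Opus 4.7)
The plan is to reduce the existence of $\iota$ to the vanishing of a single obstruction class. Applying $\mathrm{Hom}(\mathcal{F}_u,-)$ to the short exact sequence \eqref{exactseq} produces the long exact sequence
$$
\cdots\to \mathrm{Hom}(\mathcal{F}_u,\, f_*\Omega^1_S(\log\Upsilon)) \xrightarrow{\varsigma_*} \mathrm{Hom}(\mathcal{F}_u, M) \xrightarrow{\delta} \mathrm{Ext}^1\bigl(\mathcal{F}_u,\,\Omega^1_B(\log\Delta)\bigr) \to\cdots,
$$
and a sheaf-theoretic lift $\iota$ with $\varsigma\circ\iota=i$ exists precisely when $\delta(i)=0$. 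I will actually establish the stronger statement that the whole target $\mathrm{Ext}^1$ vanishes, so that every sheaf map $\mathcal{F}_u\to M$ lifts.

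Since $\mathcal{F}_u$ is locally free, this obstruction group is $H^1(B,\mathcal{F}_u^\vee\otimes \Omega^1_B(\log\Delta))$. Using $\omega_B=\Omega^1_B$ and $\Omega^1_B(\log\Delta)^\vee\cong T_B(-\Delta)$, Serre duality on $B$ rewrites it as the dual of
$$
H^0\bigl(B,\,\mathcal{F}_u\otimes T_B(-\Delta)\otimes \Omega^1_B\bigr)\;=\;H^0\bigl(B,\,\mathcal{F}_u\otimes \mathcal{O}_B(-\Delta)\bigr).
$$
The inclusion $\mathcal{O}_B(-\Delta)\hookrightarrow \mathcal{O}_B$ embeds this into $H^0(B,\mathcal{F}_u)$, which is zero by the refined Fujita decomposition \eqref{fujitadecomappendix} recalled in the excerpt: there one has $h^0(B,\mathcal{F}_u^\vee)=0$, and this equals $h^0(B,\mathcal{F}_u)$ because $\mathcal{F}_u$ is unitary. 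Hence the obstruction vanishes and the desired lift $\iota$ exists as a sheaf map.

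Injectivity of $\iota$ comes for free from $\varsigma\circ\iota=i$ and the injectivity of $i$: any local section in $\ker\iota$ would lie in $\ker i=0$. I do not foresee a serious obstacle; the only point requiring care is the Serre-duality bookkeeping, to ensure that the twist by $\mathcal{O}_B(-\Delta)$ emerges correctly so that the specific vanishing property $h^0(B,\mathcal{F}_u)=0$ of the unitary summand, rather than merely $\deg\mathcal{F}_u=0$, can be brought to bear. Note that the argument is uniform in $\Delta$, covering also the smooth case $\Delta=\emptyset$, and that it uses neither the hyperelliptic nor the semi-stable hypothesis on $f$.
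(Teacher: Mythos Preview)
Your proof is correct and essentially identical to the paper's: both show the entire obstruction group $\mathrm{Ext}^1(\mathcal F_u,\Omega^1_B(\log\Delta))\cong H^1(B,\mathcal F_u^{\vee}\otimes\Omega^1_B(\log\Delta))$ vanishes via Serre duality and the inclusion $H^0(B,\mathcal F_u(-\Delta))\hookrightarrow H^0(B,\mathcal F_u)=0$, then deduce injectivity of $\iota$ from that of $i$. The only cosmetic difference is that the paper phrases the obstruction as the image of the extension class $\alpha\in\mathrm{Ext}^1(M,\Omega^1_B(\log\Delta))$ under $i^{\vee}$, whereas you use the connecting map $\delta$ from $\mathrm{Hom}(\mathcal F_u,-)$; these are the same element.
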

\begin{proof}
The exact sequence \eqref{exactseq} gives an element $\alpha$ in
${\rm Ext}^1(M, \Omega^1_B(\log \Delta))$.
The existence of $\iota$ such that $i=\varsigma\circ\iota$ is equivalent to
the vanishing of $\alpha$ under the natural map
$${\rm Ext}^1(M, \Omega^1_B(\log \Delta))
\overset{i^{\vee}} \lra {\rm Ext}^1(\mathcal F_u, \Omega^1_B(\log \Delta)).$$
Note that
\begin{eqnarray*}
{\rm Ext}^1(\mathcal F_u, \Omega^1_B(\log \Delta))
&\cong&{\rm Ext}^1(\mathcal O_B, \mathcal F_u^{\vee}\otimes\Omega^1_B(\log \Delta))\\
&\cong&H^1(B, \mathcal F_u^{\vee}\otimes\Omega^1_B(\log \Delta))\\
&\cong&H^0(B, \mathcal F_u(-\Delta))^{\vee}=0.
\end{eqnarray*}
The last equality is due to the fact that $h^0(B, \mathcal F_u)=0$ and $\Delta$ is effective.
Therefore, we obtain a map $\iota$ such that $i=\varsigma\circ\iota$. The injectivity of $\iota$ follows from that of $i$.
\end{proof}
\begin{remark}
The above lemma is weaker than \cite[Corollary\,7.2]{luzuo14},
in which it is proven based on \cite[Lemma\,7.1]{luzuo14} that there exists a lifting $\tilde\iota:\,\mathcal F^{1,0} \hookrightarrow f_*\Omega^1_S(\log \Upsilon)$
for the inclusion $\mathcal F^{1,0} \hookrightarrow M$ and the image of $\tilde\iota$ is actually contained in $f_*\Omega^1_S\subseteq f_*\Omega^1_S(\log \Upsilon)$.
\end{remark}

Now we restrict ourselves to the situation that $f:S\to B$ is a semi-stable hyperelliptic fibration.
Let $\sigma$ be the hyperelliptic involution of $f$, $\vartheta:\, \wt S \to S$ the composition of all the blowing-ups of isolated fixed points of $\sigma$,
and $\tilde\pi:\,\wt S \to \wt P$ the induced double cover with the following diagram:
$$\xymatrix{
&\wt S \ar[dl]_-{\vartheta}\ar[dr]^-{\tilde \pi}\ar[dd]^-{\tilde f}& \\
S \ar[dr]_-{f}&& \wt P \ar[dl]^-{\tilde h}\\
&B&
}
$$

\begin{lemma}\label{lemmacontained}
Let $\wt \Upsilon$ be the strict inverse image of $\Upsilon$ in $\wt S$. Then
$$\vartheta^*\left(\Omega^1_S(\log \Upsilon)\right) \subseteq \Omega^1_{\wt S}(\log \wt\Upsilon).$$
\end{lemma}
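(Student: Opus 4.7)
The plan is to reduce the inclusion to a local computation by induction on the number of blowups composing $\vartheta$, so that it suffices to treat the case of a single blowup $\vartheta\colon \wt S \to S$ at an isolated fixed point $p$ of the hyperelliptic involution $\sigma$.

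The key geometric observation is that any isolated fixed point of $\sigma$ must lie at a node of a singular fiber of $f$. Indeed, at such a $p$ one has $d\sigma_p = -\mathrm{id}$ on $T_pS$ (an involution with $p$ as an isolated fixed point has no $+1$ eigenvalue), and then the identity $f\circ\sigma = f$ gives $df_p = -df_p$, hence $df_p = 0$. So $p$ is a critical point of $f$, and since $f$ is semi-stable this forces $p$ to be a node of $\Upsilon$. In particular $p\in\Upsilon$ and $\Upsilon$ has normal crossings at $p$.

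For the local computation, pick coordinates $(x,y)$ on $S$ centered at $p$ so that $\Upsilon = \{xy=0\}$ locally; then $\Omega^1_S(\log\Upsilon)$ is generated by $dx/x$ and $dy/y$. On the blowup chart where $x=u$, $y=uv$, the exceptional curve is $E = \{u=0\}$ and the strict transform of $\{y=0\}$ is $\{v=0\}$. Since the blowup center $p$ lies on $\Upsilon$, the reduced inverse image $\vartheta^{-1}(\Upsilon)$ picks up $E$ as a component and equals $\{uv=0\}$ locally; hence $\wt\Upsilon$ is a normal crossing divisor on $\wt S$ with $\Omega^1_{\wt S}(\log\wt\Upsilon)$ locally generated by $du/u$ and $dv/v$. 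The computation
$$
\vartheta^*(dx/x) = du/u,\qquad \vartheta^*(dy/y) = du/u + dv/v
$$
places both pulled-back generators in $\Omega^1_{\wt S}(\log\wt\Upsilon)$. The other chart $(s,w)$ with $y=s$ and $x=sw$ is entirely symmetric, and the inclusion is verified.

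The only genuinely delicate point is the correct interpretation of $\wt\Upsilon$: since every blowup center lies on $\Upsilon$, the reduced inverse image necessarily contains each exceptional curve, and this inclusion of $E$ in $\wt\Upsilon$ is essential, because the pulled-back log differential $\vartheta^*(dx/x) = du/u$ carries an honest log pole along $E$. After this one observation is in place, the lemma reduces to the two-line local calculation above.
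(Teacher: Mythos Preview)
Your argument is correct and in fact handles the situation more carefully than the paper's one-line proof. The paper simply writes
\[
\vartheta^*\big(\Omega^1_S(\log \Upsilon)\big)=\Omega^1_{\wt S}\big(\log (\wt\Upsilon+E)\big)(-E)\subseteq \Omega^1_{\wt S}(\log \wt\Upsilon),
\]
which is the standard identity for the pullback of log differentials under a single blowup at a point \emph{not lying on} the log divisor (with $\wt\Upsilon$ read as the strict transform, so that adding $E$ is meaningful). But, as you observe, an isolated fixed point $p$ of $\sigma$ has $d\sigma_p=-\mathrm{id}$, whence $df_p=0$, and by semi-stability $p$ is a node of $\Upsilon$. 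At such a node the displayed equality fails already on first Chern classes: the left side has $c_1=\vartheta^*(K_S+\Upsilon)=K_{\wt S}+\wt\Upsilon+E$ (since $\vartheta^*\Upsilon=\wt\Upsilon+2E$), while the right side has $c_1=K_{\wt S}+\wt\Upsilon-E$.

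Your explicit coordinate computation at a node, with $\wt\Upsilon$ taken to be the reduced total inverse image (so that $E\subseteq\wt\Upsilon$), is exactly what makes the inclusion true; under the strict-transform reading, $\vartheta^*(dx/x)=du/u$ acquires a log pole along $E\not\subseteq\wt\Upsilon$ and the inclusion would fail. So your approach differs from the paper's by replacing an appeal to a general formula that does not apply here with the correct local verification, and by making explicit the only interpretation of $\wt\Upsilon$ under which the lemma actually holds.
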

\begin{proof}
It is enough to consider the case when $\vartheta:\,\wt S \to S$ consists of exactly one blowing-up.
Let $E$ be the exceptional curve.
Then \[\vartheta^*\left(\Omega^1_S(\log \Upsilon)\right)=\Omega^1_{\wt S}\big(\log (\wt\Upsilon+E)\big)(-E)
\subseteq \Omega^1_{\wt S}(\log \wt\Upsilon). \qedhere\]
\end{proof}

The lifting $\iota$ obtained in Lemma \ref{lemmalifting} gives rise to a morphism
$$f^*{\mathcal F_u} \hookrightarrow f^*f_*\Omega^1_S(\log \Upsilon) \lra \Omega^1_S(\log \Upsilon).$$
Hence by Lemma \ref{lemmacontained}, one obtains a morphism
$$\tilde f^*{\mathcal F_u} =\vartheta^*f^*{\mathcal F_u}  \lra \Omega^1_{\wt S}(\log \wt \Upsilon). $$
Equivalently, we obtain an element
$\eta\in H^0\big(\wt S,\, \Omega^1_{\wt S}(\log \wt\Upsilon)\otimes \tilde f^*\mathcal F_u^{\vee}\big),$
hence also an element
$$\tilde\pi_*\eta\in H^0\Big(\wt P, \tilde\pi_*\big(\Omega^1_{\wt S}(\log \wt\Upsilon)\otimes \tilde f^*\mathcal F_u^{\vee}\big)\Big)
\cong H^0\Big(\wt P, \tilde\pi_*\big(\Omega^1_{\wt S}(\log \wt\Upsilon)\big)\otimes  \tilde h^*\mathcal F_u^{\vee}\Big).$$
So one gets a sheaf map
$$\tilde h^*\mathcal F_u \lra \tilde\pi_*\left(\Omega^1_{\wt S}(\log \wt\Upsilon)\right).$$
The Galois group ${\rm Gal}(\wt S/\wt P)\cong \mathbb Z_2$ acts on $\tilde\pi_*\left(\Omega^1_{\wt S}(\log \wt\Upsilon)\right).$
One therefore obtains the eigenspace decomposition
$$\tilde h^*(\mathcal F_u)\lra \tilde\pi_*\left(\Omega^1_{\wt S}(\log \wt\Upsilon)\right)_1,
\qquad  \tilde h^*(\mathcal F_u)\lra \tilde\pi_*\left(\Omega^1_{\wt S}(\log \wt\Upsilon)\right)_{-1}.$$

\begin{lemma}\label{LR=0}
Assume that the fixed locus of the hyperelliptic involution $\sigma$ consists of $2g+2$ disjoint sections and possibly some isolated points.
Then the image of the map
$$\varrho:~ \tilde h^*(\mathcal F_u)\lra \tilde\pi_*\left(\Omega^1_{\wt S}(\log \wt\Upsilon)\right)_{-1}$$
is an invertible subsheaf $\mathcal M$ such that $\mathcal M$ is nef and $\mathcal M^2=0$.
Let $D$ be any component of the branch divisor $\wt R \subseteq \wt P$ of the double cover $\tilde \pi: \wt S\to \wt P$. Then $\mathcal M\cdot D=0$.
\end{lemma}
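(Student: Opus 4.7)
The plan is to show that $\mathcal M$ (or rather its saturation inside $\tilde\pi_*(\Omega^1_{\wt S}(\log \wt\Upsilon))_{-1}$) factors through a line bundle pulled back from $\mathbb P^1$ via the auxiliary ruling $\tilde h'\colon \wt P \to \mathbb P^1$ coming from the fibred structure of Proposition \ref{fibredprop}, and then to read off all four assertions from this description together with Lemma \ref{wtR=Gamma_i}.

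Unfolding the construction: the lift $\iota\colon \mathcal F_u \hookrightarrow f_*\Omega^1_S(\log \Upsilon)$ of Lemma \ref{lemmalifting}, together with adjunction, pullback along $\vartheta$ and Lemma \ref{lemmacontained}, produces a morphism $\tilde f^*\mathcal F_u \to \Omega^1_{\wt S}(\log \wt\Upsilon)$; then $\varrho$ is obtained by pushing forward by $\tilde\pi$ and projecting onto the $(-1)$-eigenspace for $\tilde\sigma$. The central technical step is to show that the resulting anti-invariant $1$-forms on $\wt S$ actually descend through the second fibration $\tilde f'\colon \wt S \to B'$, i.e. they are locally of the form $(\tilde f')^*\omega$ for a $\sigma'$-anti-invariant section $\omega$ of $\Omega^1_{B'}$ with log poles on the preimage of $\Delta$. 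The key input here is that sections of $\mathcal F_u$ lie in the Higgs kernel $M=\ker\theta$, which, combined with the hypothesis on the fixed locus of $\sigma$ and the commutative diagram of Figure \ref{bimapdiagram}, forces the lifts to be pulled back from $B'$. Once this is done, $\tilde h'\circ\tilde\pi = \pi'\circ\tilde f'$ together with the standard formula for the anti-invariant part of $\pi'_*\Omega^1_{B'}(\log)$ shows that $\mathcal M$ is a subsheaf of $(\tilde h')^*N$ for a line bundle $N$ on $\mathbb P^1$ (essentially $\mathcal O_{\mathbb P^1}(q_f-1)$ up to twist).

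From this containment the four properties follow in a routine way. The image $\mathcal M$ is a quotient of $\tilde h^*\mathcal F_u$, which is the pullback of a unitary (hence numerically flat, degree-zero semistable) bundle from $B$, so every quotient has nonnegative degree on every curve; this gives that $\mathcal M$ is nef and, after passing to its saturation inside $(\tilde h')^*N$, invertible. Since $\mathcal F_u$ is trivial on every fiber $\tilde\Gamma$ of $\tilde h$, the restriction $\mathcal M|_{\tilde\Gamma}$ is a trivial quotient and $\mathcal M\cdot\tilde\Gamma = 0$; combined with $\mathcal M\subseteq (\tilde h')^*N$, this forces the numerical class of $\mathcal M$ to be a non-negative multiple of the class of a fiber of $\tilde h'$, whence $\mathcal M^2 = 0$. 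Finally, by Lemma \ref{wtR=Gamma_i} the branch divisor $\wt R$ is contained in the fibers of $\tilde h'$ over $\Delta$, so every component $D\subseteq \wt R$ sits inside a single fiber of $\tilde h'$; since the numerical class of $\mathcal M$ is proportional to a fiber class of $\tilde h'$, we conclude $\mathcal M \cdot D = 0$.

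The main obstacle will be the descent step: rigorously establishing that the $(-1)$-eigenspace lifts of sections of $\mathcal F_u$ truly come from $(\tilde f')^*\Omega^1_{B'}(\log)$, rather than merely sitting in the a~priori much larger anti-invariant part of $\Omega^1_{\wt S}(\log\wt\Upsilon)$. This requires exploiting that $\mathcal F_u \subseteq \ker\theta$ so that the lifts are essentially ``flat'' along fibers of $\tilde f$, the hypothesis on the fixed locus of $\sigma$ to control how the lifts interact with the ramification divisor $\wt R$, and the compatibility of $\tilde\sigma$ with $\sigma'$ through $\tilde f'$ to identify the anti-invariant part with pullbacks from $B'$.
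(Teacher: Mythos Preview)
Your approach has a genuine gap that makes it circular. You invoke the second fibration $\tilde h'\colon \wt P\to\bbp^1$ and the curve $B'$ coming from Proposition~\ref{fibredprop}, but that proposition is stated (and proved) under the standing hypothesis $q_f>0$ of Section~\ref{sectionpositive}. Lemma~\ref{LR=0} carries no such hypothesis, and it cannot: it is the key input in the proof of Theorem~\ref{thmFtrivial'}, whose whole purpose is to show that the non-trivial unitary summand $\mathcal F_u$ becomes trivial after base change, thereby \emph{increasing} $q_f$. In the crucial application (Corollary~\ref{corollarypfofconjectuexiao}) one starts precisely from $\lambda_f<4$, hence $q_f=0$, with $\mathcal F_u$ possibly nonzero. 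In that situation there is no $B'$, no $\tilde f'$, no $\tilde h'$, and Lemma~\ref{wtR=Gamma_i} is unavailable; your entire argument collapses.

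Even granting $q_f>0$, the ``descent'' step is not justified. You assert that the $(-1)$-eigenspace lifts of sections of $\mathcal F_u$ are pulled back from $B'$ via $\tilde f'$, citing $\mathcal F_u\subseteq\ker\theta$. But $\ker\theta$ controls behaviour along the fibers of $\tilde f$, not of $\tilde f'$; these are transversal fibrations, and there is no a~priori reason a form flat for the first should vanish on the relative tangent sheaf of the second. The space of $1$-forms pulled back from $B'$ has rank $g(B')=q_f$, whereas $\ker\theta$ can be strictly larger, so the inclusion you need is a genuine constraint that your sketch does not establish.

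The paper sidesteps both issues by arguing purely on $\wt P$ via Bogomolov's lemma: a rank-one subsheaf of $\Omega^1_{\wt P}(\log(\Upsilon'+\wt R))$ has Kodaira dimension at most~$1$. One first shows $\varrho\neq0$ and $\rank(\mathrm{im}\,\varrho)=1$ (the latter by wedging and noting $\omega_{\wt P}(\Upsilon')\cdot\wt\Gamma=-2$), then uses semi-positivity of quotients of $\tilde h^*\mathcal F_u$ together with Bogomolov to force $\mathcal M$ invertible with $\mathcal M^2=0$; the same Bogomolov argument applied to $\wt L\otimes\mathcal M$ gives $\mathcal M\cdot D=0$ for $D\subseteq\wt R$. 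No hypothesis on $q_f$ is needed anywhere.
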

\begin{proof}
First of all, we show that $\varrho\neq 0$.
By assumption, the strict inverse image $\tilde\pi^{-1}\big(\wt R\big)$,
which is the fixed locus of the hyperelliptic involution $\tilde\sigma$ on $\wt S$, consists of $2g+2$ disjoint sections and the exceptional curves of $\vartheta$.
Hence it is easy to see that $\wt \Upsilon$ intersects $\pi^{-1}\big(\wt R\big)$ transversely.
Let $\Upsilon'\subseteq \wt P$ be the image of $\wt \Upsilon$, and $\wt L\in \Pic(\wt P)$ such that $\wt L^{\otimes2}\cong \mathcal O_{\wt P}(\wt R)$ defines the double cover $\tilde\pi$.
Then
\begin{equation}\label{pi_*fenjie}
\left\{\begin{aligned}
\tilde\pi_*\left(\Omega^1_{\wt S}(\log \wt\Upsilon)\right)_1&~=~\Omega_{\wt P}^1(\log \Upsilon'),\\
\tilde\pi_*\left(\Omega^1_{\wt S}(\log \wt\Upsilon)\right)_{-1}&~=~\Omega_{\wt P}^1\left(\log (\Upsilon'+\wt R)\right)\otimes \wt L^{-1},
\end{aligned}\right.
\end{equation}
Note that $\tilde h_*\left(\Omega_{\wt P}^1(\log \Upsilon')\right)=0$, since $h^0\left(\wt \Gamma,\Omega_{\wt P}^1(\log \Upsilon')\big|_{\wt \Gamma}\right)=0$ for any general fiber $\wt \Gamma$ of $\tilde h$.
Note also that the induced map
$$\begin{aligned}
\mathcal F_u=\tilde  h_* \tilde h^*\mathcal F_u =\tilde f_*\tilde f^* \mathcal F_u\lra &~\tilde f_*\Omega^1_{\wt S}(\log \wt\Upsilon)=\tilde h_*\left(\tilde\pi_*\big(\Omega^1_{\wt S}(\log \wt\Upsilon)\big)\right)\\
&\hspace{-0.2cm}=\tilde h_*\left(\Omega_{\wt P}^1(\log \Upsilon')\right) \oplus \tilde h_*\bigg(\Omega_{\wt P}^1\left(\log (\Upsilon'+\wt R)\right)\otimes \wt L^{-1}\bigg)
\end{aligned}$$
is the composition of the inclusions $\mathcal F_u \overset{\iota}\hookrightarrow f_*\Omega^1_{S}(\log \Upsilon)
\hookrightarrow \tilde f_*\Omega^1_{\wt S}(\log \wt\Upsilon)$, where the later inclusion follows from Lemma \ref{lemmacontained}.
On the other hand, the second component of the above map is actually given by $\tilde h_*\varrho$.
Hence in particular, $\varrho\neq0$.

Second, we prove that the image of $\varrho$ is a subsheaf of rank one. Otherwise, it is of rank two, and so the second wedge product
$$\wedge^2 \tilde  h^* \mathcal F_u\overset{\wedge^2\varrho}\lra \wedge^2\left(\tilde\pi_*\left(\Omega^1_{\wt S}(\log \wt\Upsilon)\right)_{-1}\right)=\omega_{\wt P}(\Upsilon')$$
is a non-zero map. On the other hand, the image $\mathcal C$ of this map
is a quotient sheaf of  $\wedge^2\tilde h^*\mathcal F_u$ coming from a unitary representation.
Note that for any morphism $\alpha: C \to \wt P$ from a smooth complete curve $C$, $\alpha^*\big(\wedge^2\tilde h^*\mathcal F_u\big)$
is poly-stable of slope zero since it comes from a unitary representation (cf. \cite{ns65}), which implies that $\wedge^2\tilde h^*\mathcal F_u$ is semi-positive.
Therefore, as a quotient of $\wedge^2\tilde h^*\mathcal F_u$, $\mathcal C$ is also semi-positive.
Here we recall that a locally free sheaf $\mathcal E$ on $\wt P$ is called semi-positive, if for any morphism $\alpha: C \to \wt P$ from a smooth complete curve $C$,
the pulling-back $\alpha^*\mathcal E$ has no quotient line bundle of negative degree.
Since $\omega_{\wt P}(\Upsilon')\cdot \wt \Gamma=-2$ for any general fiber $\wt \Gamma$ of $\tilde h$, $\omega_{\wt P}(\Upsilon')$ can not contain any non-zero semi-positive subsheaf.
So the image of $\varrho$ is a rank one subsheaf $\mathcal M\otimes I_{Z}$,
where $\mathcal M$ is an invertible subsheaf and $\dim Z=0$.

We claim that $Z=\emptyset$, and hence the image of $\varrho$ is an invertible subsheaf.
Indeed, if $Z\neq \emptyset$, by a suitable blowing-up $\psi:\,X\to \wt P$, we may assume that the image $\psi^*\varrho\left(\psi^* \tilde h^*(\mathcal F_u)\right)$ is $\psi^*(\mathcal M)\otimes (-E)$,
where $E(\neq 0)$ is a suitable combination of the exceptional curves of $\psi$.
As $\mathcal F_u$ comes from a unitary representation, we get $\psi^*(\mathcal M)\otimes (-E)$ is semi-positive and hence
$$0\leq (\psi^*(\mathcal M)-E)^2=\mathcal M^2+E^2.$$
So $\mathcal M^2 \geq -E^2>0$.
On the other hand, the invertible sheaf $\mathcal M$ is also a quotient sheaf of $\tilde h^* \mathcal F_u$, which comes from a unitary representation,
hence it is semi-positive. This implies that the Kodaira dimension of $\mathcal M$ is $2$.
By \eqref{pi_*fenjie}, we get the following inclusion of sheaves,
\begin{equation*}
\wt L\otimes \mathcal M\subseteq \Omega_{\wt P}^1\left(\log (\Upsilon'+\wt R)\right).
\end{equation*}
As $2\wt L\equiv \wt R$ is effective,
the Kodaira dimension of $\wt L\otimes \mathcal M$ is also $2$, which is impossible by Bogomolov lemma (cf. \cite[Lemma\,7.5]{sakai80}).
Hence the image of $\varrho$ is an invertible subsheaf $\mathcal M$,
which is semi-positive since it is a quotient sheaf of a vector bundle coming from a unitary representation.

Finally, one can prove similarly that $\mathcal M\cdot D=0$ for any component $D\in\wt R$;
otherwise, $\wt L\otimes \mathcal M$ will be of Kodaira dimension two, which is impossible by Bogomolov lemma (cf. \cite[Lemma\,7.5]{sakai80}).
The proof is complete.
\end{proof}

Now we prove Theorem \ref{thmFtrivial'} and hence complete the proof of Theorem \ref{thmFtrivial}.
\begin{proof}[Proof of Theorem {\rm \ref{thmFtrivial'}}]
After a suitable base change, we may assume that
the fixed locus of the hyperelliptic involution $\sigma$ consists of $2g+2$ disjoint sections and possibly some isolated points;
actually, one achieves this by base change unbranched over $B\setminus\Delta$ as $f$ is semi-stable.
To prove our theorem, it suffices to show that $\mathcal F_u$ in \eqref{fujitadecomappendix} becomes trivial after a suitable base change.
We prove this by induction on $r=\rank \mathcal F_u$.

If $r=1$, then by \cite[\S\,4.2]{deligne71} or \cite[Theorem 3.4]{bar98},
we get that $\mathcal F_u$ is torsion in ${\rm Pic}^0(B)$.
So after a suitable finite \'etale base change, $\mathcal F_u$ will be trivial as required.

Now we assume that $r>1$.
If $\mathcal F_u=\mathcal F_u' \oplus \mathfrak L$ with $\rank \mathcal F_u'=r-1$ and $\rank\mathfrak L=1$,
then again by \cite[\S\,4.2]{deligne71} or \cite[Theorem 3.4]{bar98}, one gets that the pull-back of $\mathfrak L$ becomes trivial after a suitable finite \'etale base change;
hence our theorem follows from the induction.
Therefore, it is enough to prove that $\mathcal F_u$ contains a direct summand $\mathfrak L$ of rank one.

Let $s:\, B \to \wt P$ be a section of $\tilde h$ contained in $\wt R$, and $D=s(B)$, $j:\, D \hookrightarrow \wt P$ the inclusion.
By applying Lemma \ref{LR=0}, one obtains that
$\mathcal M\cdot D=0$ with $\mathcal M=\varrho\left(\tilde h^*\mathcal F_u\right)$, i.e., $\deg\mathcal O_D(\mathcal M)=0$.
Note that $s^*j^*\tilde h^*\mathcal F_u \cong \mathcal F_u$ and $s^*j^* \mathcal M = s^* \mathcal O_D(\mathcal M)$, where we view $s$ as a morphism from $B$ to $D$.
Hence we may view $\mathfrak L:=s^*\mathcal O_D(\mathcal M)$ as an invertible
sheaf on $B$, which is a quotient of $\mathcal F_u$ since $\mathcal M$ is a quotient of $\tilde h^*\mathcal F_u$.
Moreover, $\deg \mathfrak L = \deg \mathcal O_D(\mathcal M)=0$ since $s$ is an isomorphism.
As $\mathcal F_u$ comes from a unitary representation, $\mathcal F_u$ is poly-stable (cf. \cite{ns65}).
Thus $\mathcal F_u=\mathfrak L \oplus \mathcal F_u'$ contains a direct summand $\mathfrak L$ of rank one as required.
This completes the proof.
\end{proof}

\phantomsection
\addcontentsline{toc}{section}{Acknowledgements}
\noindent{\bf Acknowledgements.}
We would like to thank Hao Sun and Shengli Tan for
providing valuable discussions about Conjectures \ref{conjofxiao} and \ref{conjecturebs}.
We would also like to thank Xiaolei Liu, Jun Lu and Jinxing Xu for their interest.
Special thanks goes to Catanese for drawing our attention to \cite{cd14} and \cite{fujita78b}.
We are grateful to the anonymous referees.
Their valuable comments improved our proof and our writing significantly, which makes our paper more readable.

\enddocument